\renewcommand*{\backref}[1]{\ifx#1\relax \else Page #1 \fi}
\renewcommand*{\backrefalt}[4]{%
    \ifcase #1 \footnotesize{(Not cited.)}%
    \or        \footnotesize{(Cited on page~#2.)}%
    \else      \footnotesize{(Cited on pages~#2.)}%
    \fi}
\renewcommand\tableofcontents{%
    \@starttoc{toc}%
}
\long\def\comment#1{}
\definecolor{battleshipgrey}{rgb}{0.52, 0.52, 0.51}
\definecolor{darkgray}{rgb}{0.66, 0.66, 0.66}
\definecolor{darkgreen}{rgb}{0.0, 0.2, 0.13}
\definecolor{darkspringgreen}{rgb}{0.09, 0.45, 0.27}
\definecolor{dukeblue}{rgb}{0.0, 0.0, 0.61}
\definecolor{olivedrab7}{rgb}{0.24, 0.2, 0.12}
\definecolor{darkblue}{rgb}{0.0, 0.0, 0.55}
\definecolor{darkscarlet}{rgb}{0.34, 0.01, 0.1}
\definecolor{candyapplered}{rgb}{1.0, 0.03, 0.0}
\definecolor{ao(english)}{rgb}{0.0, 0.5, 0.0}
\definecolor{applegreen}{rgb}{0.55, 0.71, 0.0}
\let\save@mathaccent\mathaccent
\newcommand*\if@single[3]{%
  \setbox0\hbox{${\mathaccent"0362{#1}}^H$}%
  \setbox2\hbox{${\mathaccent"0362{\kern0pt#1}}^H$}%
  \ifdim\ht0=\ht2 #3\else #2\fi
  }
\newcommand*\rel@kern[1]{\kern#1\dimexpr\macc@kerna}
\newcommand*\widebar[1]{\@ifnextchar^{{\wide@bar{#1}{0}}}{\wide@bar{#1}{1}}}
\newcommand*\wide@bar[2]{\if@single{#1}{\wide@bar@{#1}{#2}{1}}{\wide@bar@{#1}{#2}{2}}}
\newcommand*\wide@bar@[3]{%
  \begingroup
  \def\mathaccent##1##2{%
%Enable nesting of accents:
    \let\mathaccent\save@mathaccent
%If there's more than a single symbol, use the first character instead (see below):
    \if#32 \let\macc@nucleus\first@char \fi
%Determine the italic correction:
    \setbox\z@\hbox{$\macc@style{\macc@nucleus}_{}$}%
    \setbox\tw@\hbox{$\macc@style{\macc@nucleus}{}_{}$}%
    \dimen@\wd\tw@
    \advance\dimen@-\wd\z@
%Now \dimen@ is the italic correction of the symbol.
    \divide\dimen@ 3
    \@tempdima\wd\tw@
    \advance\@tempdima-\scriptspace
%Now \@tempdima is the width of the symbol.
    \divide\@tempdima 10
    \advance\dimen@-\@tempdima
%Now \dimen@ = (italic correction / 3) - (Breite / 10)
    \ifdim\dimen@>\z@ \dimen@0pt\fi
%The bar will be shortened in the case \dimen@<0 !
    \rel@kern{0.6}\kern-\dimen@
    \if#31
      \overline{\rel@kern{-0.6}\kern\dimen@\macc@nucleus\rel@kern{0.4}\kern\dimen@}%
      \advance\dimen@0.4\dimexpr\macc@kerna
%Place the combined final kern (-\dimen@) if it is >0 or if a superscript follows:
      \let\final@kern#2%
      \ifdim\dimen@<\z@ \let\final@kern1\fi
      \if\final@kern1 \kern-\dimen@\fi
    \else
      \overline{\rel@kern{-0.6}\kern\dimen@#1}%
    \fi
  }%
  \macc@depth\@ne
  \let\math@bgroup\@empty \let\math@egroup\macc@set@skewchar
  \mathsurround\z@ \frozen@everymath{\mathgroup\macc@group\relax}%
  \macc@set@skewchar\relax
  \let\mathaccentV\macc@nested@a
%The following initialises \macc@kerna and calls \mathaccent:
  \if#31
    \macc@nested@a\relax111{#1}%
  \else
%If the argument consists of more than one symbol, and if the first token is
%a letter, use that letter for the computations:
    \def\gobble@till@marker##1\endmarker{}%
    \futurelet\first@char\gobble@till@marker#1\endmarker
    \ifcat\noexpand\first@char A\else
      \def\first@char{}%
    \fi
    \macc@nested@a\relax111{\first@char}%
  \fi
  \endgroup
}
\begin{document}

\begin{center} {\LARGE{\bf{Refined Convergence Rates for Maximum Likelihood \\[0.08in] Estimation under Finite Mixture Models}}}

\vspace*{.2in}
 {\large{
 \begin{tabular}{cc}
 Tudor Manole$^{\diamond}$ &  Nhat Ho$^{\ddagger}$ \\
 \end{tabular}
}}

\vspace*{.1in}

\begin{tabular}{c}
Department of Statistics and Data Science, Carnegie Mellon University$^{\diamond}$ \\ Department of Statistics and Data Sciences, University of Texas, Austin$^{\ddagger}$
\end{tabular}

\today

\vspace*{.2in}

\end{center}

\begin{abstract}
     We revisit the classical problem of deriving convergence rates for the maximum likelihood estimator (MLE) in finite mixture models. The Wasserstein distance has become a standard loss function for the analysis of parameter estimation in these models, due in part to its ability to circumvent label switching and to accurately characterize the behaviour of fitted mixture components with vanishing weights. However, the Wasserstein distance is only able to capture the worst-case convergence rate among the remaining fitted mixture components. We demonstrate that when the log-likelihood function is penalized to discourage vanishing mixing weights, stronger loss functions can be derived to resolve this shortcoming of the Wasserstein distance. These new loss functions accurately capture the heterogeneity in convergence rates of fitted mixture components, and we use them to sharpen existing pointwise and uniform convergence rates in various classes of mixture models. In particular,  these results imply that a subset of the components of the penalized MLE typically converge significantly faster than could have been anticipated from past work. We further show that some of these conclusions extend to the traditional MLE. Our theoretical findings are supported by a simulation study to illustrate these improved convergence rates. 
\end{abstract}

\section{Introduction}
Finite mixture models form a celebrated tool for modelling heterogeneous data, and are used pervasively in the life and 
physical sciences~\citep{bechtel1993, kuusela2012,
mclachlan2004finite}.   
The primary goal in many such applications  
is to perform statistical inference for the mixture parameters.
This raises the classical question of characterizing the optimal convergence rates
for parameter estimation in finite mixture models. Though this topic has been the subject of considerable investigation in past literature, 
the aim of our work is to show how these existing results may be refined through a careful choice of the loss function
used in their analyses.

Mixture distributions do not enjoy the standard regularity
conditions that are typically presumed in parametric models, such
as non-degeneracy of the Fisher information. 
As a result, optimal rates of estimation in mixtures are strictly slower than the usual parametric rate of convergence. This observation 
dates back at least to the seminal work of~\citet{chen1995}, who analyzed univariate mixtures
satisfying a regularity condition known as strong identifiability, which we formally define in Section~\ref{sec:background} below. 
A long line of recent work has further analyzed convergence rates in mixtures of general dimension, under  varying degrees of strong identifiability. 
In particular, \citet{nguyen2013} proposed the Wasserstein distance as a natural tool for metrizing convergence of parameters in finite mixtures, via
their mixing measure. The Wasserstein metric was then used 
to analyze convergence rates for the maximum likelihood estimator (MLE) and related procedures,
 under various classes of finite mixture models~\citep{ho2016EJS, ho2016convergence,heinrich2018, Ho_Nguyen_SIMODS}. 
Moment-based estimators were also studied by~\citet{wu2020a,doss2020}, and Bayesian estimators by~\citet{ohn2020,guha2021Bernoulli}, to name a few. 

A broad conclusion of these works is that slow convergence rates are pervasive to {\it parameter estimation}
in finite mixture models. This observation contrasts the fact that the minimax rate of estimating the {\it density} 
of a finite mixture model is typically the standard parametric rate of convergence~\citep{genovese2000,ghosal2001,doss2020,ashtiani2020}.  
For example, \citet{heinrich2018} show  that the minimax rate for parameter estimation in a strongly identifiable mixture 
degrades exponentially as the number of components increases, when no separation conditions are placed on these components. 
This result suggests that the estimation of mixture parameters can be prohibitive, even when the number of components 
is moderate.  On the other hand, practitioners have long been employing mixture models successfully, suggesting a discrepancy between
practice and the worst-case  rates suggested by the theory. 

The goal of this paper is to revisit existing convergence rates for parameter estimation in finite mixture models, and to show that they
may be refined by using stronger loss functions than the Wasserstein distance. 
We will argue that the Wasserstein distance is only able to capture the worst-case
convergence rate among the estimated components of a mixture, and that in many cases, the vast majority of estimated component parameters
may achieve considerably faster convergence rates than anticipated from prior work. 
Before describing these phenomena in further detail, we begin by formally introducing finite mixture models and related notions.
%Indeed, the loss functions which we use are variations of the Wasserstein
%distance which use heterogeneous cost functions

\subsection{Problem Setting}
{\bf Finite Mixture Models.} Let $\calF=\{f(x|\theta): 
x \in \calX, \theta \in \Theta \}$ be a known parametric family of density functions 
with respect to a dominating $\sigma$-finite measure $\nu$. 
Here, we assume $\calX \subseteq \bbR^N$ for some $N \geq 1$, 
and $\Theta$ is a parameter space which will either be a subset of the Euclidean space $\bbR^d$, $d \geq 1$, 
or of the set $\bbR^d \times \bbS^d_{++}$, where $\bbS^d_{++}$ denotes the cone of $d\times d$ positive
definite matrices. In either case, we shall always tacitly assume   that $\Theta$ is a  compact set with nonempty interior. 
Let $X_{1}, X_{2}, \ldots, X_{n}$ be an i.i.d. sample
from a finite mixture model with $k_{0}\geq 1$ components, whose density with respect
to $\nu$ is written as
$$p_{G_{0}}(x) : = \int f(x|\theta) dG_{0}(\theta) = \sum_{j = 1}^{k_{0}} p_{j}^{0} f(x|\theta_{j}^{0}), \quad x \in \calX.$$ 
Here $G_{0} = \sum_{j = 1}^{k_{0}} p_j^{0} \delta_{\theta_j^{0}}$ denotes an unknown mixing measure,
where the $p_j^0\geq 0$ are called mixing proportions (or weights), satisfying $\sum_{j=1}^{k_0} p_j^0 = 1$, and
the $\theta_j^0 \in \Theta$ are called atoms, for $j=1, \dots, k_0$. 
When the mixing proportions are strictly positive and the atoms are distinct, we say $G_0$ has true order $k_0$. 
More generally, any finitely-supported probability measure
on $\Theta$ is called a mixing measure, and
its support size is called its order.  The set of mixing measures of order at most $k \geq 1$ is denoted $\calO_k(\Theta)$,
and we write $\calE_k(\Theta) = \calO_k(\Theta) \setminus \calO_{k-1}(\Theta)$. 

When dealing with parameter estimation in a finite mixture model, it is convenient to treat the mixing measure $G_0$ as the target of estimation,
even if the main quantities of interest are the mixing proportions or atoms of $G_0$. Indeed, while the density $p_G$ is typically 
identifiable with respect to its mixing measure $G$, it is never identifiable with respect to the individual parameters of $G$, due to the possibility
of label-switching. 
Throughout our work, we will consider both {\it pointwise} rates of estimating  the mixing measure, that is, estimation rates
which depend on the fixed mixing measure $G_0$, and {\it uniform} estimation rates, which hold uniformly over
all mixing measures under consideration. We will always emphasize the latter setting by allowing $G_0 \equiv G_0^n$ to potentially
depend on the sample size $n$.

{\bf Maximum Likelihood Estimation.} Perhaps the most widely-used estimator of $G_0$ is the maximum likelihood estimator (MLE). 
We focus our analysis on  estimators based on the MLE throughout this work, in part because
they allow for a general theory of parameter estimation
to be derived under minimal conditions
on the family $\calF$. Given an integer $k \geq 1$, 
the MLE of $G_0$ with order at most $k$ is given by
\begin{equation}
\label{eq:mle}
\begin{aligned} 
\widebar G_n &= \sum_{i=1}^{\widebar k_n} \widebar p_i^n \delta_{\widebar\theta_i^n} = \argmax_{G \in \calO_k(\Theta)} \ell_n(G), \qquad\text{where } \ell_n(G) = \sum_{i=1}^n \log p_G(X_i).
\end{aligned} 
\end{equation}
Here, $\bar k_n\leq k$ denotes the fitted order of $\widebar G_n$. 
We have defined the MLE with the general order $k$ to reflect the fact that true order $k_0$ of $G_0$
may be unknown. Notice that $\widebar G_n$ is generally inconsistent if $k < k_0$, 
thus we shall always assume $k \geq k_0$. 
Our convergence rates will depend on the level of misspecification $k-k_0$.
% ; in uniform settings, however,
% the true mixing measure may be statistically indistinguishable from one with order less than $k_0$, 
% thus knowledge of $k_0$ may not generally improve the quality of estimation in this case. 

In certain parts of our development, it will be technically convenient to ensure that the fitted mixing proportions of $\widebar G_n$
do not vanish. While this can be achieved by constraining the maximum in equation~\eqref{eq:mle}, 
we will prefer to achieve
this using a penalty on the likelihood function. Specifically, we follow~\citet{chen1996} and define the penalized MLE of order at most $k$ by 
$$\hat G_n = \sum_{i=1}^{\hat k_n} \hat p_i^n \delta_{\htheta_i^n} = \argmax_{G \in \calO_k(\Theta)} \ell_n(G) + \xi_n \pen(G),$$
where $\hat k_n \leq k$ is the  order of $\widehat G_n$, $\xi_n \geq 0$ is a tuning parameter, and $\pen$ satisfies $\pen(G) \to -\infty$
as the smallest mixing weight of $G$ vanishes. 
For concreteness, 
we will use the~penalty
$\rho(G) = \sum_{j=1}^{k'} \log  p_j'$,
where $k'\leq k $ denotes the order of $G= \sum_{j=1}^{k'} p_j' \delta_{\theta_j'}$.
As discussed in Appendix~\ref{app:numerical}, with this choice of penalty, $\hat G_n$ may be 
numerically approximated using a simple modification of the EM~algorithm.

 In order to evaluate the risk of the estimators $\hat G_n$ and $\widebar G_n$, we will require loss functions defined over 
$\calO_k(\Theta)$. The most widely-used loss function appearing in past work is the Wasserstein distance, which we define next. 

{\bf Wasserstein Distances.} 
Let $k,k'\geq 1$, and set $G = \sum_{i=1}^k p_i \delta_{\theta_i} \in \calO_k(\Theta)$
and $G' = \sum_{j=1}^{k'} p_j'\delta_{\theta_j'} \in \calO_{k'}(\Theta)$. Denote by $\Pi(G, G')$ the set of joint
probability mass functions $\bq  = (q_{ij}: i \in [k],j \in [ k'])$
admitting marginal distributions equal to those of $G$ and $G'$, that is, 
$\sum_{i=1}^k q_{ij} = p_j'$ and $\sum_{j=1}^{k'} q_{ij} = p_i$, for all $i\in [k]$
and $j\in [k']$. The Wasserstein distance of order $r \geq 1$ is defined by 
$$W_r(G, G') = \left(\inf_{\bq \in \Pi(G, G')} \sum_{i=1}^k \sum_{j=1}^{k'} q_{ij} D^r(\theta_i, \theta_j')\right)^{\frac 1 r},$$
where $D$ is a metric on $\Theta$. When $\Theta \subseteq \bbR^d$, 
we shall always assume that $D = \|\cdot\|$ is induced by the Euclidean norm.  

% In the special case $d=1$, it is well-known that the $W_1$ distance is equal to the $L^1$ distance between
% the cumulative distribution functions of $G$ and $G'$, which was already used by~\citet{chen1995} 
% to analyze one-dimensional mixtures. 
The use of Wasserstein distances in general dimension originated
from the work of~\citet{nguyen2013}, and was partly motivated by its implication for the convergence of atoms, as we now recall. 
Let $G_n \in \calO_k(\Theta)$
be a sequence of mixing measures, and $G_0 \in \calE_{k_0}(\Theta)$. Then, if $W_r(G_n, G_0) \leq \alpha_n$ for some 
$\alpha_n \downarrow 0$, there exists a subsequence of $G_n$ such that every atom $\theta_{j}^0$ of $G_0$
is the limit point of at least one  atom $\theta_i^n$ of $G_n$. Furthermore, the convergence rate of 
this fitted atom is $D(\theta_i^n,\theta_{j}^0) \lesssim \alpha_n$. 
When $k > k_0$, there may also be atoms $\theta_\ell^n$ of $G_n$
which do not converge to any atoms of $G_0$. It can be seen that their corresponding mixing proportions $p_\ell^n$ must
then vanish at the rate $\alpha_n^r$. If we instead assume that the mixing proportions of $G_n$ are bounded from below by a positive
constant $c_0 > 0$, it must in fact hold that every atom of $G_n$  converges to an atom of $G_0$ at rate $\alpha_n$. 

We note in particular that the Wasserstein distance can only induce the same convergence rate $\alpha_n $ for those atoms of $G_n$
which approach the atoms of $G_0$. 
In contrast, a key observation of our work is that maximum likelihood-based estimators have atoms which converge at distinct rates; such heterogeneous behaviour cannot be captured by the Wasserstein distance, and is the main subject of this paper.

\subsection{Contributions}
Our goal
is to provide sharper rates of convergence for parameter
estimation in finite mixture models of various types. 
Our main technical contribution is the development of loss
functions over the space of mixing measures, 
which are stronger than
the Wasserstein distance, and which 
correctly characterize the heterogeneous
convergence rates of the various mixture parameters  in 
maximum likelihood-based estimators.
To illustrate the refinements furnished by our theory, 
we consider the following example.
\begin{example}[Pointwise Convergence Rates for Strongly Identifiable Mixtures]
Suppose $\calF$ 
% is a twice strongly-identifiable parametric family, as to be formally defined in Section X. 
% For instance, $\calF$ may be taken to be 
is the location family of Gaussian densities
with known variance. Furthermore, assume $k=k_0+1$. The works of~\citet{chen1995,ho2016EJS} show there exists a constant $C(G_0) > 0$ such that
$$\bbE W_2(\widebar G_n, G_0) \leq C(G_0) (\log n/n)^{1/4}.$$
In particular, it follows that for every atom 
$\theta_j^0$ of $G_0$, there is at least one atom of $\widebar G_n$ which
converges to $\theta_j^0$ at the pointwise rate $(\log n/n)^{1/4}$.
Equivalently, there exists
an injection $u_n : [k_0] \to [k]$ such that 
\begin{equation} 
\label{eq:example_past_rate}
\max_{1 \leq j \leq k_0} \bbE \|\widebar \theta_{u_n(j)}^n - \theta_{j}^0\| \leq C(G_0) \left(\log n/n\right)^{\frac 1 4}.
\end{equation}
In contrast, it will follow from our Theorem~\ref{theorem:rate_MLE_strong_identifiability}
below that 
there exists an injection $v_n:[k_0] \to [k]$ and a permutation $\sigma_n:[k_0]\to[k_0]$ such that
\begin{align*}
\max_{1 \leq j \leq k_0-1} \bbE \|\widebar \theta_{v_n(j)}^n - \theta_{\sigma_n(j)}^0\| &\leq C(G_0) \left(\frac{\log n}{n}\right)^{\frac 1 2}, \\
\bbE \|\widebar \theta_{v_n(k_0)}^n - \theta_{\sigma_n(k_0)}^0\| &\leq C(G_0) \left(\frac{\log n}{n}\right)^{\frac 1 4}.
\end{align*}
This result shows that, ignoring
polylogarithmic factors, all but two
of the atoms of the overfitted MLE $\widebar G_n$ 
achieve the parametric convergence rate. In contrast, equation~\eqref{eq:example_past_rate} merely
shows that  these atoms
converge at the slower rate $(\log n/n)^{1/4}$.
\end{example}

We will show that similar asymptotics hold for a broad
family of strongly identifiable mixture models, and for general
$k \geq k_0$, 
in Section~\ref{sec:pointwise_strongly_identifiable},
We further consider uniform convergence 
rates for such families
in Section~\ref{sec:uniform_settings},
as well as 
pointwise convergence rates
for location-scale
Gaussian mixture models
(Section~\ref{sec:pointwise_weakly_identifiable}), which form an important example
of weakly identifiable finite mixtures. 
We obtain these results
by identifying distinct loss functions
tailored to each of these three settings,
which accurately capture the behaviour of
individual fitted mixture parameters. 

Our results highlight the underappreciated
fact that the Wasserstein distance 
merely quantifies the worst-case
convergence rate among the fitted parameters
of a finite mixture; 
its use in past work may thus have painted an overly pessimistic picture of parameter estimation 
in these models. Though our primary emphasis is
on such theoretical aspects, we 
will also discuss
that certain loss functions developed
in this work enjoy an improved
computational complexity as compared to the Wasserstein distance, and may
therefore be of practical significance
in their own right. 

\noindent

\textbf{Notation.} Given  probability densities $p,q$   dominated by $\nu$, their squared
 Hellinger and Total Variation distances
 are denoted by 
 $h^2(p,q) = \frac 1 2 \int (\sqrt p - \sqrt q)^2 d\nu$
 and $V(p,q) = \frac 1 2 \int |p-q| d\nu$.    $\calO_{k,c_0}(\Theta)$ denotes the set
of mixing measures in $\calO_k(\Theta)$ with mixing weights bounded below by a constant $c_0 >0$, 
and  $\calE_{k,c_0}(\Theta) =  \calO_{k,c_0}(\Theta)\setminus\calO_{k-1}(\Theta)$. 
For any $n \geq 1$, we denote $[n] = \{1, 2, \ldots, n\}$.
For any $a,b \in \bbR$, $a\vee b = \max\{a,b\}$ and
$a\wedge  b = \min\{a,b\}$. 
 Given  $(a_n)_{n\geq 1}, (b_n)_{n\geq 1} \subseteq \bbR_+$, we write $a_n \lesssim b_n$ if
 there exists a universal constant $C > 0$, possibly
 depending on   problem parameters to be understood
 from context, such that $a_n \leq C b_n$ for all $ n\geq 1$. 
We also write $a_n\asymp b_n$ when  $a_n \lesssim b_n \lesssim a_n$. 
$\calC^\alpha(\Theta)$
 denotes the H\"older space of regularity
 $\alpha > 0$ over $\Theta$, 
 with associated norm $\|\cdot\|_{\calC^\alpha(\Theta)}$~\citep{folland1995}.

\section{Preliminaries}
\label{sec:background}
\subsection{Strong Identifiability}
We begin by recalling
the  strong identifiability condition 
for the parametric family $\calF$.
\begin{definition}[Strong Identifiability]
Let $r \geq 0$ be an integer. We say $\calF$
is $r$-strongly identifiable
if $f(x|\cdot) \in \calC^r(\Theta)$ 
for $\nu$-almost every $x \in \calX$, 
and if for any $k \geq 1$,
and $\theta_1,\dots,\theta_k \in \Theta$, 
the following
implication holds for all
$\alpha_\eta^{(i)} \in \bbR$, 
\begin{align*}
\esssup_{x\in \calX} \Bigg| \sum_{\ell=0}^r \sum_{|\eta|=\ell} 
\sum_{i=1}^k  \alpha_\eta^{(i)} \frac{\partial^{|\eta|}
f}{\partial\theta^\eta}(x|\theta_i)\Bigg| = 0
\Longrightarrow 
\max_{|\eta|\leq r} \max_{1 \leq i \leq k} |\alpha_\eta^{(i)}| = 0.
\end{align*}
\end{definition}
The notion of strong
identifiability originates from the work
of~\citet{chen1995}, and is stated here
in a more general form due to~\citet{heinrich2018,ho2016EJS}. 
We refer to these references, as well as that
of~\citet{holzmann2004}, for sufficient
conditions under which the strong
identifiability condition holds. 
For example, this condition is known
to be satisfied for any finite $r \geq 1$
by the location Gaussian parametric family
with known scale parameter, the Poisson
family, and other common exponential families. 
Location-scale Gaussian densities form perhaps
the most widely-used parametric family
which fails to satisfy the $r$-strong identifiability
condition for $r \geq 2$~\citet{ho2016convergence}, 
and we will treat this special case separately. 

We will typically
couple the strong identifiability
condition with the following
assumption on the modulus
of continuity of the derivatives
of $f(x|\cdot)$, up to order $r \geq 1$. 
\begin{enumerate}[leftmargin=1.05cm,listparindent=-\leftmargin,label=\textbf{A($r$)}]   
\item \label{assm:holder}
There exist $\Lambda,\delta > 0$ such that
$$\esssup_{x \in \calX}  
\|f(x|\cdot)\|_{\calC^{r+\delta}(\Theta)} \leq 
\Lambda.$$
\end{enumerate}
Strong identifiability generalizes the condition
of regular identifiability of the family
$\calP_k(\Theta) = \{p_G:G \in \calO_k(\Theta)\}$, and 
is a useful notion for deriving
inequalities between 
Wasserstein-type distances
over $\calO_k(\Theta)$ and statistical distances over
$\calP_k(\Theta)$. Such bounds
are at the heart of our proofs, and
will allow us to derive parameter estimation rates
from known
convergence rates for maximum likelihood density estimation, 
to which we turn our attention~next. 

\subsection{Convergence Rates for Maximum Likelihood
Density Estimators}
In order to state a rate of convergence
for the density estimators
$p_{\hat G_n}$ and $p_{\widebar G_n}$, for instance
under the Hellinger distance, 
we require a condition on the complexity
of the class
\begin{equation*} 
\widebar\calP_k^{1/2}(\Theta,\epsilon)  
 = \left\{\bar p_G^{1/2}  : G \in \calO_k(\Theta), ~ h(\bar p_G, p_{G_0}) \leq \epsilon\right\},
\end{equation*}
where $\epsilon > 0$, and for any $G \in \calO_k(\Theta)$, 
we write $p_G = (p_G + p_{G_0})/2$. 
The definition of $\widebar\calP_k^{1/2}(\Theta,\epsilon)$
originates from~\citet{vandegeer2000}, 
who place conditions on the convex
combinations $\bar p_G$, rather
than $p_G$, as this choice
is guaranteed to place a
non-negligible amount
of probability mass
over the support of $p_{G_0}$. 
The complexity of this class
is measured 
through the bracketing entropy integral
\begin{equation*}   
% \label{eq:bracketing_integral} 
\begin{aligned}
\calJ_B&(\epsilon, \widebar\calP_k^{1/2}(\Theta, \epsilon), \nu)
 {=} \int_0^\epsilon \sqrt{H_B(u, \widebar\calP_k^{1/2}(\Theta,u), \nu)}du~ {\vee} \epsilon,
 \end{aligned}
 \end{equation*}
 where $H_B(\epsilon, \calP, \nu)$ denotes
 the $\epsilon$-bracketing entropy
 of a set $\calP \subseteq L^2(\nu)$
 with respect to the $L^2(\nu)$ metric~\citep{vandegeer2000}. 
We shall assume that this quantity
satisfies the following condition. 
\begin{enumerate}[leftmargin=1.05cm,listparindent=-\leftmargin,label=\textbf{B($k$)}]   
\item  \label{assm:bracket}
Given a universal constant $J > 0$,
there exists a constant $L > 0$, possibly
depending on $d$ and $k$, such that
for all $n \geq 1$ and all $\epsilon > L (\log n/n)^{1/2}$,
$$\calJ_B(\epsilon, \widebar\calP_k^{1/2}(\Theta,\epsilon), \nu) \leq J \sqrt n \epsilon^2.$$
\end{enumerate}
We are now ready to state
the following convergence rates.
\begin{theorem}
\label{thm:density_estimation_rate_penalized}
Given $k \geq 1$, 
assume  condition~\ref{assm:bracket} holds.
\begin{enumerate} 
\item[(i)] There exists a  constant $C > 0$ depending
only on $d,k,\calF$ such that for all $n \geq 1$, 
$$\sup_{G_0 \in \calO_{k}(\Theta)}
\bbE_{G_0} h(p_{\widebar G_n}, p_{G_0})  
\leq  C\sqrt{\frac{\log n}{n}}.$$
\item[(ii)]
Furthermore, given $c_0,c_1 > 0$, if $0 \leq \xi_n \leq c_1 \log n$, then there exists a constant $C' > 0$ depending on
$d,k,c_0,c_1,\calF$ such that for all $n \geq 1$, 
$$\sup_{G_0 \in \calO_{k,c_0}(\Theta)}
\bbE_{G_0} h(p_{\hat G_n}, p_{G_0})  
\leq  \frac{C'\log n}{\sqrt n}.$$
\end{enumerate}
\end{theorem}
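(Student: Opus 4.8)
The plan is to obtain both bounds as consequences of the general theory of maximum likelihood density estimation under bracketing-entropy control, following \citet{vandegeer2000}; condition~\ref{assm:bracket} has been tailored precisely so as to verify the hypotheses of that theory with rate parameter $\delta_n \asymp \sqrt{\log n/n}$. Throughout, write $\bar p_G = (p_G + p_{G_0})/2$, so that $\widebar\calP_k^{1/2}(\Theta,\epsilon)$ is exactly the localized class whose entropy integral is controlled by~\ref{assm:bracket}, uniformly over $G_0$.

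For part (i), I would start from the defining inequality $\ell_n(\widebar G_n) \geq \ell_n(G_0)$. The elementary concavity bound $\log\big(\tfrac{a+b}{2b}\big) \geq \tfrac12\log(a/b)$, applied with $a = p_{\widebar G_n}(X_i)$ and $b = p_{G_0}(X_i)$, gives $\sum_{i=1}^n \log\big(\bar p_{\widebar G_n}(X_i)/p_{G_0}(X_i)\big) \geq \tfrac12\sum_{i=1}^n\log\big(p_{\widebar G_n}(X_i)/p_{G_0}(X_i)\big) \geq 0$, so $\bar p_{\widebar G_n}$ maximizes the empirical likelihood ratio over the convex-combination class up to no slack. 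One then runs the peeling-and-chaining argument of \citet{vandegeer2000} over the Hellinger shells $\{G : h(\bar p_G, p_{G_0}) \in [2^j\delta_n, 2^{j+1}\delta_n]\}$, bounding the modulus of the relevant empirical process on each shell by means of the entropy bound~\ref{assm:bracket} after the standard truncation of the region where $p_{G_0}$ is small (this truncation is what forces $\delta_n$ to carry the $\log n$ factor rather than being of order $n^{-1/2}$). This yields $h(\bar p_{\widebar G_n}, p_{G_0}) \lesssim \delta_n$ on an event of probability at least $1 - c_1\exp(-c_2 n\delta_n^2) = 1 - c_1 n^{-c_2}$, where $c_2$ can be taken as large as desired by choosing the constant $J$ in~\ref{assm:bracket} small. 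The elementary comparison $h^2(p_G, p_{G_0}) \lesssim h^2(\bar p_G, p_{G_0})$ then upgrades this to $h(p_{\widebar G_n}, p_{G_0}) \lesssim \delta_n$ on the same event; on its complement one uses the trivial bound $h \leq 1$, and since $c_1 n^{-c_2} \ll \delta_n$ the claimed bound in expectation follows. Because the entropy bound~\ref{assm:bracket} is uniform over $G_0 \in \calO_k(\Theta)$, so is the conclusion, which accounts for the supremum in the statement, and all constants depend only on $d, k, \calF$.

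For part (ii) the argument is identical except that the basic inequality becomes $\ell_n(\hat G_n) + \xi_n\rho(\hat G_n) \geq \ell_n(G_0) + \xi_n\rho(G_0)$. Since $\rho(G) \leq 0$ for every $G$, one may discard the nonnegative term $-\xi_n\rho(\hat G_n)$, and since $G_0 \in \calO_{k,c_0}(\Theta)$ one has $\rho(G_0) \geq k\log c_0$, a constant depending only on $k$ and $c_0$; hence $\sum_{i=1}^n\log\big(p_{\hat G_n}(X_i)/p_{G_0}(X_i)\big) \geq \xi_n\rho(G_0) \geq -k\log(1/c_0)\,\xi_n$. Combining with the concavity bound shows that $\bar p_{\hat G_n}$ maximizes the empirical likelihood ratio up to a slack of order $\xi_n \leq c_1\log n$. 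Feeding this slack through the peeling argument replaces the fixed-point relation ``$-c\,h^2 + c'\delta_n h \geq 0$'' by ``$-c\,h^2 + c'\delta_n h \geq -C\xi_n/n$'', and a (conservative) accounting of this extra term, together with the $\log n$ already incurred by the truncation of $\{p_{G_0}$ small$\}$, yields $h(\bar p_{\hat G_n}, p_{G_0}) \lesssim \log n/\sqrt n$. Passing from $\bar p_{\hat G_n}$ to $p_{\hat G_n}$ and from a high-probability bound to a bound in expectation is done exactly as in part (i); the constants now additionally depend on $c_0$ (through $\rho(G_0)$) and $c_1$ (through the cap on $\xi_n$).

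I expect the main obstacle to be the control of the empirical process $n^{-1}\sum_{i}\log\big(\bar p_G(X_i)/p_{G_0}(X_i)\big)$, indexed by the non-Donsker, unbounded-envelope family $\{p_G : G \in \calO_k(\Theta)\}$, uniformly over Hellinger shells — this is precisely what the bracketing-entropy machinery of \citet{vandegeer2000} is designed to handle, at the cost of the truncation step that introduces the $\log n$, and it is where condition~\ref{assm:bracket} does the real work. In part (ii) the secondary difficulty is that $\rho(\hat G_n)$ is not bounded a priori — it is kept finite only by the optimization itself — so some care is required to verify that the penalty term can be absorbed into the slack of the peeling argument without degrading the bound beyond the stated $\log n/\sqrt n$.
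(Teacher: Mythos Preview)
Your proposal is correct and follows essentially the same approach as the paper, which likewise derives both parts from van de Geer's bracketing-entropy theory for MLE (citing Theorem~7.4 of \cite{vandegeer2000} directly for part~(i) and giving the detailed peeling argument for part~(ii)), handling the penalty in part~(ii) exactly as you do by discarding $-\xi_n\rho(\hat G_n)\geq 0$, bounding $|\rho(G_0)|$ via the lower bound $c_0$ on the mixing weights, and absorbing the resulting $O(\xi_n/n)$ slack into the peeling. One minor correction: the $\log n$ factor in the rate comes from the logarithmic bracketing entropy of the parametric class (and is already baked into condition~\ref{assm:bracket}), not from a truncation step---indeed, the passage to $\bar p_G=(p_G+p_{G_0})/2$ is precisely what obviates truncation in van de Geer's framework---but this does not affect the validity of your argument.
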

Theorem~\ref{thm:density_estimation_rate_penalized}(i) is a direct
consequence of generic
results for maximum 
likelihood density estimation
(for instance, Theorem 7.4 of~\citet{vandegeer2000}). 
Its application to finite
mixture models has previously
been discussed by~\citet{ho2016EJS},
who also argue that condition~\ref{assm:bracket}
is satisfied by a broad
collection of parametric families $\calF$, 
including the multivariate location-scale Gaussian 
and Student-$t$ families. 
A version of 
Theorem~\ref{thm:density_estimation_rate_penalized}(ii)
is implicit in the work of~\citet{manole2021b}, 
though with a stronger condition
on the tuning parameter $\xi_n$. We provide
a self-contained proof of this result
in Appendix~\ref{sec:proof} for completeness. 

These results  
may also be used to show that the penalized
MLE has nonvanishing
mixing proportions. 
\begin{proposition}
\label{prop:bded_mixing}
Let $k \geq 1$, $c_0 \in (0,1)$, and assume
condition~\ref{assm:bracket} holds. 
Assume further that $\xi_n \geq \log n$. 
Then, there exists a constant $c > 1$
depending on $c_0, d, k,\calF$ such that
for all $n \geq 1$, 
$$\sup_{G_0 \in \calO_{k,c_0}(\Theta)}
\bbP_{G_0}\left( \min_{1 \leq j \leq {\hat k_n}}\hat p_j^n 
\geq \frac 1 c \right) \leq \frac c n.$$
\end{proposition}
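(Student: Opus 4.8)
As displayed, the right-hand side should read $1-c/n$ in place of $c/n$ (equivalently, the event inside the probability should be complemented): the claim is that, with probability at least $1-c/n$ and uniformly over $G_0\in\calO_{k,c_0}(\Theta)$, every fitted mixing weight of $\hat G_n$ is bounded below by $1/c$. As literally typeset the inequality fails already for $k=1$, where $\hat k_n=1$ and $\hat p_1^n=1\geq 1/c$ deterministically, so the event has probability one. I sketch a proof of the corrected form, obtained by pitting the ``basic inequality'' for the penalized MLE against the empirical-process bound underlying Theorem~\ref{thm:density_estimation_rate_penalized}. Since $\hat G_n$ maximizes $\ell_n+\xi_n\rho$ over $\calO_k(\Theta)$ and $G_0\in\calO_{k,c_0}(\Theta)\subseteq\calO_k(\Theta)$ is feasible (recall $k\geq k_0$, and the weights of $G_0$ are positive and finite), rearranging $\ell_n(\hat G_n)+\xi_n\rho(\hat G_n)\geq\ell_n(G_0)+\xi_n\rho(G_0)$ and using $\rho(\hat G_n)=\sum_{j=1}^{\hat k_n}\log\hat p_j^n$ yields
\[
\xi_n\sum_{j=1}^{\hat k_n}\log\frac{1}{\hat p_j^n}\ \leq\ \sum_{i=1}^n\log\frac{p_{\hat G_n}(X_i)}{p_{G_0}(X_i)}\ -\ \xi_n\rho(G_0),\qquad\text{with } -\rho(G_0)\leq k\log(1/c_0),
\]
the last bound holding because the $k_0\leq k$ weights of $G_0$ are each at least $c_0\in(0,1)$.

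It remains to bound the log-likelihood ratio from above on a high-probability event. Writing $\bar p_G=(p_G+p_{G_0})/2$, the pointwise inequality $\log(p_{\hat G_n}/p_{G_0})\leq 2\log(\bar p_{\hat G_n}/p_{G_0})$ follows from $p_{\hat G_n}+p_{G_0}\geq 2\sqrt{p_{\hat G_n}p_{G_0}}$. The peeling/chaining argument that powers Theorem~\ref{thm:density_estimation_rate_penalized} — driven by condition~\ref{assm:bracket}, whose constants depend only on $d,k,\calF$ and hence hold uniformly in $G_0$ — produces an event $\calA_n$ with $\bbP_{G_0}(\calA_n^c)\leq c'/n$ on which
\[
\sum_{i=1}^n\log\frac{\bar p_G(X_i)}{p_{G_0}(X_i)}\ \leq\ -\,\frac{n}{4}\,h^2(\bar p_G,p_{G_0})\ +\ C_2\log n\qquad\text{for all }G\in\calO_k(\Theta),
\]
with $c',C_2$ depending only on $d,k,\calF$. (This is exactly the inequality from which the Hellinger rate in the proof of Theorem~\ref{thm:density_estimation_rate_penalized} is extracted; cf.\ Theorem~7.4 of~\cite{vandegeer2000}.)

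Plugging $G=\hat G_n$ into the last display, combining with the basic inequality, and discarding the nonpositive Hellinger term shows that on $\calA_n$,
\[
\xi_n\sum_{j=1}^{\hat k_n}\log\frac{1}{\hat p_j^n}\ \leq\ 2C_2\log n\ +\ \xi_n\,k\log(1/c_0).
\]
Dividing by $\xi_n\geq\log n$ gives $\sum_{j=1}^{\hat k_n}\log(1/\hat p_j^n)\leq 2C_2+k\log(1/c_0)=:\log c_3$, hence $\min_{1\leq j\leq\hat k_n}\hat p_j^n\geq 1/c_3$ on $\calA_n$. Taking $c=\max\{c',c_3,2\}$ then gives $\bbP_{G_0}(\min_j\hat p_j^n\geq 1/c)\geq\bbP_{G_0}(\calA_n)\geq 1-c/n$, uniformly over $G_0\in\calO_{k,c_0}(\Theta)$.

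The only substantive step is the construction of $\calA_n$: a uniform-in-$G_0$, probability-$(1-c'/n)$ bound of modulus-of-continuity type for the empirical process indexed by $\{\bar p_G^{1/2}:G\in\calO_k(\Theta)\}$. This is precisely the machinery used to prove Theorem~\ref{thm:density_estimation_rate_penalized} and can be imported verbatim, so I do not expect it to pose new difficulties; the hypothesis $\xi_n\geq\log n$ is used only to absorb the $O(\log n)$ remainder into an absolute constant after dividing. A more hands-on variant would instead delete the smallest-weight atom of $\hat G_n$, compare penalized objectives, and bound the resulting likelihood change via the density rate, but the route above avoids controlling unbounded likelihood ratios and is cleaner.
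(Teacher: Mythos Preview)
Your proposal is correct and follows essentially the same route as the paper: both combine the basic inequality for the penalized MLE with an empirical-process bound on the log-likelihood ratio (via condition~\ref{assm:bracket} and Theorem~7.4/Corollary~7.5 of~\cite{vandegeer2000}) to deduce $\rho(\hat G_n)\geq -C_1$ on an event of probability $1-c/n$, and you correctly flag the typo in the displayed inequality. The only cosmetic difference is that the paper first bounds $\ell_n(\hat G_n)-\ell_n(G_0)\leq \ell_n(\widebar G_n)-\ell_n(G_0)$ and applies the LRT bound (their Proposition~\ref{prop:lrt}) to the \emph{unpenalized} MLE $\widebar G_n$, whereas you invoke the uniform-in-$G$ form of the same empirical-process inequality directly.
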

In view of Proposition~\ref{prop:bded_mixing}
and Theorem~\ref{thm:density_estimation_rate_penalized}, 
we shall always tacitly assume that the tuning parameter
 $\xi_n$ is equal to~$\log n$.
% With these results in place, we turn to
% deriving parameter estimation
% rates 
% under mixtures of various~types.
 
\section{Pointwise Convergence Rates of the MLE}
\label{sec:pointwise_settings}
We first derive pointwise convergence
rates for estimating a fixed  mixing measure $G_{0} \in \calE_{k_0}(\Theta)$.
\subsection{Strongly Identifiable Case} 
\label{sec:pointwise_strongly_identifiable}
Assume the family $\calF$ is twice strongly
identifiable, with 
a compact parameter space
$\Theta \subseteq \bbR^d$ admitting nonempty interior.
We begin by defining a loss function on $\calO_k(\Theta)$
tailored to this setting. 
Given a mixing measure
$G = \sum_{i = 1}^{k'} p_{i} \delta_{\theta_{i}}$ of order $k' \leq k$, we partition its atoms
into the following Voronoi cells, 
generated by the support
of $G_{0}$,  
$$\mathcal{A}_{j} \equiv \mathcal{A}_{j}(G) = 
  \{i \in [k']: \|\theta_{i} - \theta_{j}^{0}\| \leq \|\theta_{i} - \theta_{\ell}^{0}\| \ \forall \ell \neq j
  \},$$ 
for all $j \in [k_{0}]$. We may then define the loss function
\begin{align}
    \mathcal{D}(G, G_{0}) : = \sum_{j: |\mathcal{A}_{j}| > 1} \sum_{i \in \mathcal{A}_{j}} p_{i} \|\theta_{i} - \theta_{j}^{0}\|^2  + \sum_{j: |\mathcal{A}_{j}| = 1} \sum_{i \in \mathcal{A}_{j}} p_{i} \|\theta_{i} - \theta_{j}^{0}\| + \sum_{j = 1}^{k_{0}} \left|\sum_{i \in \mathcal{A}_{j}} p_{i} - p_{j}^{0}\right|. \label{eq:first_discrepancy} % & \nonumber \\
    %& \hspace{-8 em}
    %+ \sum_{i \not \in \cup_{j = 1}^{k_{0}} \mathcal{A}_{j, \epsilon}} p_{i}. \label{eq:first_discrepancy}
\end{align}
Clearly,  $\mathcal{D}(G, G_{0}) {=} 0$ if and only if $G {=} G_{0}$. Under this~loss function, we obtain the following bound on the risk of~$\widebar G_n$. 
\begin{theorem}
\label{theorem:rate_MLE_strong_identifiability}
Let $k \geq k_0$. Assume that the parametric  family $\calF$ is 2-strongly identifiable, and satisfies conditions~\hyperref[assm:holder]{\textbf{A($2$)}} and~\ref{assm:bracket}. Then,  
there exists a constant $C(G_0) > 0$, 
depending on $G_0,d,k,\calF$, such that
\begin{align*}
    \bbE \big[\mathcal{D}(\widebar{G}_{n}, G_{0}) \big]
    \leq C(G_0) \sqrt{ \frac{\log n}{n}}.
\end{align*}
\end{theorem}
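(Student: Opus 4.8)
The plan is to reduce the theorem to a single deterministic inequality between the loss $\mathcal{D}(\cdot,G_0)$ and a statistical distance between mixture densities, and then feed in the density-estimation rate of Theorem~\ref{thm:density_estimation_rate_penalized}(i). Concretely, I would establish that there is a constant $C(G_0)>0$, depending only on $G_0,d,k,\calF$, such that
\[
\mathcal{D}(G,G_0)\ \le\ C(G_0)\, h(p_G,p_{G_0})\qquad\text{for all }G\in\calO_k(\Theta).
\]
Granting this, applying it with $G=\widebar G_n$ and taking expectations gives $\bbE[\mathcal{D}(\widebar G_n,G_0)]\le C(G_0)\,\bbE\, h(p_{\widebar G_n},p_{G_0})\lesssim C(G_0)\sqrt{\log n/n}$ by Theorem~\ref{thm:density_estimation_rate_penalized}(i), which is exactly the claim. (Equivalently one may work with $V(p_G,p_{G_0})$ and use $V\le\sqrt2\,h$.)

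The inequality splits into a global and a local regime. For the \emph{global} regime, fix $\epsilon_0>0$ and consider $\mathcal S_{\epsilon_0}=\{G\in\calO_k(\Theta):W_1(G,G_0)\ge\epsilon_0\}$. This set is compact, being a closed subset of the $W_1$-compact space $\calO_k(\Theta)$; on it, $\mathcal{D}(G,G_0)$ is bounded above by a constant depending only on $\mathrm{diam}(\Theta)$ and $k_0$, whereas $G\mapsto h(p_G,p_{G_0})$ is continuous (by the $L^1(\nu)$-continuity of $\theta\mapsto f(\cdot|\theta)$, which is implied by the smoothness hypotheses) and strictly positive, since strongly identifiable families have identifiable mixture densities, so $W_1(G,G_0)\ge\epsilon_0$ forces $p_G\ne p_{G_0}$. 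Hence $h(p_G,p_{G_0})$ attains a positive minimum on $\mathcal S_{\epsilon_0}$, and the inequality holds there with a constant depending on $\epsilon_0$ and $G_0$.

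The \emph{local} regime — the inequality for $W_1(G,G_0)\le\epsilon_0$ with $\epsilon_0$ small — is the crux, and I expect it to be the main obstacle. I would argue by contradiction: otherwise there is a sequence $G_n$ with $W_1(G_n,G_0)\to0$ and $h(p_{G_n},p_{G_0})/\mathcal{D}(G_n,G_0)\to0$; passing to a subsequence, one may assume the number of atoms of $G_n$, the Voronoi structure $\mathcal A_j=\mathcal A_j(G_n)$, and the limits of all atoms and weights are fixed, with each atom either converging to some $\theta_j^0$ or carrying vanishing mass. Write $M_n=\mathcal{D}(G_n,G_0)\to0$, and Taylor-expand $p_{G_n}-p_{G_0}$ cell by cell around the $\theta_j^0$ to second order, using condition~\hyperref[assm:holder]{\textbf{A($2$)}} to bound the remainders by $\|\theta_i^n-\theta_j^0\|^{2+\delta}$ uniformly in $x$. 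The point is that $\mathcal{D}$ is calibrated so that, after dividing by $M_n$: the weight-discrepancy coefficients, the first-order coefficients of singleton cells ($|\mathcal A_j|=1$), and the second-order coefficients of non-singleton cells all stay bounded; the second-order coefficients of singleton cells and all Taylor remainders vanish. The only coefficients that may be unbounded are the first-order coefficients $\gamma_j^n:=\sum_{i\in\mathcal A_j}p_i^n(\theta_i^n-\theta_j^0)/M_n$ of the non-singleton cells, which Cauchy--Schwarz bounds by $O(M_n^{-1/2})$. To handle this I would renormalize by $\widetilde M_n:=M_n\vee\max_{j:|\mathcal A_j|>1}\big\|\sum_{i\in\mathcal A_j}p_i^n(\theta_i^n-\theta_j^0)\big\|$ and split into two cases. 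If $\widetilde M_n\asymp M_n$, dividing by $M_n$ makes every coefficient bounded; since the three constituent sums of $\mathcal{D}$ total $M_n$, at least one contributes a non-vanishing coefficient (its weight part contributes a nonzero multiple of some $f(x|\theta_j^0)$, its singleton part a nonzero multiple of some $\partial^\eta f(x|\theta_j^0)$ with $|\eta|=1$, and its non-singleton part a nonzero symmetric matrix of $|\eta|=2$ coefficients via a trace bound), so the subsequential limit $\phi$ of $(p_{G_n}-p_{G_0})/M_n$ is a nontrivial linear combination of $\{f(x|\theta_j^0),\partial^\eta f(x|\theta_j^0):|\eta|\le2,\ j\in[k_0]\}$; but $h(p_{G_n},p_{G_0})/M_n\to0$ forces $\phi=0$ $\nu$-a.e., contradicting $2$-strong identifiability. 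If instead $\widetilde M_n/M_n\to\infty$, dividing by $\widetilde M_n$ kills every term except the offending first-order coefficients, one of which has unit norm by construction, and the same argument — now invoking only first-order strong identifiability — again yields a contradiction.

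Two technical points deserve care, though I would not expect them to pose real difficulty. First, atoms of $G_n$ whose mass vanishes but which do not approach $\mathrm{supp}(G_0)$ contribute, in the limit, additional terms $f(x|\theta^\infty)$ with $\theta^\infty\notin\mathrm{supp}(G_0)$; these remain linearly independent of the other functions above (strong identifiability concerns distinct parameters), so they only strengthen the non-triviality of $\phi$, and if their rescaled masses are unbounded they can be absorbed into the renormalization exactly as the $\gamma_j^n$ were. Second, the passage from $L^1(\nu)$-convergence $(p_{G_n}-p_{G_0})/M_n\to\phi$ and $\|(p_{G_n}-p_{G_0})/M_n\|_{L^1}\to0$ to $\phi=0$ $\nu$-a.e. should be done by extracting a further a.e.-convergent subsequence, rather than by dominated convergence, since $\nu$ may be infinite. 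Combining the local and global bounds yields $\mathcal{D}(G,G_0)\lesssim h(p_G,p_{G_0})$ on all of $\calO_k(\Theta)$, and the theorem follows as in the first paragraph.
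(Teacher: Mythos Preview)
Your proposal is correct and follows essentially the same route as the paper's Lemma~\ref{lemma:key_bound_strong_identifiability}: reduce to the deterministic inequality $\mathcal{D}(G,G_0)\lesssim V(p_G,p_{G_0})\le h(p_G,p_{G_0})$, split into a global and a local regime, and prove the local bound by contradiction via a cell-by-cell Taylor expansion followed by an appeal to $2$-strong identifiability. Your renormalization by $\widetilde M_n$ with an explicit two-case split is just a reorganization of the paper's device of dividing by the largest coefficient $m_n$ (and then using that $d_n=1/m_n$ is bounded), and your handling of atoms with vanishing mass that do not approach $\mathrm{supp}(G_0)$ is slightly more explicit than the paper's, but the substance is the same.
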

The proof of Theorem~\ref{theorem:rate_MLE_strong_identifiability} appears in Appendix~\ref{subsec:proof:theorem:rate_MLE_strong_identifiability}, where the main 
difficulty is to prove the following lower bound of the Hellinger distance in terms of $\calD$,
\begin{align}
   \calD(G,G_{0}) \leq C(G_0) h(p_{G}, p_{G_{0}}),
\end{align}
for any $G \in \mathcal{O}_{k}(\Theta)$. 
Using Theorem~\ref{thm:density_estimation_rate_penalized}(i), the above bound directly leads to the
stated convergence rate of $\widebar{G}_{n}$.

A few comments regarding Theorem~\ref{theorem:rate_MLE_strong_identifiability} are in order. First,
let $\mathcal{A}_{j}^{n} = \mathcal{A}_{j}(\widebar{G}_{n})$ for all $j \in [k_{0}]$.
The convergence rate $\sqrt{\log n/ n}$ of $\mathcal{D}(\widebar{G}_{n},G_{0})$ implies that for any index $j \in [k_{0}]$ such that $|\mathcal{A}_{j}^{n}| = 1$, $\| \widebar{\theta}_{i}^{n} - \theta_{j}^{0}\|$ and $|\widebar{p}_{i}^{n} - p_{i}^{0}|$ vanish at the near-parametric rate $\sqrt{\log n/ n}$ for $i \in \mathcal{A}_{j}^{n}$. 
Therefore, among the true components
which are only approximated by a single fitted component,
the parameters of this fitted component
converge as fast as if the
order $k \geq k_0$ were not overspecifed.
In particular, in the exact-fitted setting $k = k_0$, 
we find that all fitted components
and mixing proportions
converge at the parametric rate, up 
to a polylogarithmic factor, which
recovers Theorem~3.1 of~\citet{ho2016EJS}. 
Furthermore, when $k > k_0$, 
for any index $j \in [k_{0}]$ such that $|\mathcal{A}_{j}^{n}| \geq 2$, $\sum_{i \in \mathcal{A}_{j}^{n}} \widebar{p}_{i}^{n} \|\widebar{\theta}_{i}^{n} - \theta_{j}^{0}\|^2$ and $|\sum_{i \in \mathcal{A}_{j}^{n}} \widebar{p}_{i}^{n} - p_{j}^{0}|$ decay at the rate $\sqrt{\log n/ n}$. 
In particular, it follows that for
every such $j$, there exists $i \in \mathcal{A}_{j}^{n}$ such that $\widebar{\theta}_{i}^{n}$ converges to $\theta_{j}^{0}$ at the rate $(\log n/ n)^{1/4}$,
which is now markedly slower than the parametric
rate. In contrast, 
the past works of~\citet{chen1995, nguyen2013, ho2016EJS} show that $\bbE W_{2}^2(\widebar{G}_{n}, G_{0}) \lesssim \sqrt{\log n/n}$, which 
implies a convergence rate no better 
than $(\log n/ n)^{1/4}$ 
for all atoms of the MLE, rather than just
those lying in a set~$\mathcal{A}_{j}^{n}$ with cardinality greater than one. 
These existing results painted a pessimistic picture
of maximum likelihood estimation in 
overspecified mixtures---for example, they suggest
that overspecifying the order $k_0$
merely by $k=k_0+1$ leads to poor convergence 
rates for each of the $k$ fitted atoms, whereas our
work shows that 
at least $k_0-1$ fitted atoms enjoy
considerably faster convergence rates.  
% The loss function
%  $\mathcal{D}$ thus provides a more precise picture about the pointwise convergence
%  of individual weights and atoms of the MLE, for 
% strongly identifiable mixtures in the second order.

% Second, in Lemma~\ref{lemma:key_bound_strong_identifiability} in Appendix~\ref{subsec:proof:theorem:rate_MLE_strong_identifiability}, we prove the following lower bound of Hellinger distance between $p_{G}$ and $p_{G_{0}}$ in terms of $\mathcal{D}(G, G_{0})$ (cf. Lemma~\ref{lemma:key_bound_strong_identifiability} in Appendix~\ref{subsec:proof:theorem:rate_MLE_strong_identifiability}):
% \begin{align}
%     h(p_{G}, p_{G_{0}}) \geq C \calD(G,G_{0}),
% \end{align}
% for any $G \in \mathcal{O}_{k}(\Theta)$. Here, $C > 0$ is some universal constant depending only on $G_{0}$ and $\Theta$. Since $h(p_{\widehat{G}_{n}}, p_{G_{0}}) \lesssim \sqrt{\log n/ n}$, the above bound directly leads to the convergence rate of $\widehat{G}_{n}$ in Theorem~\ref{theorem:rate_MLE_strong_identifiability}.

Second, we can demonstrate that $\mathcal{D}  \gtrsim W_{2}^2 $, and 
$$\sup_{\substack{G \neq G_0 \\ G \in \mathcal{O}_{k}(\Theta)}} \mathcal{D}(G, G_{0})/ W_{2}^2(G, G_{0}) =\infty.$$
See Lemma~\ref{lemma:relation_strong_identifiability} in Appendix~\ref{sec:auxiliary} for a formal statement. 
This shows that $\mathcal{D}$ is a stronger loss function than the Wasserstein distance.
In particular, we deduce that 
% the convergence rate of $\widebar{G}_{n}$ in 
that Theorem~\ref{theorem:rate_MLE_strong_identifiability} 
% under  $\mathcal{D}$
also implies the 
aforementioned convergence rate of $\widebar{G}_{n}$ under the Wasserstein distance.  

Finally, the complexity of computing $\mathcal{D}(G, G_{0})$
is of the order of $O(k \times k_{0})$. In contrast, computing $W_{2}(G, G_{0})$ is equivalent to solving a linear programming problem, which has complexity no better than $O(k^3)$~\citep{pele2009}. Therefore, the loss function $\mathcal{D}$ is computationally more efficient than the Wasserstein metric. 
This observation is significant because
the Wasserstein distance has previously
been used %not only as a theoretical device, but
as a methodological tool for model
selection %procedures 
in finite mixtures~\citep{guha2021Bernoulli}. In these applications, 
the loss function $\calD$  provides
an alternative to $W_2^2$ which is both statistically
and computationally more~efficient. 
\begin{figure*}
    \centering
    \begin{tabular}{cc}
        \includegraphics[width=80mm]{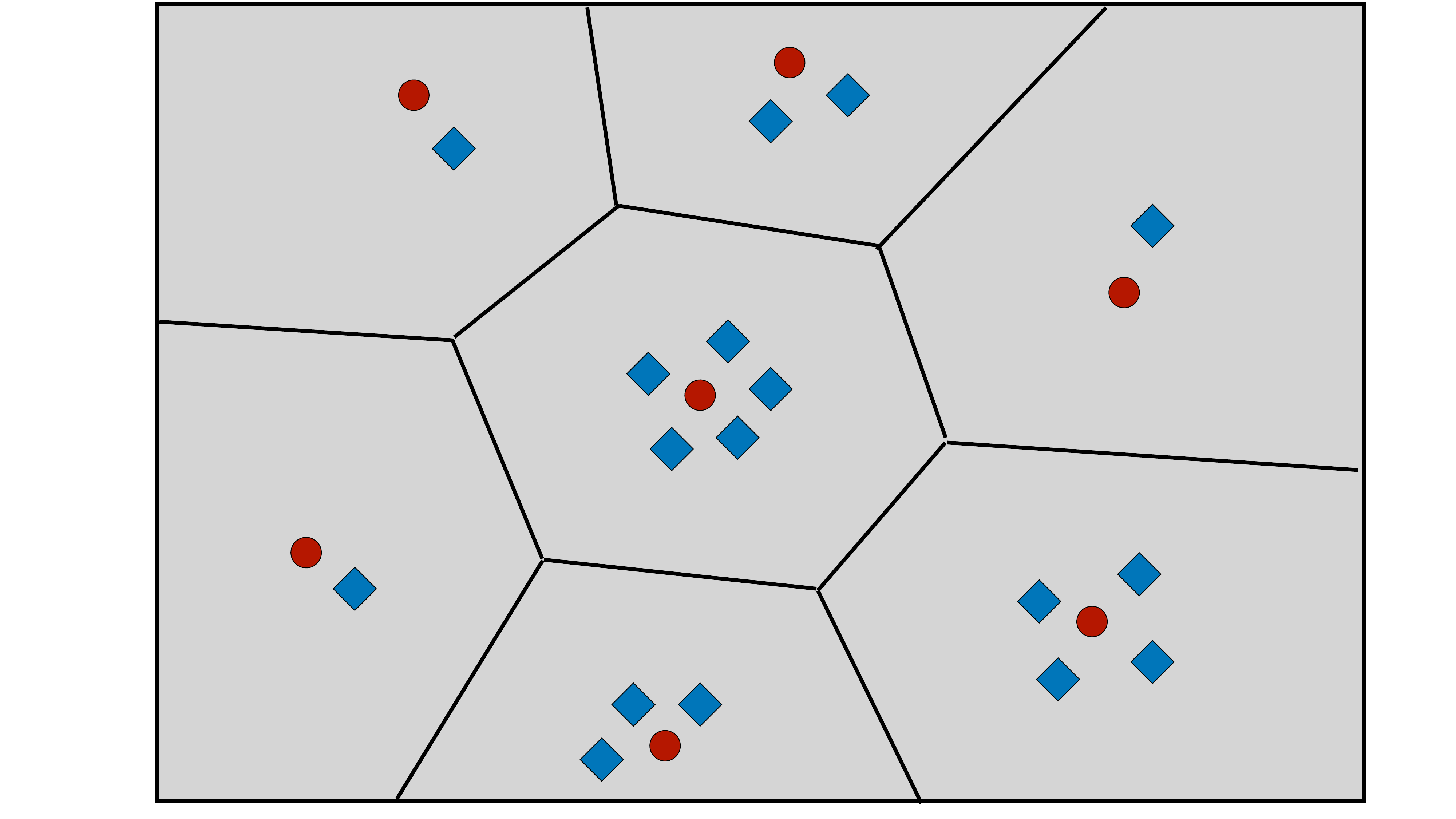} &
        \includegraphics[width=80mm]{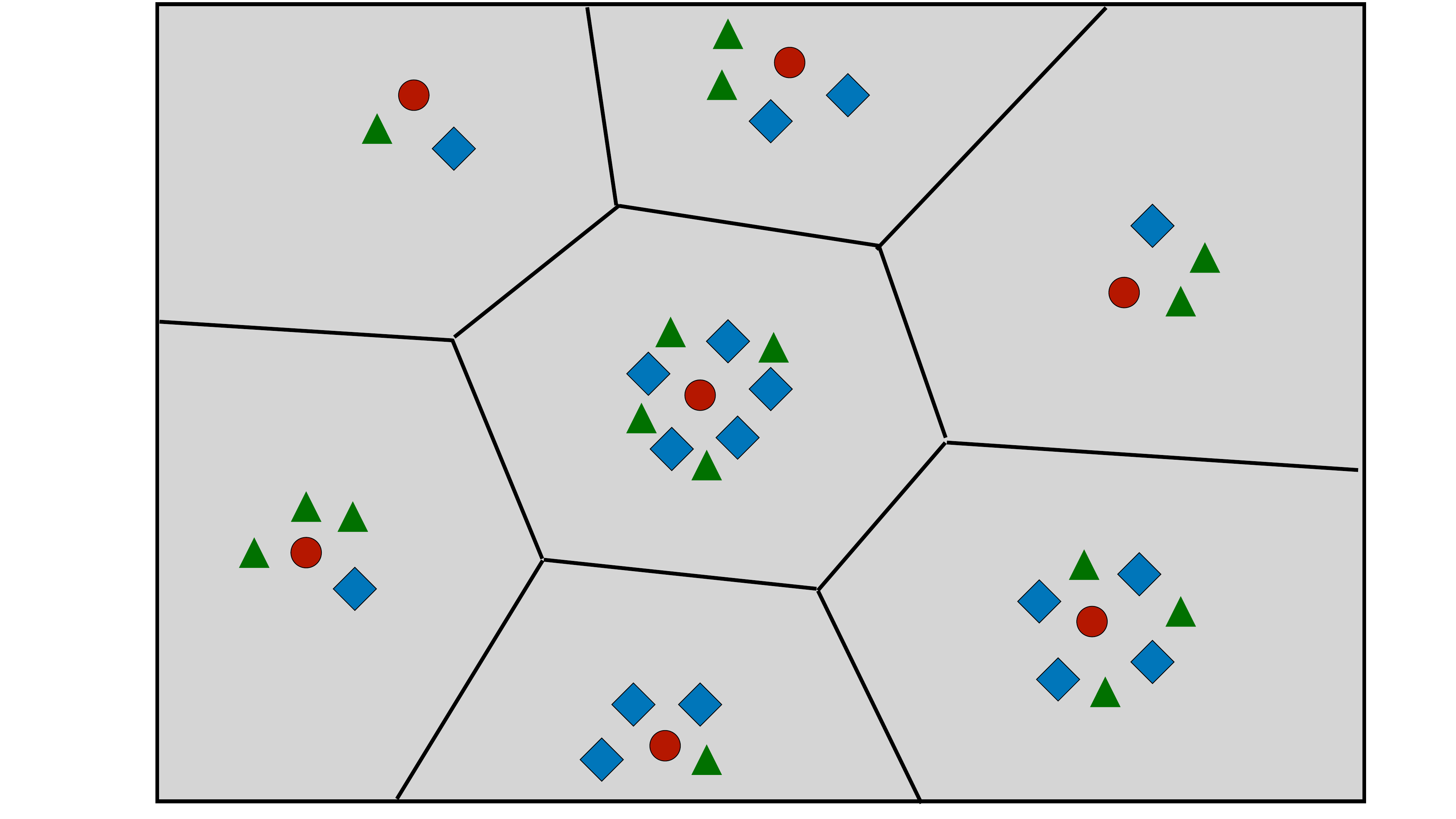} \\
        (a) & (b)
    \end{tabular}
    \caption{\footnotesize{(a) Illustration of the Voronoi cells generated by the atoms of the true mixing measure $G_{0}$ (red  points), and of the  convergence rates of the fitted atoms of the (possibly penalized) MLE (blue  points), under the pointwise setting. The cardinality of each Voronoi cell
    is the number of atoms of the MLE in these cells. The atoms and mixing weights of the MLE in the Voronoi cells with cardinality one have $n^{-1/2}$ convergence rates,
    where we ignore polylogarithmic factors. When the model is 2-strongly identifiable, the atoms of the MLE in the Voronoi cells with cardinality greater than one converge at the slow rate $n^{-1/4}$, while their mixing weights have $n^{-1/2}$ rates of convergence. Under location-scale Gaussian mixtures, the  location and scale mixing components of the Voronoi cells with $l \geq 2$ elements respectively have convergence rates $n^{-1/2\bar{r}(l)}$ and $n^{-1/\bar{r}(l)}$ while their mixing weights have $n^{-1/2}$ rates of convergence. (b) Illustration of the Voronoi cells generated by the limiting mixing measure $G_{*}$ under the uniform setting of Section~\ref{sec:uniform_settings}.
    The red, blue, and green points respectively
    denote the atoms of the limiting measure $G_*$, 
    the penalized MLE $\widehat G_n$, and
    the varying true mixing measure $G_{0}^{n}$. 
    The atoms in each Voronoi cell with $l \geq 2$ atoms
    of $\widehat G_n$ or $G_0^n$   converge at the rate $n^{-1/2(l - 1)}$.}    \label{fig:illustration_rate}} 
\end{figure*}

%%%%%%%%%%%%%%%%%%%%%%%%%%%%%%%%%%%%%%%%%%%%%%%%%%%%%%%%%%%%%%%%%%%%%%%%%%%%%%%%%%%%%%%%
\subsection{Weakly Identifiable Case: Location-Scale Gaussian Mixtures} 
\label{sec:pointwise_weakly_identifiable}
In this section, we study the convergence rate of the MLE when the model is not strongly identifiable in the second order. Location-scale Gaussian mixtures 
are a popular example of such models, as
a result of the following equation:
\begin{align}
    \frac{\partial^2{f}}{\partial{\mu}\partial\mu^\top}(x| \mu, \Sigma) = 2 \frac{\partial{f}}{\partial{\Sigma}}(x|\mu, \Sigma), \label{eq:Pde_Gaussian}
\end{align}
for all $x \in \bbR^d$ and $\theta=(\mu,\Sigma) \in\Theta$, where $\calF=\{f(\cdot|\theta):\theta\in\Theta\}$ 
denotes the family of location-scale Gaussian densities, with compact parameter space $\Theta \subseteq \bbR^d \times \bbS^{d-1}$. The absence of second order identifiability in location-scale Gaussian mixtures leads to several challenges in studying the convergence rates of the MLE. To simplify our proofs, we will assume that 
all mixing measures have weights
which are lower bounded by some  small  constant $c_{0} > 0$. As a result, 
we only state a convergence rate
for the penalized MLE $\widehat G_n$, 
which indeed lies in the class $\calO_{k,c_0}(\Theta)$
with high probability, by Proposition~\ref{prop:bded_mixing}. 
We would like to remark that constraints on the mixing weights are also assumed in past work 
on convergence rates
for over-specified location-scale Gaussian mixtures~\citep{ho2016convergence}, and are not
a byproduct of our choice of loss function.  

Proposition 2.2 in~\citet{ho2016convergence},
together with Theorem~\ref{thm:density_estimation_rate_penalized}
and Proposition~\ref{prop:bded_mixing},
may be used to establish the following bound, for some constant
$C(G_0) > 0$,
\begin{align*}
\bbE \big[W_{\overline{r}(k - k_{0}+1)} (\widehat{G}_{n}, G_{0})\big] \leq C(G_0) \left(\frac{\log n}{\sqrt n}\right)^{\frac 1 {\overline{r}(k - k_{0} + 1)}},
\end{align*}
where for any $k' \geq 2$, $\overline{r}(k')$ is 
defined as the smallest integer $r$ such that the system of polynomial equations
\begin{eqnarray}
\sum \limits_{j=1}^{k'} \sum \limits_{n_{1}, n_{2}} \dfrac{c_{j}^{2}a_{j}^{n_{1}}b_{j}^{n_{2}}}{n_{1}!n_{2}!} = 0,~~ \ \text{for each} \ \alpha=1,\ldots,r \label{eqn:system_polynomial_Gaussian_first}
\end{eqnarray}
does not have any nontrivial solution for the unknown variables $(a_{j},b_{j},c_{j})_{j=1}^{k'}\subseteq\bbR$. The range of $(n_{1},n_{2})$ in the second sum consist of all natural pairs satisfying the equation $n_{1}+2n_{2}=\alpha$. A solution to the above system is considered nontrivial if all variables $c_{j}$ are non-zero, while at least one of the $a_{j}$ is non-zero.
For example, it was shown by~\citet{ho2016EJS}
that $\rbar(2) = 4$ and $\rbar(3) = 6$. 

The convergence rate $(\log n/\sqrt  n)^{1/\overline{r}(k - k_{0} + 1)}$ of $\widehat G_n$  indicates that the location and scale parameters of the penalized MLE converge to
their population counterparts at this same slow rate.
As before, this result does not precisely reflect the  behavior of individual  parameters in  location-scale Gaussian mixtures, leading us to consider
a stronger loss function than the Wasserstein distance.
Given $G = \sum_{i = 1}^{k'} p_{i} \delta_{(\mu_{i}, \Sigma_{i})} \in \calE_{k'}(\Theta)$ for $k' \leq k$,
define the Voronoi cells $\mathcal{A}_{j} = \mathcal{A}_{j}(G) = \{i \in [k']: \|\mu_{i} - \mu_{j}^{0}\| + \| \Sigma_{i} - \Sigma_{j}^{0}\|  \leq \|\mu_{i} - \mu_{\ell}^{0}\| + \| \Sigma_{i} - \Sigma_{\ell}^{0}\| \ \forall \ell \neq j \}$,
for $j \in [k_0]$, and set
\begin{align}
     \widebar{\mathcal{D}}(G, G_{0}) \nonumber 
    & := \sum_{j: |\mathcal{A}_{j}| = 1} \sum_{i \in \mathcal{A}_{j}} p_{i} \left(\|\mu_{i} - \mu_{j}^{0}\| + \|\Sigma_{i} - \Sigma_{j}^{0}\| \right) \\
    &+ \sum_{j: |\mathcal{A}_{j}| > 1} \sum_{i \in \mathcal{A}_{j}} p_{i} \left( \|\mu_{i} - \mu_{j}^{0}\|^{\bar{r}(|\mathcal{A}_{j}|)} + \|\Sigma_{i} - \Sigma_j^{0}\|^{\frac{\rbar(|\calA_j|)}{2}}
    \right)\nonumber  
    % & \hspace{18.5 em} 
    \nonumber + \sum_{j = 1}^{k_{0}} \left|\sum_{i \in \mathcal{A}_{j}} p_{i} - p_{j}^{0}\right|.
    % \label{eq:discrepancy_Gaussian}
\end{align}
It can be shown that $\widebar{\mathcal{D}} \gtrsim W_{\overline{r}(k - k_{0}+1)}^{{\widebar{r}(k - k_{0}+1)}}$ and 
$$\sup_{\substack{G \neq G_0
\\G \in \mathcal{O}_{k}(\Theta) }} \widebar{\mathcal{D}}(G, G_{0})/ W_{\overline{r}(k - k_{0}+1)}^{{\widebar{r}(k - k_{0}+1)}}(G, G_{0}) = \infty.$$ The proof is similar to that of Lemma~\ref{lemma:relation_strong_identifiability} in Appendix~\ref{sec:auxiliary}; therefore, it is omitted. We deduce that
$\widebar{\mathcal{D}}$ is a stronger loss function than $W_{\overline{r}(k - k_{0}+1)}^{{\overline{r}(k - k_{0}+1)}}$. 
We bound the risk of the penalized MLE under
$\widebar\calD$ as follows.
\begin{theorem}
\label{theorem:rate_MLE_Gaussian}
Let $\calF$ denote the location-scale Gaussian
density family with parameter
space taking the form $\Theta = [-a,a]^d \times\Omega$,
where $a > 0$ and $\Omega$
is a compact subset of $~\bbS^{d-1}$  whose
eigenvalues lie in a closed interval 
contained in $(0,\infty)$. Then, there exists a constant $C(G_0) > 0$, depending only on $G_0,k,d,\Theta$, such that
\begin{align*}
\bbE\left[ \widebar{\mathcal{D}}(\widehat{G}_{n},G_{0})\right] 
\leq C(G_0) \frac{\log n}{\sqrt n}.
\end{align*}
\end{theorem}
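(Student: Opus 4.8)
The plan is to follow the blueprint of Theorem~\ref{theorem:rate_MLE_strong_identifiability}: first establish a lower bound for the Hellinger distance of the form
\begin{align*}
\widebar{\calD}(G,G_0) \leq C(G_0)\, h(p_G,p_{G_0}), \qquad G \in \calO_{k,c_0}(\Theta),
\end{align*}
and then combine it with the density estimation rate of Theorem~\ref{thm:density_estimation_rate_penalized}(ii). Concretely, $\widebar{\calD}$ is bounded on the compact set $\calO_k(\Theta)$, and, by Proposition~\ref{prop:bded_mixing} applied with the constant $c_0$ from the displayed inequality, $\hat G_n \in \calO_{k,c_0}(\Theta)$ with probability at least $1-c/n$; splitting the expectation on this event and its complement then gives
\begin{align*}
\bbE\big[\widebar{\calD}(\hat G_n, G_0)\big] \lesssim \bbE\big[h(p_{\hat G_n}, p_{G_0})\big] + \frac 1 n \lesssim \frac{\log n}{\sqrt n},
\end{align*}
which is the claimed bound. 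Thus the whole difficulty is concentrated in the displayed inequality.

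To prove it, I would first reduce to a Wasserstein neighbourhood of $G_0$: since $p_G$ is identifiable in $G$ and $\Theta$ is compact, $\inf\{h(p_G,p_{G_0}) : W_1(G,G_0)\geq \epsilon,\ G\in\calO_k(\Theta)\}>0$ for every $\epsilon>0$, while $\widebar{\calD}$ stays bounded, so the inequality holds trivially (with a larger constant) outside any fixed ball. It therefore suffices to show
\begin{align*}
\liminf_{\epsilon \downarrow 0}\ \inf_{\substack{G \in \calO_{k,c_0}(\Theta)\\ 0 < W_1(G,G_0) \leq \epsilon}} \frac{h(p_G,p_{G_0})}{\widebar{\calD}(G,G_0)} > 0.
\end{align*}
Assuming the contrary gives a sequence $G_m \to G_0$ in $W_1$ with $h(p_{G_m},p_{G_0})/\widebar{\calD}(G_m,G_0)\to 0$; passing to a subsequence I may assume the Voronoi partition $\calA_j(G_m)$ is independent of $m$, each atom converges to its assigned $\theta_j^0$, and the normalized Taylor coefficients introduced below converge. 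The cells with $|\calA_j|=1$ are handled exactly as in the $2$-strongly identifiable case: a first-order expansion of $p_{G_m}-p_{G_0}$ together with the linear independence of $\{f(\cdot|\theta_j^0),\partial_\mu f(\cdot|\theta_j^0),\partial_\Sigma f(\cdot|\theta_j^0)\}_j$ forces the corresponding first-order terms, hence $p_i^m(\|\mu_i^m-\mu_j^0\|+\|\Sigma_i^m-\Sigma_j^0\|)$ and $|\sum_{i\in\calA_j}p_i^m-p_j^0|$, to be dominated by $h(p_{G_m},p_{G_0})$.

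The crux is a cell with $|\calA_j|=l\geq 2$. Here I would Taylor-expand $f(x|\mu_i^m,\Sigma_i^m)$ about $(\mu_j^0,\Sigma_j^0)$ up to order $\rbar(l)$, using the identity~\eqref{eq:Pde_Gaussian} to collapse each pair of location derivatives into one scale derivative; this writes the contribution of cell $j$ to $p_{G_m}-p_{G_0}$ as a linear combination of the distinct derivatives $\partial_\mu^{n_1}\partial_\Sigma^{n_2} f(\cdot|\theta_j^0)$ with $n_1+2n_2\leq \rbar(l)$, whose coefficients are polynomials in $(\mu_i^m-\mu_j^0,\ \Sigma_i^m-\Sigma_j^0,\ p_i^m)$ of precisely the shape appearing in~\eqref{eqn:system_polynomial_Gaussian_first}. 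Normalizing by a maximal coefficient $M_m$, chosen so that $M_m\asymp \sum_{i\in\calA_j}p_i^m(\|\mu_i^m-\mu_j^0\|^{\rbar(l)}+\|\Sigma_i^m-\Sigma_j^0\|^{\rbar(l)/2})$, i.e.\ the cell-$j$ term of $\widebar{\calD}$, and invoking linear independence of these derivatives across $j$ and across differentiation orders, the hypothesis $h/\widebar{\calD}\to 0$ forces the limiting normalized coefficients to satisfy~\eqref{eqn:system_polynomial_Gaussian_first} for every $\alpha=1,\dots,\rbar(l)$; bookkeeping of which coefficient attains the maximum, together with the weight lower bound $c_0$, shows this limiting solution is nontrivial (some $a_j\neq 0$, all $c_j\neq 0$), contradicting the minimality of $\rbar(l)$. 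Summing the resulting cell-wise bounds and the weight-discrepancy term yields $\widebar{\calD}(G_m,G_0)\lesssim h(p_{G_m},p_{G_0})$, the desired contradiction.

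The main obstacle I anticipate is exactly this normalization: ensuring the extracted polynomial solution is genuinely nontrivial and doing so uniformly over the cell structure. One must keep the location and scale perturbations separate — their intrinsic degrees are $1$ and $2$, which is why $\|\Sigma_i-\Sigma_j^0\|$ enters $\widebar{\calD}$ with exponent $\rbar(l)/2$ — and rule out degenerate regimes where all location perturbations vanish strictly faster than the scale perturbations (there one falls back on a pure-scale, strongly identifiable argument). Verifying the requisite linear independence of the mixed derivatives $\partial_\mu^{n_1}\partial_\Sigma^{n_2} f(\cdot|\theta)$ for the Gaussian family up to order $\rbar(k-k_0+1)$, and patching the local estimates with the weight term into the single inequality $\widebar{\calD}(G,G_0)\leq C(G_0)h(p_G,p_{G_0})$, are the remaining technical steps; these follow the now-standard arguments of~\cite{ho2016convergence,ho2016EJS}.
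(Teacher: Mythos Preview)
Your proposal is correct and follows essentially the same route as the paper's proof: reduce to the key inequality $\widebar\calD(G,G_0)\lesssim h(p_G,p_{G_0})$ for $G\in\calO_{k,c_0}(\Theta)$ (the paper's Lemma~\ref{lemma:key_bound_Gaussian}, stated there with $V$ in place of $h$), localize by compactness and identifiability, argue by contradiction through a Taylor expansion in each Voronoi cell to order $\rbar(|\calA_j|)$, and exploit the PDE~\eqref{eq:Pde_Gaussian} together with the definition of $\rbar$ to derive a nontrivial solution of the polynomial system, contradicting minimality. The only cosmetic differences are that the paper converts $\Sigma$-derivatives \emph{into} $\mu$-derivatives (then invokes strong identifiability of the \emph{location}-Gaussian family) rather than the reverse direction you describe, and it treats the diagonal and off-diagonal entries of $\Sigma-\Sigma_j^0$ in separate cases when $d\geq 2$.
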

\noindent 
The proof of Theorem~\ref{theorem:rate_MLE_Gaussian}
appears in Appendix~\ref{subsec:proof:theorem:rate_MLE_Gaussian}. Recall that $\widehat{G}_{n} = \sum_{i = 1}^{\hat k_{n}} \widehat{p}_{i}^{n} \delta_{(\widehat{\mu}_{i}^{n}, \widehat{\Sigma}_{i}^{n})}$, and write $\mathcal{A}_{j}^{n} = \mathcal{A}_{j}(\widehat{G}_{n})$ for all $j \in[k_0]$. 
% The convergence rate of $\widehat{G}_{n}$ in 
Theorem~\ref{theorem:rate_MLE_Gaussian} 
implies the following.

\begin{enumerate} 
\item[(i)] Given $j \in [k_{0}]$ such 
that $|\mathcal{A}_{j}^{n}| \geq 2$,
we have, with probability tending to one,
\begin{align*} 
\| \widehat{\mu}_{i}^{n} - \mu_{j}^{0}\| &\lesssim (\log n/ \sqrt n)^{1/\bar{r}(|\mathcal{A}_{j}^{n}|)}, \quad \text{and,}
\quad \|\widehat{\Sigma}_{i}^{n} - \Sigma_{j}^{0}\|  \lesssim (\log n/\sqrt  n)^{2/\bar{r}(|\mathcal{A}_{j}^{n}|)},~~  i \in \calA_j^n.
\end{align*}
% where we emphasize again that the
% implicit constants in the above
% display depend on $G_0$. 
In particular, the location parameters 
of $\widehat G_n$ converge quadratically
slower than the scale parameters.
 
\item[(ii)] On the other hand, for any index $j \in [k_{0}]$ such that $|\mathcal{A}_{j}^{n}| = 1$ and for any $i \in \mathcal{A}_{j}^{n}$, we have with probability tending to one,
\begin{align} 
\label{eq:fast_rate_location_scale} 
\| \widehat{\mu}_{i}^{n} - \mu_{j}^{0}\| 
\vee \|\widehat{\Sigma}_{i}^{n} - \Sigma_{j}^{0}\| \lesssim \log n/ \sqrt n.
\end{align}
Hence, both location and scale
parameters of $\widehat G_n$
achieve the standard parametric rate
up to a logarithmic factor. We refer to Figure~\ref{fig:illustration_rate}(a) for an illustration of these convergence rates.
 
\item[(iii)] Notice that
$|\mathcal{A}_{j}^{n}| \leq \hat k_{n} - k_{0} + 1$
for all $j \in [k_0]$. When equality is achieved
for some $j$, there must
be a single Voronoi cell
with $\hat k_{n} - k_{0} + 1$ elements, while the remaining cells each
have exactly one component. 
In this case, there are $k_{0} - 1$ components of the penalized MLE which achieve the fast pointwise rate~\eqref{eq:fast_rate_location_scale}. 

\item[(iv)] When $k = k_{0} + 1$, there exists a unique index $j$ such that $\mathcal{A}_j^{n}$ has at most two components, while the remaining Voronoi cells have exactly one component. Since $\overline{r}(2) = 4$, this demonstrates that 
the two components having indices in $\mathcal{A}_{j}$ 
have means converging at the slow rate $n^{-1/8}$,
and covariances converging at the rate $n^{-1/4}$, up to polylogarithmic factors.
These particular
rates were already anticipated
by the work of~\citet{chen2003} 
when $k_0 = 1$. 
When $k_0 > 1$, our work shows that the remaining $k_0-1$ atoms of the penalized MLE converge at the fast rate~\eqref{eq:fast_rate_location_scale}. 

\item[(v)] When $k = k_{0} + 2$, there are two possible cases: 
either (a)
there exists a unique index $j'$ such that $\mathcal{A}_{j'}^{n}$ has at most three components while the remaining sets have exactly one component, or (b) 
 there exist indices $j'_{1}$ and $j_{2}'$ such that $\mathcal{A}_{j_{1}'}^{n}$ and $\mathcal{A}_{j_{2}'}^{n}$ have at most two components while the remaining sets have exactly one component. Under case (a), since $\overline{r}(3) = 6$, 
the means with indices in $\mathcal{A}_{j'}^{n}$ 
converge at the rate $(\log n/ n)^{1/12}$ while the remaining atoms of $\widehat{G}_{n}$ converge at the parametric rate. Under case (b), the means with indices in $\mathcal{A}_{j_{1}'}^{n}\cup\mathcal{A}_{j_{2}'}^{n}$   converge at the $(\log n/ n)^{1/8}$ rate while the remaining atoms converge at the  rate $(\log n/ n)^{1/2}$.
\end{enumerate}
Finally, similarly to the loss function $\mathcal{D}$ in equation~\eqref{eq:first_discrepancy}, 
we note that $\widebar{\mathcal{D}}(G,G_0)$ 
can be computed in $O(k \times k_{0})$ time
for any given
$G \in \calO_k(\Theta)$, 
and thus enjoys a computational advantage
over the Wasserstein metric.

\section{Uniform Convergence Rates of the MLE}
\label{sec:uniform_settings}
Thus far, we have derived pointwise convergence 
rates for the MLE or penalized MLE, 
which depend on the fixed mixing measure $G_0$. 
We next consider uniform rates of convergence, 
in which we allow the true mixing measure $G_{0}\equiv G_0^n \in \calE_{k_0}(\Theta)$ to
vary with the sample size~$n$, while converging
to some limiting mixing measure $G_{*} = \sum_{i = 1}^{k_{*}} p_{i}^{*} \delta_{\theta_{i}^{*}}\in \calE_{k_*}(\Theta)$,
of order $k_{*} \leq k_{0} \leq k$.
% Furthermore, we assume that the mixing weights of $G_{0}^{n}$ are bounded below by some small constant $\bar{c}_{0}$. 
% Therefore, throughout this section, we assume that the mixing weights of $G_{0}^{n}$ are lower bounded by some sufficiently small and fixed constants $c_{0} > 0$. 
To simplify our proofs, 
we will assume throughout this section that~$\Theta\subseteq \mathbb{R}$.

It is known that the optimal pointwise rate of 
estimation in a strongly identifiable
mixture differs from the optimal uniform rate. 
Indeed, when $\calF$ is $(k + k_{0})$-strongly identifiable
it can be inferred from Theorem 6.3 in~\citep{heinrich2018} that, 
\begin{align}
\label{eq:heinrich_kahn_rate}
\bbE \big[W_r(\widebar{G}_{n}, G_{0}^{n})\big] \lesssim \left(\frac{\log n}{ n}\right)^{1/2r},
\end{align}
where we fix $r = k+k_{0} - 2k_{*}+1$
throughout the remainder of this section.
Furthermore, the above rate is minimax optimal 
up to a polylogarithmic factor,
but is markedly slower than its pointwise
analogue discussed in Section~\ref{sec:pointwise_strongly_identifiable}. 
It implies that the atoms of $\widebar{G}_{n}$ with nonvanishing weights tend to those of $G_{0}^{n}$ at this same slow rate. 
In contrast, we will show that the uniform convergence
rates of individual components of the MLE can
be sharpened. 
Similarly to the previous subsection, however, our
results will rely on the additional condition that the mixing proportions 
of $G_0^n, G_*$ are uniformly bounded below by a small constant $c_0 > 0$.
While this condition was not needed in the work of~\citet{heinrich2018}, 
we require it for our proof technique. As a result, 
we focus on deriving convergence rates for the penalized
MLE~$\widehat G_n$. 

Given $k'\in[k]$, let $G = \sum_{i = 1}^{k'} p_{i} \delta_{\theta_{i}} \in \calE_{k'}(\Theta)$ 
and $G' = \sum_{i = 1}^{k_0} p_{i}' \delta_{\theta_{i}'}\in \calE_{k_0}(\Theta)$.
We again partition the supports of these measures
into Voronoi cells, which are now generated
by the atoms of the measure $G_*$ rather than $G_0^n$:  
$$\mathcal{A}_{j}(G) = 
  \big\{i \in [k']: |\theta_{i} - \theta_{j}^{*}| \leq |\theta_{i} - \theta_{\ell}^{*}| \ \forall \ell \neq j
  \big\},$$
for all $j \in [k_{*}]$. 
With this notation in place, we define the following 
loss function over $\calO_k(\Theta)$, 
\begin{align}
    & \widetilde{W}(G, G')   = \inf_{\boldsymbol{q} \in \Pi(G, G')} \left\{\sum_{l = 1}^{k_{*}} \sum_{(i, j) \in \mathcal{A}_{l}(G) \times \mathcal{A}_{l}(G')} q_{ij} |\theta_{i} - \theta_{j}'|^{|\mathcal{A}_{l}(G)| + |\mathcal{A}_{l}(G')|  - 1}  + \sum_{(i, j) \not \in \cup_{l = 1}^{k_{*}} \mathcal{A}_{l}(G) \times \mathcal{A}_{l}(G')} q_{ij} \right\}. \label{eq:uniform_metric}
\end{align}
$\widetilde W$ may be viewed as a generalized optimal transport cost, 
whose ground cost depends on the measures $G,G'$ via
the exponent $|\mathcal{A}_{l}(G)| + |\mathcal{A}_{l}(G')|  - 1$.
In the special case where $k_*=1$, 
this exponent is given by $k+k_0-1$, and $\widetilde W$
is then equal to $W_r^r$.
On the other hand, when $k_* > 1$, 
it can be seen similarly as in previous subsections that,
\begin{align}
\label{eq:si_uniform_metric_inequality}
\widetilde{W} %\gtrsim W_{q}^{q}(G, G')
    \gtrsim W_r^r, ~~ \text{and } ~  \sup_{\substack{G\neq G'}}
\frac{\widetilde{W}(G, G')}{W_r^r(G, G')} = \infty.
\end{align}
% where $q = \max_{1 \leq j \leq k_{0}} \left\{|A_{j}(G)| + |A_{j}(G')| \right\} - 1 \leq k + k_{0} - 2 k_{*} + 1$.
% It is due to the fact that $\min_{1 \leq j \leq k_{0}} \{|A_{j}(G)|, |A_{j}(G')|\} \geq 1$ for all $j \in [k_{0}]$.
Therefore, the loss function $\widetilde{W}$ is stronger than the Wasserstein distances used by~\citet{heinrich2018}. 
The main result of this section is the following
convergence rate under $\widetilde W$. 
\begin{theorem}
\label{theorem:uniform_rates_strong_identifiable}
Let $k \geq k_0 \geq k_*$ and $c_0 > 0$.
Assume 
that $G_* \in \calE_{k_*,c_0}(\Theta)$
and $G_0^n \in \calE_{k_0,c_0}(\Theta)$
for all $n \geq 1$. Furthermore, assume that $\calF$ is $(k + k_{0})$-strongly identifiable,
and satisfies conditions~\hyperref[assm:holder]{\textbf{A($k+k_0$)}} 
and~\ref{assm:bracket}. Then, there exist constants
$C,\epsilon > 0$, depending only on $\calF,k,c_0$, 
such that for all $n\geq 1$ satisfying $\widetilde W(G_0^n, G_*) 
\leq \epsilon$, we have
\begin{align*}
  \bbE \Big[\widetilde{W}(\widehat{G}_{n}, G_{0}^{n})\Big] \leq C \frac{\log n}{\sqrt n}. 
\end{align*}
\end{theorem}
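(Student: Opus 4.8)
I follow the two-step scheme behind Theorem~\ref{theorem:rate_MLE_strong_identifiability}: first reduce the bound to a local comparison between $\widetilde W$ and the Hellinger distance between densities, then feed in the density-estimation rate of Theorem~\ref{thm:density_estimation_rate_penalized}(ii). The two facts I would establish, with constants depending only on $\calF,k,c_0$, are: \emph{(Local bound)} there exist $C_0,\epsilon_0>0$ so that $\widetilde W(G,G_0^n)\le C_0\,h(p_G,p_{G_0^n})$ whenever $G\in\calO_{k,c_0}(\Theta)$ satisfies $\widetilde W(G,G_0^n)\le\epsilon_0$ and $\widetilde W(G_0^n,G_*)\le\epsilon_0$; and \emph{(Global bound)} there exists $\delta_0>0$ so that $h(p_G,p_{G_0^n})\ge\delta_0$ whenever $G\in\calO_{k,c_0}(\Theta)$ satisfies $\widetilde W(G,G_0^n)>\epsilon_0$ and $\widetilde W(G_0^n,G_*)\le\epsilon_0$. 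Granting these (and taking the $\epsilon$ of the theorem to be $\epsilon_0$), the theorem follows quickly. By Proposition~\ref{prop:bded_mixing} there is a constant $c_0'\in(0,c_0)$ so that the event $E_n=\{\widehat G_n\in\calO_{k,c_0'}(\Theta)\}$ has probability at least $1-c/n$; applying the local and global bounds with $c_0$ replaced by $c_0'$, and splitting $\bbE[\widetilde W(\widehat G_n,G_0^n)]$ over $E_n\cap\{\widetilde W(\widehat G_n,G_0^n)\le\epsilon_0\}$, over $E_n\cap\{\widetilde W(\widehat G_n,G_0^n)>\epsilon_0\}$, and over $E_n^c$, we bound the first piece by $C_0\,\bbE[h(p_{\widehat G_n},p_{G_0^n})]\lesssim\log n/\sqrt n$ via Theorem~\ref{thm:density_estimation_rate_penalized}(ii), the second by noting that the global bound forces $h(p_{\widehat G_n},p_{G_0^n})\ge\delta_0$ there, so Markov's inequality and Theorem~\ref{thm:density_estimation_rate_penalized}(ii) bound its probability by $\lesssim\log n/\sqrt n$, and the third by $\bbP(E_n^c)\le c/n$; since $\widetilde W$ is bounded on $\calO_k(\Theta)\times\calO_k(\Theta)$ by compactness of $\Theta$, the last two pieces contribute $\lesssim\log n/\sqrt n$.

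\textbf{Proof of the two comparison bounds.} Both follow from a single compactness-and-contradiction argument in the spirit of~\cite{heinrich2018,ho2016EJS}, adapted to the triangular-array regime in which $G_0^n$ varies with $n$. For the local bound, suppose it fails; then there are indices $n_m$ (with $\widetilde W(G_0^{n_m},G_*)\le\epsilon_0$) and $G_m\in\calO_{k,c_0}(\Theta)$ with $\widetilde W(G_m,G_0^{n_m})\le\epsilon_0$ such that $\widetilde W(G_m,G_0^{n_m})/h(p_{G_m},p_{G_0^{n_m}})\to\infty$. Passing to a subsequence, $G_m\to\bar G$ and $G_0^{n_m}\to G_*$ in $W_1$; since the weights of $G_m$ and $G_0^{n_m}$ are bounded below by $c_0$, all atoms of $\bar G$ and of $G_0^{n_m}$ (for $m$ large) lie near the atoms $\theta_1^*,\dots,\theta_{k_*}^*$ of $G_*$, so the Voronoi partitions in~\eqref{eq:uniform_metric} stabilize along the sequence. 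Because the blow-up of the ratio forces $h(p_{G_m},p_{G_0^{n_m}})\to0$, and $h(p_{G_0^{n_m}},p_{G_*})\to0$, we get $p_{\bar G}=p_{G_*}$, hence $\bar G=G_*$ by the first-order identifiability implied by strong identifiability. Consequently $\widetilde W(G_m,G_0^{n_m})\to0$ as well, using the elementary fact that $\widetilde W(G,G')\to0$ whenever $G$ and $G'$ both tend, in $W_1$, to a common mixing measure while keeping weights bounded below (the ground cost in~\eqref{eq:uniform_metric} has exponent $\ge1$). The global bound is handled by rerunning exactly the same dichotomy.

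\textbf{The local reduction: the crux.} We are thus reduced to the situation $G_m\to G_*$, $G_0^{n_m}\to G_*$, and $h(p_{G_m},p_{G_0^{n_m}})=o\bigl(\widetilde W(G_m,G_0^{n_m})\bigr)$, and we must reach a contradiction. Grouping the atoms of $G_m=\sum_i p_i^{(m)}\delta_{\theta_i^{(m)}}$ and of $G_0^{n_m}=\sum_j q_j^{(m)}\delta_{\phi_j^{(m)}}$ by the Voronoi cells $\mathcal{A}_l(G_m),\mathcal{A}_l(G_0^{n_m})$ generated by $\theta_l^*$, and Taylor-expanding $p_{G_m}-p_{G_0^{n_m}}$ about each $\theta_l^*$ to order $r=k+k_0-2k_*+1$ (with remainders controlled uniformly by condition~\hyperref[assm:holder]{\textbf{A($k+k_0$)}}), we obtain $p_{G_m}(x)-p_{G_0^{n_m}}(x)=\sum_{l=1}^{k_*}\sum_{a=0}^{r}\beta_{a,m}^{(l)}\,\frac{\partial^a f}{\partial\theta^a}(x|\theta_l^*)+R_m(x)$, where $\beta_{a,m}^{(l)}=\frac{1}{a!}\bigl(\sum_{i\in\mathcal{A}_l(G_m)}p_i^{(m)}(\theta_i^{(m)}-\theta_l^*)^a-\sum_{j\in\mathcal{A}_l(G_0^{n_m})}q_j^{(m)}(\phi_j^{(m)}-\theta_l^*)^a\bigr)$ and $R_m$ is of strictly higher order than the leading coefficients. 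Dividing by a normalizing scalar $M_m\downarrow0$ that is comparable to $\widetilde W(G_m,G_0^{n_m})$ and chosen as the largest monomial occurring among the $\beta_{a,m}^{(l)}$, the exponent $|\mathcal{A}_l(G)|+|\mathcal{A}_l(G')|-1$ of~\eqref{eq:uniform_metric} is calibrated precisely so that a Vandermonde-type argument — applied within each cell to the distinct atoms of $G_m$ and $G_0^{n_m}$, and, for the cross-cell mass recorded by the term $\sum q_{ij}$ in~\eqref{eq:uniform_metric}, to the order-$0$ coefficients — guarantees that at least one normalized coefficient $\beta_{a,m}^{(l)}/M_m$ stays bounded away from $0$ while all of them remain bounded. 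Extracting a further subsequence with $\beta_{a,m}^{(l)}/M_m\to\alpha_a^{(l)}$, not all zero, and noting $\|p_{G_m}-p_{G_0^{n_m}}\|_{L^1(\nu)}\lesssim h(p_{G_m},p_{G_0^{n_m}})=o(M_m)$ and $\|R_m\|_{L^1(\nu)}=o(M_m)$, division of the expansion by $M_m$ and passage to the limit yield $\sum_{l=1}^{k_*}\sum_{a=0}^{r}\alpha_a^{(l)}\,\frac{\partial^a f}{\partial\theta^a}(x|\theta_l^*)=0$ for $\nu$-almost every $x$; since $r\le k+k_0$, this contradicts the $(k+k_0)$-strong identifiability of $\calF$.

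\textbf{Main obstacle.} The delicate point is the bookkeeping in the last paragraph: choosing $M_m$ and certifying that the limiting coefficients $\alpha_a^{(l)}$ do not all vanish. This hinges on matching the exponent $|\mathcal{A}_l(G)|+|\mathcal{A}_l(G')|-1$ of~\eqref{eq:uniform_metric} to the degrees produced by the Taylor expansion inside each Voronoi cell and on the nonsingularity of the associated (generalized Vandermonde) systems — which is exactly why the loss takes that form. The remaining ingredients — the reduction of the first paragraph, the global/non-local dichotomy, and the uniform remainder control afforded by~\hyperref[assm:holder]{\textbf{A($k+k_0$)}} — are routine given the results already in hand.
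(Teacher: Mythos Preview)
Your overall architecture matches the paper's exactly: reduce to a local comparison inequality $\widetilde W(G,G_0^n)\lesssim\text{(statistical distance)}(p_G,p_{G_0^n})$ on a neighborhood of $G_*$, prove it by contradiction via a Taylor expansion around the atoms of $G_*$, normalize, and invoke $(k+k_0)$-strong identifiability; then combine with Proposition~\ref{prop:bded_mixing} and Theorem~\ref{thm:density_estimation_rate_penalized}(ii). The minor cosmetic difference is that the paper works with the Kolmogorov--Smirnov distance $\|F(\cdot,G)-F(\cdot,G')\|_\infty$ (following~\cite{heinrich2018}) rather than Hellinger; since this is dominated by $h$, your version is equally valid.

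The point where your sketch and the paper's proof genuinely diverge is precisely the step you flag as the main obstacle. You propose to show $M_m\asymp\widetilde W(G_m,G_0^{n_m})$ by ``a Vandermonde-type argument applied within each cell.'' The paper does \emph{not} do this directly; instead it imports the full multi-scale coarse-graining tree of~\cite{heinrich2018} (their Lemma~7.1, Definition~7.2, Lemma~7.4, Lemma~B.2). Concretely, the paper's Lemma~\ref{lem:equivalence_w_tilde} proves the asymptotic equivalence
\[
\widetilde W(G_n,G_n')\;\asymp\;\max\Big\{\max_{1\le l\le k_*}\max_{J\in\mathrm{Desc}(\calJ_l)}|\pi_J|\,\tau_{J^\uparrow}^{|\calA_l|+|\calB_l|-1},\ \max_{1\le l\le k_*}|\pi_{\calJ_l}|\Big\},
\]
and then combines this with Heinrich--Kahn's Lemma~7.4 (quoted as Lemma~\ref{lem:uniform_expansion}) to show that the leading Taylor coefficient dominates $\widetilde W$. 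The reason a single-scale Vandermonde argument is insufficient is that, within a given Voronoi cell, the atoms of $G_m$ and $G_0^{n_m}$ may themselves cluster at several incommensurable scales $\tau_1(n)\ll\tau_2(n)\ll\cdots$; the associated Vandermonde system is then degenerate in the limit, and one must peel the scales off recursively via the tree. So your instinct is right, but the bookkeeping you anticipate is not ad hoc --- it is exactly the Heinrich--Kahn tree, and the new content in the paper is Lemma~\ref{lem:equivalence_w_tilde}, which adapts that machinery from $W_r^r$ to the heterogeneous exponents of $\widetilde W$.
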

In view of equation~\eqref{eq:si_uniform_metric_inequality}
and the existing minimax lower
bound of~\citet{heinrich2018} under the Wasserstein distance, 
it can immediately be deduced that the convergence
rate in Theorem~\ref{theorem:uniform_rates_strong_identifiable}
is minimax optimal, up to a logarithmic factor. 

The proof of Theorem~\ref{theorem:uniform_rates_strong_identifiable} 
appears in Appendix~\ref{subsec:proof:theorem:uniform_rates_strong_identifiable}. Our main technical contribution is Lemma~\ref{lemma:uniform_bound}
therein,
which provides an upper bound on $\widetilde W(G, G_0)$ in terms
of the Kolmogorov-Smirnov distance between the distributions of $p_G$
and $p_{G_0}$. Similarly to~\cite{heinrich2018}, 
we derive our upper bound by placing the atoms of $G$ and $G_0$
into an ultrametric tree, and using it
to construct a nearly optimal coupling $\bq$ in the definition of $\widetilde W$. These derivations are facilitated by the assumption $\Theta \subseteq \bbR$, but we expect that similar conclusions also hold for  
strongly identifiable families with multidimensional parameter spaces. 

Theorem~\ref{theorem:uniform_rates_strong_identifiable}
may be interpreted similarly
as in previous sections, thus we only provide
an example. In the sequel, 
we ignore polylogarithmic factors.
For all $l \in [k_*]$,  notice that
\begin{equation} 
\label{eq:uniform_cardinalty_ineq_example}
|\mathcal{A}_{l}(\widehat{G}_{n})| \leq k - k_{*} + 1,\quad |A_{l}(G_{0}^{n})| \leq k_{0} - k_{*} + 1.
\end{equation}
When these inequalities are both  achieved
by the same index $\bar l \in [k_*]$, we find that
for every 
 $i \in \calA_{\bar l}(\hat G_n)$, 
there exists $j \in \calA_{\bar l}(G_0^n)$ such that,
up to taking subsequences, 
the rate of~\citet{heinrich2018} is achieved:
$$|\htheta_i^n - \theta_j^0| \lesssim n^{-\frac{1}{2r}}.$$
However, the remaining $k_*-1$
atoms of the penalized MLE converge
uniformly at the parametric rate $n^{-1/2}$,
which could not have been deduced from equation~\eqref{eq:heinrich_kahn_rate}. 
Furthermore, we emphasize that this setting---in which
all redundant atoms of $\widehat G_n$ and $G_0^n$
are concentrated near a single atom of $G_*$---is
the only case where a subset
of the atoms of $\widehat G_n$
achieve the worst-case rate
predicted by~\citet{heinrich2018}. 
Indeed, when the inequalities~\eqref{eq:uniform_cardinalty_ineq_example}
are strict, 
rates faster than $n^{-1/2r}$ are achieved
by all atoms of~$\widehat G_n$. 
 
\section{Discussion}
The aim of our work has been to sharpen known convergence
rates of the MLE for estimating individual parameters of a finite mixture model. Our key observation was that the Wasserstein distance, despite being an elegant tool for metrizing the space of mixing measures,
is not well-suited to capturing the heterogeneous convergence behaviour
of individual mixture parameters. We
instead proposed new loss functions which achieve this goal. 
Our theoretical results are supported by a simulation study, 
which is deferred to Appendix~\ref{sec:simulations}. 

Our analysis has focused on maximum likelihood-based
estimators, whose computation involves the nonconvex
optimization problem~\eqref{eq:mle}. Despite significant
recent advances in the theoretical understanding
of the EM algorithm for approximating the MLE in finite mixtures  \citep{balakrishnan2017,dwivedi2020, kwon2019global, Raaz_Ho_Koulik_2018_second}, 
we make no claims that such approximations obey the asymptotics
described in this paper, leaving open a potential gap
between theory and practice. 
The method of moments provides
a practical alternative to the MLE, 
which is minimax optimal for certain classes of finite mixture models under the Wasserstein distance~\citep{wu2020a,doss2020}. 
We leave open the question of characterizing the
risk of moment-based estimators under the loss
functions proposed in our work. 

In Section~\ref{sec:uniform_settings}, 
we obtained uniform convergence rates
for strongly identifiable
mixtures with mixing proportions bounded
away from zero. We leave open the question 
of determining whether this constraint can be removed. 

Finally, we derived both pointwise and uniform
convergence rates for strongly identifiable
mixtures, however we restricted our analysis
of location-scale Gaussian mixtures to the pointwise case.
Obtaining 
uniform convergence rates 
for such models
remains an important open problem,
which has not been studied 
beyond the special case of two component models~\citep{hardt2015,manole2020a}.
While this setting is beyond the scope of our work, 
we expect that considerations about the heterogeneity
of parameter estimation, similar
to those studied in this paper, would arise in such models as well. 

\subsection*{Acknowledgements}
TM was supported
in part by the Natural Sciences and Engineering Research Council of Canada, through a PGS D
scholarship. NH gratefully
acknowledges research gifts by UT Austin ML grant.

\bibliographystyle{apalike}
\bibliography{main}

\begin{thebibliography}{}

\bibitem[Ashtiani et~al., 2020]{ashtiani2020}
Ashtiani, H., Ben-David, S., Harvey, N.~J., Liaw, C., Mehrabian, A., and Plan,
  Y. (2020).
\newblock Near-optimal sample complexity bounds for robust learning of
  {{Gaussian}} mixtures via compression schemes.
\newblock {\em Journal of the ACM}, 67(6):1--42.

\bibitem[Balakrishnan et~al., 2017]{balakrishnan2017}
Balakrishnan, S., Wainwright, M.~J., and Yu, B. (2017).
\newblock Statistical guarantees for the {{EM}} algorithm: {{From}} population
  to sample-based analysis.
\newblock {\em The Annals of Statistics}, 45:77--120.

\bibitem[Bechtel et~al., 1993]{bechtel1993}
Bechtel, Y.~C., {Bonaiti-Pellie}, C., Poisson, N., Magnette, J., and Bechtel,
  P.~R. (1993).
\newblock A population and family study {{N-acetyltransferase}} using caffeine
  urinary metabolites.
\newblock {\em Clinical Pharmacology \& Therapeutics}, 54:134--141.

\bibitem[Chen and Chen, 2003]{chen2003}
Chen, H. and Chen, J. (2003).
\newblock Tests for homogeneity in normal mixtures in the presence of a
  structural parameter.
\newblock {\em Statistica Sinica}, pages 351--365.

\bibitem[Chen, 1995]{chen1995}
Chen, J. (1995).
\newblock Optimal rate of convergence for finite mixture models.
\newblock {\em The Annals of Statistics}, 23(1):221--233.

\bibitem[Chen and Kalbfleisch, 1996]{chen1996}
Chen, J. and Kalbfleisch, J.~D. (1996).
\newblock Penalized minimum-distance estimates in finite mixture models.
\newblock {\em Canadian Journal of Statistics}, 24:167--175.

\bibitem[Chen and Khalili, 2008]{chen2008a}
Chen, J. and Khalili, A. (2008).
\newblock Order selection in finite mixture models with a nonsmooth penalty.
\newblock {\em Journal of the American Statistical Association}, 103:187--196.

\bibitem[Dempster et~al., 1977]{dempster1977EM}
Dempster, A.~P., Laird, N.~M., and Rubin, D.~B. (1977).
\newblock Maximum likelihood from incomplete data via the {EM} algorithm.
\newblock {\em Journal of the Royal Statistical Society, Series B
  (Methodological)}, 39(1):1--38.

\bibitem[Doss et~al., 2020]{doss2020}
Doss, N., Wu, Y., Yang, P., and Zhou, H.~H. (2020).
\newblock Optimal estimation of high-dimensional {{Gaussian}} mixtures.
\newblock {\em arXiv preprint arXiv:2002.05818}.

\bibitem[Dwivedi et~al., 2020a]{Raaz_Ho_Koulik_2018_second}
Dwivedi, R., Ho, N., Khamaru, K., Wainwright, M.~J., Jordan, M.~I., and Yu, B.
  (2020a).
\newblock Sharp analysis of {{Expectation-Maximization}} for weakly
  identifiable models.
\newblock In {\em Proceedings of the Twenty Third International Conference on
  Artificial Intelligence and Statistics}, pages 1866--1876. PMLR.

\bibitem[Dwivedi et~al., 2020b]{dwivedi2020}
Dwivedi, R., Ho, N., Khamaru, K., Wainwright, M.~J., Jordan, M.~I., and Yu, B.
  (2020b).
\newblock Singularity, misspecification and the convergence rate of {{EM}}.
\newblock {\em The Annals of Statistics}, 48:3161--3182.

\bibitem[Folland, 1995]{folland1995}
Folland, G.~B. (1995).
\newblock {\em Introduction to {{Partial Differential Equations}}}.
\newblock {Princeton University Press}.

\bibitem[Genovese and Wasserman, 2000]{genovese2000}
Genovese, C.~R. and Wasserman, L. (2000).
\newblock Rates of convergence for the {{Gaussian}} mixture sieve.
\newblock {\em The Annals of Statistics}, 28:1105--1127.

\bibitem[Ghosal and van~der Vaart, 2001]{ghosal2001}
Ghosal, S. and van~der Vaart, A.~W. (2001).
\newblock Entropies and rates of convergence for maximum likelihood and
  {{Bayes}} estimation for mixtures of normal densities.
\newblock {\em The Annals of Statistics}, 29:1233--1263.

\bibitem[Guha et~al., 2021]{guha2021Bernoulli}
Guha, A., Ho, N., and Nguyen, X. (2021).
\newblock On posterior contraction of parameters and interpretability in
  {{Bayesian}} mixture modeling.
\newblock {\em Bernoulli}, 27(4):2159--2188.

\bibitem[Hardt and Price, 2015]{hardt2015}
Hardt, M. and Price, E. (2015).
\newblock Tight bounds for learning a mixture of two {{Gaussians}}.
\newblock In {\em Proceedings of the Forty-Seventh Annual {{ACM}} Symposium on
  {{Theory}} of Computing}, pages 753--760.

\bibitem[Heinrich and Kahn, 2018]{heinrich2018}
Heinrich, P. and Kahn, J. (2018).
\newblock Strong identifiability and optimal minimax rates for finite mixture
  estimation.
\newblock {\em The Annals of Statistics}, 46(6):2844--2870.

\bibitem[Ho and Nguyen, 2016a]{ho2016convergence}
Ho, N. and Nguyen, X. (2016a).
\newblock Convergence rates of parameter estimation for some weakly
  identifiable finite mixtures.
\newblock {\em The Annals of Statistics}, 44(6):2726--2755.

\bibitem[Ho and Nguyen, 2016b]{ho2016EJS}
Ho, N. and Nguyen, X. (2016b).
\newblock On strong identifiability and convergence rates of parameter
  estimation in finite mixtures.
\newblock {\em Electronic Journal of Statistics}, 10:271--307.

\bibitem[Ho and Nguyen, 2019]{Ho_Nguyen_SIMODS}
Ho, N. and Nguyen, X. (2019).
\newblock Singularity structures and impacts on parameter estimation behavior
  in finite mixtures of distributions.
\newblock {\em SIAM Journal on Mathematics of Data Science}, 1:730–758.

\bibitem[Holzmann et~al., 2004]{holzmann2004}
Holzmann, H., Munk, A., and Stratmann, B. (2004).
\newblock Identifiability of finite mixtures---with applications to circular
  distributions.
\newblock {\em Sankhy\=a: The Indian Journal of Statistics}, pages 440--449.

\bibitem[Kuusela et~al., 2012]{kuusela2012}
Kuusela, M., Vatanen, T., Malmi, E., Raiko, T., Aaltonen, T., and Nagai, Y.
  (2012).
\newblock Semi-supervised anomaly detection---towards model-independent
  searches of new physics.
\newblock In {\em Journal of {{Physics}}: {{Conference Series}}}, volume 368.

\bibitem[Kwon et~al., 2019]{kwon2019global}
Kwon, J., Qian, W., Caramanis, C., Chen, Y., and Davis, D. (2019).
\newblock Global convergence of the {EM} algorithm for mixtures of two
  component linear regression.
\newblock In {\em Conference on Learning Theory}, pages 2055--2110.

\bibitem[Manole and Ho, 2020]{manole2020a}
Manole, T. and Ho, N. (2020).
\newblock Uniform convergence rates for maximum likelihood estimation under
  two-component {G}aussian mixture models.
\newblock {\em arXiv preprint arXiv:2006.00704.}

\bibitem[Manole and Khalili, 2021]{manole2021b}
Manole, T. and Khalili, A. (2021).
\newblock Estimating the number of components in finite mixture models via the
  {{Group-Sort-Fuse}} procedure.
\newblock {\em The Annals of Statistics}, 49:3043--3069.

\bibitem[McLachlan and Peel, 2004]{mclachlan2004finite}
McLachlan, G. and Peel, D. (2004).
\newblock {\em Finite mixture models}.
\newblock John Wiley \& Sons.

\bibitem[Nguyen, 2013]{nguyen2013}
Nguyen, X. (2013).
\newblock Convergence of latent mixing measures in finite and infinite mixture
  models.
\newblock {\em The Annals of Statistics}, 4(1):370--400.

\bibitem[Ohn and Lin, 2020]{ohn2020}
Ohn, I. and Lin, L. (2020).
\newblock Optimal {{Bayesian}} estimation of {{Gaussian}} mixtures with growing
  number of components.
\newblock {\em arXiv preprint arXiv:2007.09284}.

\bibitem[Pele and Werman, 2009]{pele2009}
Pele, O. and Werman, M. (2009).
\newblock Fast and robust earth mover's distances.
\newblock In {\em 2009 IEEE 12th International Conference on Computer Vision},
  pages 460--467. IEEE.

\bibitem[van~de Geer, 2000]{vandegeer2000}
van~de Geer, S. (2000).
\newblock {\em Empirical Processes in M-estimation}.
\newblock Cambridge University Press.

\bibitem[Wu and Yang, 2020]{wu2020a}
Wu, Y. and Yang, P. (2020).
\newblock Optimal estimation of {{Gaussian}} mixtures via denoised method of
  moments.
\newblock {\em The Annals of Statistics}, 48:1987--2007.

\end{thebibliography}

%%%%%%%%%%%%%%%%%%%%%%%%%%%%%%%%%%%%%%%%%%%%%%%%%%%%%%%%%%

\begin{appendix}

\begin{center}
{\bf \large Supplement to ``Refined Convergence Rates for Maximum Likelihood Estimation under Finite Mixture Models''}
\end{center}

In this supplementary material, we 
provide all proofs of results stated in the main 
text (Appendix~\ref{sec:proof}). 
We also state and prove certain results 
which were deferred from the main text (Appendix~\ref{sec:auxiliary}), and
provide a simulation study to illustrate
the various convergence rates that were derived
in this paper (Appendix~\ref{sec:simulations}). 
 
\section{Proofs}
\label{sec:proof}
% In this section, we collect the proofs of main results in the paper.

\subsection{Proof of Theorem~\ref{thm:density_estimation_rate_penalized}}
 
Theorem~\ref{thm:density_estimation_rate_penalized}(i)
is an immediate consequence of Theorem 7.4 of~\citet{vandegeer2000}, which provides a generic 
exponential inequality for the Hellinger loss of
nonparametric maximum likelihood density estimators, 
under mere conditions on the bracketing integral
$\calJ_B(\epsilon, \widebar\calP_k^{1/2}(\Theta,\epsilon), \nu)$. 
The application of this result to finite mixture models has
previously
been discussed by~\citet{ho2016EJS,  ho2016convergence}.

Theorem~\ref{thm:density_estimation_rate_penalized}(ii)
also follows by the same proof technique
as Theorem 7.4 of~\citet{vandegeer2000}, 
with modifications to account
for the presence of the penalty
in the definition of $\hat G_n$. 
An analogue of this result
was previously proven by~\citet{manole2021b}, 
though with different conditions on 
the tuning parameter $\xi_n$. 
For completeness, we provide a self-contained
proof of~Theorem~\ref{thm:density_estimation_rate_penalized}(ii), under the conditions on $\xi_n$ required
for our development.

As in~\citet{vandegeer2000}, we shall
reduce the problem to controlling the increments
of the empirical process
$$\nu_n(G) = \sqrt n \int_{\{p_{G_0}>0\}}
\frac 1 2 \log \frac {\bar p_G}{p_{G_0}} ~
d(P_n-P_{G_0}),$$
where we recall that $\bar p_G = (p_G+p_{G_0})/2$, and
we denote by $P_G = \int p_G d\nu$
the distribution induced by $p_G$, for any $G \in \calO_k(\Theta)$. 
Furthermore, $P_n = (1/n)\sum_{i=1}^n \delta_{X_i}$
denotes the empirical measure.
Our main technical tool will be the following
special case of Theorem 5.11 (\citet{vandegeer2000}; see also Lemma 7.2--7.3 therein). 
\begin{theorem}[Theorem 5.11 \citep{vandegeer2000}]
\label{thm:van_de_geer}
Let $R > 0$ and $k \geq 1$. Given 
$\calG \subseteq \calO_{k}(\Theta)$,
let $G_0 \in \calG$.
Furthermore, given a universal constant $C > 0$, let $a, C_1 > 0$ be chosen such that
\begin{equation}
\label{thm511_2}
a \leq C_1\sqrt n R^2 \wedge 8\sqrt n R,
\end{equation}
and,
\begin{equation}
\label{thm511_5}
a \geq\sqrt{C^2(C_1+1)} \left(\int_0^R \sqrt{H_B 
\left(\frac u {\sqrt 2}, \Big\{p_G:G \in \calG, h(\bar p_G, p_0) \leq R\Big\}, \nu \right)} du \vee  R\right) , 
\end{equation}
Then, 
\[
\bbP\left\{\sup_{\substack{G \in \calG \\ h(\bar p_G, p_{G_0}) \leq R}} |\nu_n(G)| \geq a\right\}
 \leq C \exp\left(-\frac{a^2}{C^2(C_1+1)R^2}\right).
 \] 
\end{theorem}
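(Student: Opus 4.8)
The plan is to prove this as a generic maximal inequality for a bracketing-controlled empirical process, by chaining on top of Bernstein's inequality. First I would record two facts about the integrands $g_G := \frac12\log\frac{\bar p_G}{p_{G_0}}$, both stemming from $\bar p_G = (p_G+p_{G_0})/2 \ge p_{G_0}/2$ on $\{p_{G_0}>0\}$: their negative parts are uniformly bounded, $g_G \ge -\frac12\log 2$, and, using the elementary inequality $\frac12\log x \le \sqrt x - 1$, one has $\int g_G^2\,dP_{G_0} \lesssim h^2(\bar p_G,p_{G_0}) \le R^2$ over $\calG\cap\{h(\bar p_G,p_{G_0})\le R\}$ (the positive part is moreover dominated by $\sqrt{\bar p_G/p_{G_0}}-1$, which has a controlled first moment under $P_{G_0}$). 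These are exactly the hypotheses under which Bernstein's inequality, applied to the centred variables $g_G(X)-\bbE_{G_0}g_G(X)$, yields a Gaussian tail $\exp(-t^2/(cR^2))$ for $t$ below the threshold $\sim\sqrt n R^2$ past which the bounded-part term of Bernstein would overtake the variance term — this is the role of hypothesis~\eqref{thm511_2}.

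Next, the chaining. For $j\ge 0$ set $\epsilon_j = 2^{-j}R$ and take a minimal $\epsilon_j/\sqrt2$-bracketing net of $\{p_G : G\in\calG,\ h(\bar p_G,p_{G_0})\le R\}$ in $L^2(\nu)$, of cardinality $N_j := \exp\{H_B(\epsilon_j/\sqrt2,\cdot,\nu)\}$. Using again $\bar p_G\ge p_{G_0}/2$, these density brackets convert into brackets of comparable width for the log-ratios $g_G$ in $L^2(P_{G_0})$, so each $g_G$ has at scale $j$ a bracket representative $g^{(j)}$ with $\|g^{(j)}-g_G\|_{L^2(P_{G_0})}\lesssim\epsilon_j$, hence $\|g^{(j)}-g^{(j-1)}\|_{L^2(P_{G_0})}\lesssim\epsilon_{j-1}$. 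Telescoping, $\nu_n(g_G) = \nu_n(g^{(0)}) + \sum_{j\ge1}\nu_n(g^{(j)}-g^{(j-1)}) + \nu_n(\text{remainder})$; each increment is truncated at height $\sim\epsilon_{j-1}\sqrt n/a$, so that it is bounded yet retains variance $\lesssim\epsilon_{j-1}^2$, with the truncation defect absorbed into the remainder, whose $L^1(P_{G_0})$-norm the bracket widths force to be negligible once the chain is carried to a fine enough scale.

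Then I would split a deviation budget $a = \sum_{j\ge0}a_j$ and bound the level-$j$ term over all $\le N_j^2$ telescoped pairs by a union bound of Bernstein tails, $\bbP(\text{level-}j\text{ term}\ge a_j) \le C N_j^2 \exp(-a_j^2/(c\epsilon_{j-1}^2))$, valid while $a_j\lesssim\sqrt n\epsilon_{j-1}^2$ — a scaling guaranteed by~\eqref{thm511_2}, possibly after stopping the chain at the finest scale $\epsilon_J\asymp a/(C_1\sqrt n)$ and folding the tail into the remainder. Choosing $a_j\asymp\epsilon_{j-1}\sqrt{H_B(\epsilon_j/\sqrt2,\cdot,\nu)}$ makes $N_j^2\le\exp(a_j^2/(2c\epsilon_{j-1}^2))$, so the level-$j$ tail is $\lesssim\exp(-a_j^2/(2c\epsilon_{j-1}^2))$, and the constraint $\sum_j a_j\le a$ is, on converting the dyadic sum to an integral, precisely $\int_0^R\sqrt{H_B(u/\sqrt2,\cdot,\nu)}\,du \vee R \lesssim a$ — hypothesis~\eqref{thm511_5}. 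Adding the per-level tails together with the $\nu_n(g^{(0)})$ base term (a single function of variance $\lesssim R^2$) and carrying the constants through gives the asserted $C\exp(-a^2/(C^2(C_1+1)R^2))$.

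The step I expect to be the main obstacle is the passage from the bracketing entropy of the density class $\{p_G\}$ in $L^2(\nu)$ — all the hypothesis provides — to bracketing for the unbounded log-ratio functions $g_G$ in $L^2(P_{G_0})$: one must use $\bar p_G\ge p_{G_0}/2$ carefully to turn $|p_G-p_{G'}|$-control into $|g_G-g_{G'}|$-control without losing integrability, while simultaneously dovetailing the truncation heights so that the truncated chaining increments keep both a variance bound of order $\epsilon_{j-1}^2$ and a sup-norm bound of order $\epsilon_{j-1}\sqrt n/a$. Tracking the universal constant $C$ and its interplay with $C_1$ through every invocation of Bernstein is the bookkeeping that produces the precise constants in the statement, rather than merely "some constants".
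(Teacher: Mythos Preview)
The paper does not prove this statement: it is quoted verbatim as a special case of Theorem~5.11 in van de Geer (2000) and used as a black box in the proof of Theorem~\ref{thm:density_estimation_rate_penalized}(ii). There is therefore no paper proof to compare against.

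Your sketch is, in outline, the argument that appears in van de Geer's monograph: one-sided boundedness of $g_G=\tfrac12\log(\bar p_G/p_{G_0})$ from $\bar p_G\ge p_{G_0}/2$, a Bernstein-type inequality tailored to functions bounded from one side, and bracketing chaining with truncation, the entropy integral arising as the sum of the per-level deviation budgets. One point where your outline would need sharpening: the passage from the hypothesis brackets to brackets on the log-ratio class does \emph{not} go through a Lipschitz estimate ``$|p_G-p_{G'}|$-control $\Rightarrow$ $|g_G-g_{G'}|$-control'', since $\log$ is not Lipschitz near zero and $p_G$ may vanish. In van de Geer's proof the brackets are on $\bar p_G^{1/2}$ in $L^2(\nu)$ (this is the class $\widebar\calP_k^{1/2}$ appearing in the paper's condition~\ref{assm:bracket}), and one works directly with the upper and lower bracket envelopes rather than bracket centers, so that monotonicity of $x\mapsto\log x$ and the inequality $\tfrac12\log x\le\sqrt x-1$ transfer the bracket structure to $g_G$ without any Lipschitz step. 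The one-sided Bernstein inequality (van de Geer, Lemma~5.7) is then applied to the bracketed increments. Apart from this, your identification of the roles of hypotheses~\eqref{thm511_2} and~\eqref{thm511_5} is accurate.
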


We are now in a position to prove the claim.

{\bf Proof of Theorem~\ref{thm:density_estimation_rate_penalized}(ii).} 
Let $G_0 \in \calO_{k,c_0}(\Theta)$.
By a straightforward
modification of Lemma~4.1 of~\citet{vandegeer2000}, we have
 \begin{equation} 
 \label{eq:basic}
h^2\left(\bar p_{\hat G_n}, p_{G_0}\right) 
 \leq \frac 1 {\sqrt n}\nu_n(\hat G_n) + \frac{\xi_n \pen(G_0)}{4n}.
\end{equation}  
Let $u > \gamma_n = L \log n/\sqrt n$, where
$L$ is the constant in assumption~\ref{assm:bracket}. In view of 
equation~\eqref{eq:basic}, and the fact
that $h^2(p_G, p_{G_0}) \leq 4 h(\bar p_G, p_{G_0})$
for all $G \in \calO_k(\Theta)$ (cf. Lemma 4.2 of~\citet{vandegeer2000}), we have
\begin{align*}
\bbP\left\{h(p_{\hat G_n}, p_{G_0}) > u\right\} 
 \leq \bbP\left\{h(\bar p_{\hat G_n}, p_{G_0}) > u/4\right\}
& \nonumber \\
& \hspace{-5 em} \leq 
\bbP\left\{
\sup_{\substack{G \in \calO_{k,c_0}(\Theta) \\ h(\bar p_G, p_0) > u/4}} n^{-\frac 1 2} \nu_n(G) + \frac {\xi_n\pen(G_0)} {4n} - h^2(\bar p_G, p_{G_0})\geq 0 \right\}. 
\end{align*} 
Let $\calS = \min\{s: 2^{s+1}u/4 > 1\}$. Then,
\begin{align*}
\bbP\Bigg\{\sup_{\substack{G \in \calO_{k,c_0}(\Theta) \\ h(\bar p_G, p_{G_0}) > u/4}} & n^{-\frac 1 2} \nu_n(G) + \frac {\xi_n\pen(G_0)} {4n}  - h^2(\bar p_G, p_0)\geq 0 \Bigg\}  \\
 &\leq \sum_{s=0}^{\calS} \bbP\left\{\sup_{\substack{G \in \calO_{k,c_0}(\Theta) \\ h(\bar p_G, p_{G_0}) \leq (2^{s+1})u/4}} \nu_n(G)  \geq \sqrt n 2^{2s}\left(\frac{u}{4}\right)^2 - \frac {\xi_n\pen(G_0)} {4\sqrt n}  \right\}.
\end{align*}
We have thus reduced the problem
to that of bounding the supremum of the empirical
process $\nu_n$, for which we
shall invoke Theorem~\ref{thm:van_de_geer}. Let $R=2^{s+1}u$, $C_1=15$,  and
$$a = \sqrt n 2^{2s}\left(\frac{u}{4}\right)^2 -\frac {\xi_n\pen(G_0)} {4\sqrt n}.$$
It can be directly verified that 
condition~\eqref{thm511_2} holds for
all $s=0, \dots, \calS$. 
To further 
show that condition \eqref{thm511_5} holds, note that
\begin{align*}
\int_0^{2^{s+1}u} \sqrt{H_B\left(\frac t {\sqrt 2}, \widebar\calP_k^{ 1/2}\left(\Theta,2^{s+1}\frac{t}{4}\right), \nu\right)} dt \vee 2^{s+1}u & \\
 & \hspace{- 4 em} \leq \sqrt 2 \int_0^{2^{s+\frac 1 2}u} \sqrt{H_B\left(t, \widebar\calP_k^{1/ 2}\left(\Theta, 2^{s+\frac 1 2}t\right), \nu\right)} dt \vee 2^{s+1}u\\
 & \hspace{- 4 em} \leq 2 \calJ_B\left(2^{s+1}u, \widebar\calP_k^{1/2}(\Theta,2^{s+1}u), \nu\right)
 \leq 2 J \sqrt n 2^{2s+ 1} u^2,
\end{align*} 
where we invoked condition~\ref{assm:bracket}.
Now, notice that $\rho(G_0)$ is bounded
above by a universal constant $L_0 > 0$ depending only
on $k,c_0$, irrespective of the choice of 
$G_0 \in \calO_{k,c_0}(\Theta)$. Furthermore, we have
$\sqrt n \gamma_n^2 \asymp (\log n)^2/\sqrt n$,  
and $\xi_n/\sqrt n \asymp \log n/\sqrt n$, thus
for all $u > \gamma_n$, 
the second term in the definition of $a$ is of lower
order than the first. Deduce that there exists 
a constant $N > 0$, depending only on $L_0,c_1,k$
such that for all $n \geq N$, 
$$a \geq \frac 1 2 \sqrt n 2^{2s}(u/4)^2 = 
\sqrt n 2^{2s-5} u^2 
\geq \sqrt{C_0} \cdot \big( 2J\sqrt n 2^{2s+1}u^2\big),$$
for a sufficiently small 
choice of the universal constant $J > 0$.
We may therefore invoke Theorem~\ref{thm:van_de_geer}, to deduce
that for all $n \geq N$, 
\begin{align*}
\bbP\left\{h(p_{\hat G_n}, p_{G_0}) > u\right\} 
 &\leq   \sum_{s=0}^{\calS} \bbP\left\{\sup_{\substack{\calO_{k,c_0}(\Theta) \\ h(\bar p_G, p_{G_0}) \leq (2^{s+1})u/4}} \nu_n(G)  \geq \sqrt n 2^{2s-5}u^2 \right\}\\
 & \leq C\sum_{s=0}^{\infty} \exp\left\{-\frac 1 {16C^2 2^{2s+2}\gamma_n^2}\left[\sqrt n 2^{2s-5}u^2\right]^2\right\} \\
 & \leq C\sum_{s=0}^{\infty}\exp\left\{\frac {n 2^{2s-16} u^2} {C^2}  \right\} \\  
 & \leq c \exp(-nu^2/c),
\end{align*} 
for a large enough constant $c > 0$. 
It follows that, for all $n \geq N$, 
\begin{align*}
\bbE h(p_{\hat G_n}, p_{G_0})
 = \int_0^\infty \bbP(h(p_{\hat G_n}, p_{G_0}) \geq u) du 
 \leq \gamma_n + 
   c \int_{\gamma_n}^\infty 
  \exp\left\{- \frac{nu^2}{c}\right\} du 
 \leq  
   c' \gamma_n,
  \end{align*}
for another universal constant $c' > 0$.
Since the Hellinger distance is bounded above by 1, it is clear that the above display holds for all $n \geq 1$, up to modifying the constant $c'$ in terms of $N$. 
Furthermore, the above calculation is clearly uniform in the $G_0$ under consideration, so the claim follows.
\qed 

\subsection{Proof of Proposition~\ref{prop:bded_mixing}} 
We shall require a bound on the
log-likelihood ratio statistic based
on the MLE $\widebar G_n$. 
Such a bound is implicit
in the proof of~Theorem~7.4 of~\citet{vandegeer2000}. 
Specifically, the following can be deduced
from their Corollary~7.5.
\begin{proposition}[Corollary 7.5~\citet{vandegeer2000}]
\label{prop:lrt}
Assume that condition~\ref{assm:bracket}
holds. Then, given  $k \geq 1$, there exists a constant $C > 0$
depending on $k,d$ and $\calF$, 
such that for all $u \geq L (\log n / n)^{1/2}$, 
$$\sup_{G_0 \in \calO_k(\Theta)}
\bbP_{G_0}\left(\int \log\frac{p_{\widebar G_n}}{p_{G_0}}dP_n
\geq u^2\right) 
\leq C \exp\left(-\frac {n u^2}{C^2}\right).$$
\end{proposition}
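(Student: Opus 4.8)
\textbf{Proof proposal for Proposition~\ref{prop:lrt}.}
The plan is to reduce the tail bound on the log-likelihood ratio at $\widebar G_n$ to a uniform control, over Hellinger balls, of the empirical process $\nu_n(G) = \sqrt n\int_{\{p_{G_0}>0\}}\frac12\log\frac{\bar p_G}{p_{G_0}}\,d(P_n-P_{G_0})$, and then to apply Theorem~\ref{thm:van_de_geer} (Theorem 5.11 of~\cite{vandegeer2000}) after a dyadic peeling, in close parallel with the proof of Theorem~\ref{thm:density_estimation_rate_penalized}(ii) already given above (in fact this proof is simpler, since there is no penalty term to track). The argument also follows directly from Corollary 7.5 of~\cite{vandegeer2000}, whose hypotheses hold here by condition~\ref{assm:bracket}; we sketch the self-contained route.

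First I would record two elementary pointwise-in-$G$ inequalities, valid for every $G\in\calO_k(\Theta)$. Using $\log t \le 2\log\frac{1+t}{2}$ with $t = p_G/p_{G_0}$ gives $\int\log\frac{p_G}{p_{G_0}}\,dP_n \le 2\int\log\frac{\bar p_G}{p_{G_0}}\,dP_n$, and then writing $P_n = (P_n-P_{G_0})+P_{G_0}$ and applying $\log x \le 2(\sqrt x-1)$ with $x = \bar p_G/p_{G_0}$ to the second piece, together with $\int\sqrt{\bar p_G p_{G_0}}\,d\nu - 1 = -h^2(\bar p_G, p_{G_0})$, yields
$$\int\log\frac{p_G}{p_{G_0}}\,dP_n \;\le\; \frac{4}{\sqrt n}\,\nu_n(G) \;-\; 4\,h^2(\bar p_G, p_{G_0}).$$
Taking $G = \widebar G_n$ (the maximizing property of $\widebar G_n$ is not even needed, as the displayed bound holds for every $G$) shows that the event $\{\int\log\frac{p_{\widebar G_n}}{p_{G_0}}\,dP_n \ge u^2\}$ is contained in the event that there exists $G\in\calO_k(\Theta)$ with $\frac{1}{\sqrt n}\nu_n(G) \ge \frac{u^2}{4} + h^2(\bar p_G, p_{G_0})$.

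Next I would bound the probability of this last event by peeling over the value of $h(\bar p_G, p_{G_0})$: the region $h(\bar p_G, p_{G_0}) \le u$, together with the dyadic shells $2^s u < h(\bar p_G, p_{G_0}) \le 2^{s+1}u$ for $s = 0,1,\dots,\calS$ with $\calS = \min\{s : 2^{s+1}u > 1\}$. On each piece the required inequality forces $\nu_n(G)$ to exceed a fixed fraction of $\sqrt n R^2$ over the ball $\{h(\bar p_G, p_{G_0}) \le R\}$ with $R = u$ (resp.\ $R = 2^{s+1}u$). I would then invoke Theorem~\ref{thm:van_de_geer} with this $R$ and $a$ equal to that fraction of $\sqrt n R^2$; conditions~\eqref{thm511_2} and~\eqref{thm511_5} are verified exactly as in the proof of Theorem~\ref{thm:density_estimation_rate_penalized}(ii), using $u \ge L(\log n/n)^{1/2}$ (hence $R > L(\log n/n)^{1/2}$) and condition~\ref{assm:bracket} to bound $\calJ_B(R, \widebar\calP_k^{1/2}(\Theta,R),\nu) \le J\sqrt n R^2$, with $J$ chosen small enough. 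This gives each piece a bound $C\exp(-n2^{2s}u^2/C_0)$; summing the geometric series in $s$ and adding the $h \le u$ contribution yields $C'\exp(-nu^2/C')$. Since $L$, $J$ and the universal constants from Theorem~\ref{thm:van_de_geer} depend only on $k,d,\calF$, the bound is uniform over $G_0\in\calO_k(\Theta)$, giving the claim.

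The only genuinely new ingredient compared to the already-proven Theorem~\ref{thm:density_estimation_rate_penalized}(ii) is the first display; the remaining (minor) point to be careful about is matching the bracketing-entropy normalization between the class $\{p_G : h(\bar p_G, p_{G_0}) \le R\}$ in Theorem~\ref{thm:van_de_geer} and the square-root class $\widebar\calP_k^{1/2}(\Theta,\cdot)$ in condition~\ref{assm:bracket}, which is handled by the same change of variables ($u\mapsto u/\sqrt2$, $p\mapsto\sqrt p$) used there. I do not expect any substantial obstacle.
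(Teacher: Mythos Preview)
Your proposal is correct and follows the standard route (the basic inequality relating the log-likelihood ratio to $\nu_n(G)$ and $h^2(\bar p_G,p_{G_0})$, then peeling over Hellinger shells and applying Theorem~\ref{thm:van_de_geer}), which is precisely how Corollary~7.5 of~\cite{vandegeer2000} is obtained. The paper itself does not give a proof of Proposition~\ref{prop:lrt}: it simply states the result as a direct citation of that Corollary, noting that the bound is implicit in the proof of Theorem~7.4 of~\cite{vandegeer2000}; your sketch is thus a faithful self-contained expansion of what the paper invokes, and in fact reuses exactly the machinery already deployed in the proof of Theorem~\ref{thm:density_estimation_rate_penalized}(ii) minus the penalty term, as you observe.
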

Let $G_0 \in \calO_{k,c_0}(\Theta)$. After possibly replacing
$C$ by $C \vee L$, apply Proposition~\ref{prop:lrt}
with $u = C \sqrt{\log n / n}$ to deduce that 
$$\ell_n(\widebar G_n) - \ell_n(G_0) \leq C^2\log n,$$
with probability at least $1-C/n$.
Now, by definition of the penalized MLE
$\hat G_n$ and of the non-penalized MLE $\widebar G_n$, 
we have
\begin{align*}
0\leq \Big[ \ell_n(\hat G_n) 
  - \ell_n(G_0)\Big] 
   + \xi_n \left[  \rho(\hat G_n) - \rho(G_0)\right]
 &\leq \Big[\ell_n(\widebar G_n) - \ell_n(G_0)\Big]
  +  \xi_n \left[  \rho(\hat G_n) - \rho(G_0)\right]\\
 &\leq C^2\log n
  +  \xi_n \left[  \rho(\hat G_n) - \rho(G_0)\right],
\end{align*}
with probability at least $1-C/n$. 
% Now, by~\citet{dacunha-castelle1999},
% It follows that
% $$\xi_n\rho(\hat G_n)
% \geq \Big[\ell_n(G_0) - \ell_n(\widebar G_n)\Big]
% + \xi_n \left[  \rho(\hat G_n) - \rho(G_0)\right]
% \geq -C_1\log n+ \xi_n \rho(G_0).$$
Therefore, since $\xi_n \geq \log n$, we obtain
$$\rho(\hat G_n) \geq -C^2 + \rho(G_0)
\geq -C^2 + k_0\log c_0 = - C_1,$$
where $C_1 = C^2 + k_0\log(1/c_0) > 0$. 
By definition of $\pen$, it must follow that
$$\hat p_i^n \geq \exp(-C_1),\quad i=1, \dots, \hat k_n,$$
with probability at least $1-C/n$. The claim follows
with $c = \exp(C_1)\vee C$. \qed 
%%%%%%%%%%%%%%%%%%

\subsection{Proof of Theorem~\ref{theorem:rate_MLE_strong_identifiability}}
\label{subsec:proof:theorem:rate_MLE_strong_identifiability}
The claim will follow from the following result, 
relating the discrepancy $\calD(G,G_0)$ to the Total Variation
distance between the corresponding
densities $p_G$ and $p_{G_0}$. 
\begin{lemma}
\label{lemma:key_bound_strong_identifiability}
Assume 
the same conditions as
Theorem~\ref{theorem:rate_MLE_strong_identifiability}.
Then, there exists a constant $c > 0$ depending
on $G_0, d, k, \calF$, such that for any $G \in \calO_k(\Theta)$, 
\begin{align}
    %\inf_{G \in \mathcal{O}_{k, c_{0}}(\Theta)} \frac{ h(p_{G}, p_{G_{0}})}{\mathcal{D}(G,G_{0}; \epsilon)} > 0.
    V(p_{G}, p_{G_{0}}) \geq c \calD(G,G_{0}). \label{key_inequality_strong_identifiability}
\end{align} 
\end{lemma}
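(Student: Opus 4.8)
The plan is to establish the inequality $V(p_G,p_{G_0})\geq c\,\calD(G,G_0)$ by a standard two-regime argument, splitting according to whether $G$ is close to $G_0$ or not. In the \emph{global regime}, where $\calD(G,G_0)\geq \epsilon_0$ for some fixed $\epsilon_0>0$, one argues by contradiction and compactness: if no such $c$ existed, there would be a sequence $G_m\in\calO_k(\Theta)$ with $V(p_{G_m},p_{G_0})\to 0$ but $\calD(G_m,G_0)\geq\epsilon_0$; passing to a subsequence, $G_m$ converges weakly to some $G'\in\calO_k(\Theta)$ (using compactness of $\Theta$ and boundedness of the number of atoms), and by Fatou / Scheff\'e $p_{G'}=p_{G_0}$ $\nu$-a.e. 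Since the family $\calP_k(\Theta)$ is identifiable (which follows from $2$-strong identifiability, in particular first-order identifiability), $G'=G_0$, so $W_1(G_m,G_0)\to 0$; but then one checks $\calD(\cdot,G_0)$ is continuous at $G_0$ along such sequences (each Voronoi cell $\calA_j(G_m)$ eventually stabilizes and all atoms in it converge to $\theta_j^0$, forcing $\calD(G_m,G_0)\to 0$), a contradiction.

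The substance is therefore the \emph{local regime}, $\calD(G,G_0)<\epsilon_0$, which forces the Voronoi cells $\calA_j(G)$ to partition $[k']$ with each atom $\theta_i$, $i\in\calA_j$, lying near $\theta_j^0$, and $\sum_{i\in\calA_j}p_i$ near $p_j^0$. Here I would argue again by contradiction: suppose there is a sequence $G_m\to G_0$ with $V(p_{G_m},p_{G_0})/\calD(G_m,G_0)\to 0$. Write $r_m=\calD(G_m,G_0)$ and Taylor-expand $p_{G_m}-p_{G_0}$ to second order in the atoms and to first order in the weights, grouping terms by Voronoi cell. For a singleton cell $\calA_j=\{i\}$ the contribution is $(p_i-p_j^0)f(x|\theta_j^0) + p_j^0\,\langle \nabla_\theta f(x|\theta_j^0),\theta_i-\theta_j^0\rangle + O(\|\theta_i-\theta_j^0\|^2)$, whose relevant coefficients $(p_i-p_j^0)$ and $p_j^0(\theta_i-\theta_j^0)$ have combined magnitude comparable to the singleton part of $\calD$ (which is linear in $\|\theta_i-\theta_j^0\|$ and in $|p_i-p_j^0|$). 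For a cell $\calA_j$ with $|\calA_j|>1$ the same expansion produces, after collecting, a weight-type coefficient $\sum_{i\in\calA_j}p_i-p_j^0$ multiplying $f(x|\theta_j^0)$, a first-order coefficient $\sum_{i\in\calA_j}p_i(\theta_i-\theta_j^0)$ multiplying $\nabla_\theta f(x|\theta_j^0)$, and a second-order coefficient $\sum_{i\in\calA_j}p_i(\theta_i-\theta_j^0)(\theta_i-\theta_j^0)^\top$ multiplying $\nabla^2_\theta f(x|\theta_j^0)$; note that the quadratic part of $\calD$ on such a cell, $\sum_{i\in\calA_j}p_i\|\theta_i-\theta_j^0\|^2$, is exactly the trace of that last matrix, so up to constants $\calD(G,G_0)$ controls (and is controlled by) the sum of the $\ell^1$-norms of all these coefficients. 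Dividing the expansion of $p_{G_m}-p_{G_0}$ by $r_m$ and passing to a subsequence along which each normalized coefficient converges, one gets a nontrivial linear combination $\sum_{\ell=0}^{2}\sum_{|\eta|=\ell}\sum_j \alpha_\eta^{(j)}\,\partial^{|\eta|}f/\partial\theta^\eta(x|\theta_j^0)$ that must vanish for $\nu$-a.e.\ $x$ (because $V(p_{G_m},p_{G_0})/r_m\to0$), while not all $\alpha_\eta^{(j)}$ are zero; here the argument that the second-order coefficients survive normalization in a multi-atom cell, and in a form (a PSD matrix with controlled trace) that is not killed when we rescale, is the place requiring care, and it is where $2$-strong identifiability is invoked to derive the contradiction. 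The uniform control of the Taylor remainders is supplied by assumption~\hyperref[assm:holder]{\textbf{A($2$)}}.

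The step I expect to be the main obstacle is the bookkeeping in the multi-atom cells: verifying that after dividing by $r_m=\calD(G_m,G_0)$ the collection of normalized coefficients $\bigl(\tfrac{1}{r_m}(\sum_{i\in\calA_j}p_i-p_j^0),\ \tfrac{1}{r_m}\sum_{i\in\calA_j}p_i(\theta_i-\theta_j^0),\ \tfrac{1}{r_m}\sum_{i\in\calA_j}p_i(\theta_i-\theta_j^0)(\theta_i-\theta_j^0)^\top\bigr)$ remains bounded and has a nonzero limit in a way compatible with $2$-strong identifiability — in particular that a nonzero PSD second-order coefficient cannot be ``cancelled'' by lower-order terms — and that the remainder terms (including the cross terms between different cells, which involve $f$ evaluated at distinct atoms and hence are handled by the $k$-atom version of strong identifiability) are genuinely of order $o(r_m)$ uniformly in $x$. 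Once this linear-algebraic normalization is set up correctly, the contradiction with the strong identifiability implication is immediate, and combining the two regimes yields the claimed $c>0$, depending on $G_0,d,k,\calF$ as stated. Finally, Theorem~\ref{theorem:rate_MLE_strong_identifiability} follows by combining~\eqref{key_inequality_strong_identifiability} with $V\leq h$ and Theorem~\ref{thm:density_estimation_rate_penalized}(i).
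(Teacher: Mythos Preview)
Your approach matches the paper's: a global/local split, compactness for the global regime, and a contradiction via Taylor expansion plus $2$-strong identifiability for the local regime. The decomposition you propose (zeroth/first/second-order coefficients indexed by Voronoi cells, with second order only in multi-atom cells and first order in singleton cells) is exactly what the paper does, and your observation that the quadratic part of $\calD$ is the trace of the second-order coefficient matrix is the key to the ``not all coefficients vanish'' step.

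There is, however, a genuine gap at precisely the place you flag. Dividing the expansion by $r_m=\calD(G_m,G_0)$ and asking that the resulting coefficients be bounded will \emph{fail} for the first-order coefficients $\sum_{i\in\calA_j}p_i(\theta_i-\theta_j^0)$ in a multi-atom cell: these are of order $\bigl(\sum_i p_i\|\theta_i-\theta_j^0\|^2\bigr)^{1/2}$ by Cauchy--Schwarz, hence only $O(\sqrt{r_m})$ in general, so the normalized coefficient can blow up like $r_m^{-1/2}$. Consequently you cannot directly pass to a convergent subsequence of the $r_m$-normalized coefficient vector.

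The paper's remedy is a two-step normalization. First, argue (as you essentially do) that if \emph{all} coefficients divided by $D_n$ tended to zero, then in particular the diagonal second-order entries (multi-atom cells) and the first-order entries (singleton cells) would vanish, forcing the atom part of $\calD/D_n$ to vanish, hence the weight part $\sum_j|\bar p_j^n-p_j^0|/D_n\to 1$, contradicting that the zeroth-order coefficients also vanish. Thus at least one coefficient is $\gtrsim D_n$. Second, let $m_n$ be the maximum absolute coefficient (divided by $D_n$) and set $d_n=1/m_n$; since some coefficient does not vanish, $d_n$ is bounded. Now $d_n\cdot(\text{coefficient}/D_n)$ lies in $[-1,1]$ with at least one entry of modulus $1$, so along a subsequence you obtain a nontrivial limiting linear combination $\sum_{j}\sum_{|\eta|\leq 2}\alpha_\eta^{(j)}\partial^\eta f(x|\theta_j^0)$; meanwhile $d_n V(p_{G_m},p_{G_0})/D_n\to 0$ (since $d_n$ is bounded), and Fatou plus $2$-strong identifiability give the contradiction. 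With this adjustment your proof goes through.
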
 
Recall that we have assumed condition~\ref{assm:bracket}. Therefore, by combining Lemma~\ref{lemma:key_bound_strong_identifiability}
with Theorem~\ref{thm:density_estimation_rate_penalized}(i)
and the well-known inequality $V \leq h$, 
we deduce that
% there is a constant $C(G_0) > 0$ depending on $G_0, k, d, \calF$
% such that
$$\bbE \calD(\widebar G_n, G_0) \lesssim \bbE V(p_{\widebar G_n}, p_{G_0})
\leq  \bbE h(p_{\widebar G_n}, p_{G_0})
\lesssim \sqrt{\frac{\log n}{n}},$$ 
as claimed. It thus remains to prove 
Lemma~\ref{lemma:key_bound_strong_identifiability}. 

\paragraph{Proof of Lemma~\ref{lemma:key_bound_strong_identifiability}.}
Our proof proceeds using a similar
argument as that of~\citet{ho2016EJS}, 
though with key differences to account for 
our choice of loss function.
We will prove that 
\begin{equation} 
\label{eq:pf_si_local_limit}
\lim_{\delta \to 0}
\inf_{\substack{G \in \calO_k(\Theta) \\ 
\calD(G, G_0) \leq \delta}} 
\frac{V(p_{G}, p_{G_0})}{\calD(G, G_0)}
 > 0.
 \end{equation}
This implies a local version of the claim, namely that there exist constants
$\delta_0,C > 0$ such that
for all $G \in \calO_{k}(\Theta)$ satisfying $\calD(G,G_0) \leq \delta$,
\begin{equation}
    \label{eq:pf_si_local}
    \calD(G, G_0) 
\leq C V(p_G, p_{G_0}).
\end{equation} 
We begin by showing how this local inequality
leads to the claim, and we will then prove equation \eqref{eq:pf_si_local_limit}. 
Taking equation \eqref{eq:pf_si_local_limit} for granted, it suffices to prove
\begin{equation}
\label{eq:si_global}
    \inf_{\substack{G \in \calO_{k}(\Theta) \\ \calD(G, G_0) \geq \delta_0}}
    \frac{V(p_G, p_{G_0})}{\calD(G, G_0)} > 0.
\end{equation}
Suppose by way of a contradiction that the above display does not
hold. Then, there exists a sequence of mixing measures
$G_n \in \calO_{k}(\Theta)$ with $\calD(G_n, G_0) \geq \delta_0$
such that $
    \frac{V(p_{G_n}, p_{G_0})}{\calD(G_n, G_0)} \to 0.
$
Since the parametric family $\calF$ is assumed to be 2-strongly
identifiable, the model $\{p_G: G \in \calO_{k}(\Theta)\}$ is identifiable,
thus the map
$$(G, G') \in \calO_k(\Theta) \times \calO_k(\Theta) \mapsto V(p_G, p_{G'})$$
defines a metric on $\calO_k(\Theta)$. 
Since this metric is bounded, 
the sequence $\{G_n\}$ admits a  subsequence 
converging
to some mixing measure $\widebar G \in \calO_{k}(\Theta)$. For ease of exposition,
we replace this subsequence by the entire sequence $G_n$ in what follows,
thus we have $V(p_{G_n}, p_{\widebar G}) \to 0$.
Now, notice that $\calD(\widebar G, G_0) \geq \delta_0$ 
by definition of $G_n$. Furthermore, $V(p_{G_n}, p_{G_0}) \to 0$
by assumption.
Combining these facts leads to
$V(p_{G_0}, p_{\widebar G}) = 0$, and hence
$\widebar G = G_0$, which contradicts the fact that
$\calD(\widebar G, G_0) > 0$,
and hence proves equation \eqref{eq:si_global}.

It remains to prove the local inequality \eqref{eq:pf_si_local_limit}. 
We again assume by way of a contradiction that
there exists a sequence of mixing measures $G_n = \sum_{i=1}^{k_{n}} p_i^n \delta_{\theta_i^n} \in \calO_{k}(\Theta)$
such that $\calD(G_n, G_0) \to 0$  but
\begin{equation}
\label{eq:pf_si_contradiction}
\frac{V(p_{G_n}, p_{G_0})}{\calD(G_n, G_0)} \to 0, \quad n\to\infty.
\end{equation}
Define
$$\mathcal{A}_{j}^{n} = \mathcal{A}_{j}(G_{n}) = 
  \{i \in \{1, \ldots, k_n\}: \|\theta_{i}^{n} - \theta_{j}^{0}\| \leq \|\theta_{i}^{n} - \theta_{\ell}^{0}\| \ \forall \ell \neq j
  \}, \quad j = 1, \dots, k_0.$$
Since $k_{n} \leq k$ for all $n$, there exists a subsequence of $G_{n}$ such that $k_{n}$ does not change with $n$.
Therefore, up to replacing $G_n$ by this subsequence, we may
assume that $k_{n} = k' \leq k$ for all $n$.
Similarly, since there are only a finite number of
distinct sets $\mathcal{A}_{1}^{n} \times  \ldots \times \mathcal{A}_{k_{0}}^{n}$
over the range of $n\geq 1$, we may assume without loss of generality that 
$\mathcal{A}_{j} = \mathcal{A}_{j}^{n}$ 
does not change with $n$, for all $j=1, \dots, k_0$. 
% Finally, notice that by definition of $\calD$, 
% the condition $\calD(G_n, G_0) \to 0$
% must imply that, up to taking subsequences, 
% \begin{equation} 
% \label{eq:pf_si_atom_convergence}
% p_i \|\theta_i^n - \theta_j^0\| \to 0, \quad \text{for all } i\in \calA_j, j=1, \dots, k_0,
% \end{equation}
% and, writing $\bar p_j^n = \sum_{i \in \calA_{j}^n} p_i^n$, 
% \begin{equation} 
% \label{eq:pf_si_atom_convergence}
% \left|\bar p_j^n - p_j^0\right| \to 0, \quad j=1, \dots, k_0.
% \end{equation}
% By combining the previous two displays and the fact that $p_j^0 > 0$ for all $j=1, \dots, k_0$, we also infer that
% \begin{equation}
% \label{eq:pf_si_atom_convergence}
% \min_{i\in \calA_j} \|\theta_i^n - \theta_j^0\| \to 0,\quad 
% j=1, \dots, k_0.
% \end{equation}
Now, consider the decomposition
\begin{align*}
p_{G_n}(x) - p_{G_0}(x)
 &= \sum_{j: |\mathcal{A}_{j}| > 1} \sum_{i \in \mathcal{A}_{j}} p_i \Big( f(x|\theta_i^n) - f(x|\theta_j^0)\Big)
 \\ &+ \sum_{j: |\mathcal{A}_{j}| = 1} \sum_{i \in \mathcal{A}_{j}} p_i \Big( f(x|\theta_i^n) - f(x|\theta_j^0)\Big)
+ \sum_{j = 1}^{k_{0}} (\bar p_j^n - p_j^0) f(x|\theta_j^0)\\
 & : = A_{n,1}(x) + A_{n,2}(x)+ B_n(x),
\end{align*} 
where we write $\bar p_j^n = \sum_{i\in \calA_j} p_i^n$ for all
$j \in [k_0]$. 
By a Taylor expansion to second order, notice that
\begin{align*}
A_{n,1}(x) 
 = \sum_{j: |\mathcal{A}_j| > 1} \sum_{i \in \mathcal{A}_j} p_i\left[ (\theta_i^n - \theta_j^0)^\top \frac{\partial f}{\partial \theta}(x|\theta_j^0) + 
 \frac 1 2 (\theta_i^n - \theta_j^0)^\top \frac{\partial^2 f}{\partial \theta^2}(x|\theta_j^0)  (\theta_i^n - \theta_j^0)\right]  + R_{n,1}(x)
\end{align*}
where $R_{n,1}(x)$ is a Taylor remainder satisfying
\begin{equation} 
\label{eq:pf_si_pointwise_first_remainder}
\|R_{n,1}\|_{L^\infty(\nu)} \lesssim \sum_{j:|\calA_j| > 1} \sum_{i\in \calA_j} p_i^n \norm{\theta_i^n - \theta_j^0}^{2+\gamma},
\end{equation} 
for some $\gamma > 0$, due
to condition~\hyperref[assm:holder]{\textbf{A($2$)}}.
Furthermore, by a Taylor expansion to first order, we also have 
\begin{align*}
    A_{n,2}(x)=\sum_{j: |\mathcal{A}_{j}| = 1} \sum_{i \in \mathcal{A}_{j}} p_i (\theta_i^n - \theta_j^0)^\top \frac{\partial f}{\partial \theta}(x|\theta_j^0) 
    + R_{n,2}(x),
\end{align*}
where, again, the Taylor remainder $R_{n,2}$ satisfies
\begin{equation} 
\label{eq:pf_si_pointwise_second_remainder}
\|R_{n,2}\|_{L^\infty(\nu)}\lesssim  \sum_{j:|\calA_j|= 1} \sum_{i\in \calA_j} p_i^n \norm{\theta_i^n - \theta_j^0}^{1+\gamma},
\end{equation} 
Let $D_n = \calD(G_n, G_0)$. By equations~\eqref{eq:pf_si_pointwise_first_remainder}--\eqref{eq:pf_si_pointwise_second_remainder}
and the definition of $\calD$, we deduce that 
 $\|R_{n,\ell}\|_{L^\infty(\nu)}/D_n = o(1)$ for $\ell=1,2$. 
 Therefore, we have 
 uniformly almost everywhere in $x \in \calX$ that, 
$$\left|\frac{p_{G_n}(x) - p_{G_0}(x)}{D_n}\right| 
\asymp\left| \frac{A_{n,1}(x) + A_{n,2}(x) + B_n(x)}{D_n}\right|.$$
Notice that the ratio $(A_{n,1}(x) + A_{n,2}(x) + B_n(x))/D_n$
is a linear combination of $f(x|\theta_j^0)$ and its first two partial derivatives, 
with coefficients not depending on $x$.  
We claim that at least one of these coefficients does not tend
to zero as $n \to \infty$. 
Indeed, suppose by way of a contradiction that this is not the case. 
Then, in particular the coefficients corresponding to the second derivatives in $A_{n,1}/D_n$ and the coefficients
corresponding to the first derivatives in $A_{n,2}/D_n$ must vanish, and the  absolute sum of any subset of these coefficients
must vanish, implying the following display,
$$\frac 1 {D_n} \left[ 
 \sum_{j: |\mathcal{A}_j| > 1} \sum_{i \in \mathcal{A}_j} p_i
 \norm{\theta_i^n-\theta_j^0}^2 
 + \sum_{j: |\mathcal{A}_j| = 1} \sum_{i \in \mathcal{A}_{j}} p_i
 \norm{\theta_i^n-\theta_j^0}
\right] \longrightarrow 0.$$ 
The definition of $D_n$ then implies that
$$\frac{\sum_{j=1}^{k_0} |\bar p_j - p_j^0|}{D_n} \longrightarrow 1.$$
We deduce that at least one coefficient in the linear combination
$B_n(x)/D_n$ does not tend to zero, which is a contradiction. 
Thus, there indeed exists at least one coefficient
in the linear combinations $A_{n,\ell}(x)/D_n$, $B_n(x)/D_n$, $\ell=1,2,$
which does not vanish. Let $m_n$ denote the greatest absolute value of these nonzero
coefficients, and set $d_n = 1/m_n$. Then, there must exist scalars
  $\alpha_i \in \bbR$ and vectors $\beta_{j},\nu_{j} \in \bbR^d$,
  $j=1,\dots,k_0$, not all of which are zero,
  such that for almost all $x \in \calX$,
\begin{align}
\label{eq:pf_si_coefs}
\begin{split}
\frac{d_n A_{n,1}(x)}{D_n} + \frac{d_n A_{n,2}(x)}{D_n}
 &\longrightarrow \sum_{j=1}^{k_0}
 \left[\beta_{j}^\top \frac{\partial f}{\partial\theta}(x|\theta_j^0) 
  + \nu_{j}^\top \frac{\partial^2 f}{\partial\theta^2}(x|\theta_j^0) \nu_{j}   \right]  \\
% \frac{d_n A_{n,2}(x)}{D_n}
%  &\longrightarrow \sum_{j: |\calA_j|=1} \sum_{i \in \calA_j} \eta_{ij}^\top \frac{\partial f}{\partial\theta}(x|\theta_j^0),\\
\frac{d_n B_n(x)}{D_n} &\longrightarrow \sum_{j=1}^{k_0} \alpha_j f(x|\theta_j^0).
\end{split}
  \end{align}
On the other hand, the assumption \eqref{eq:pf_si_contradiction} and the fact that $d_n$ are uniformly bounded implies that
$$
d_n\frac{V(p_{G_n}, p_{G_0})}{D_n} = \int d_n \left|\frac{A_{n,1}(x) + A_{n,2}(x) + B_n(x)}{D_n}\right|dx \to 0.$$ 
By Fatou's Lemma combined with equation \eqref{eq:pf_si_coefs}, it follows that for almost all $x \in \calX$,
$$ \sum_{j=1}^{k_0} \left[\alpha_j f(x|\theta_j^0) + 
\beta_{j}^\top \frac{\partial f}{\partial\theta}(x|\theta_j^0) 
  + \nu_{j}^\top \frac{\partial^2 f}{\partial\theta^2}(x|\theta_j^0) \nu_{j}   \right]  =0.$$
Since the coefficients $\alpha_j,\beta_{j}, \nu_{j}$ are not all zero, the above display contradicts
the second-order strong identifiability assumption on the parametric family $\calF$. It follows that equation \eqref{eq:pf_si_contradiction} could not have held,
whence the claim \eqref{eq:pf_si_local_limit} is proved. This completes the proof. \qed 
%%%%%%%%%%%%%%%%%%%%%%%%%%%%%%%%%%%%%%%%%%%%%%%%%%%%%%%%%%%%%%%%
%%%%%%%%%%%%%%%%%%%%%%%
\subsection{Proof of Theorem~\ref{theorem:rate_MLE_Gaussian}}
\label{subsec:proof:theorem:rate_MLE_Gaussian}
We will prove Theorem~\ref{theorem:rate_MLE_Gaussian}
as a consequence of the following
upper bound of $\widebar {\calD}$ by the Total Variation distance. 
\begin{lemma}
\label{lemma:key_bound_Gaussian}
Assume the same conditions as Theorem~\ref{theorem:rate_MLE_Gaussian},
and let $c_{0} \in (0, \min_{1 \leq j \leq k_{0}} p_j^{0})$. Then, 
there exists $C > 0$, depending
on $G_0, c_0,d,k,\Theta$ such that 
for all $G \in \mathcal{O}_{k, c_{0}}(\Theta)$,
\begin{align} 
    V(p_{G}, p_{G_{0}}) \geq C \widebar{\mathcal{D}}(G,G_{0}). \label{key_inequality_Gaussian}
\end{align}
\end{lemma}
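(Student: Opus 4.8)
The plan is to prove the inverse bound $V(p_G,p_{G_0})\geq C\,\widebar{\mathcal{D}}(G,G_0)$ by the same two-step scheme used for Lemma~\ref{lemma:key_bound_strong_identifiability}, replacing the second-order Taylor analysis by one whose order is dictated by $\bar r(|\mathcal{A}_j|)$, to compensate for the failure of second-order strong identifiability recorded in~\eqref{eq:Pde_Gaussian}. First I would reduce to a local statement. Since the location-scale Gaussian mixture model is identifiable, $(G,G')\mapsto V(p_G,p_{G'})$ is a metric on the compact set $\calO_{k,c_0}(\Theta)$, so a routine compactness-and-contradiction argument (identical to the one used to pass from the local to the global inequality in Lemma~\ref{lemma:key_bound_strong_identifiability}) shows it suffices to prove that $\liminf_{\delta\to 0}\inf\{V(p_G,p_{G_0})/\widebar{\mathcal{D}}(G,G_0):G\in\calO_{k,c_0}(\Theta),\ \widebar{\mathcal{D}}(G,G_0)\leq\delta\}>0$. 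When $\widebar{\mathcal{D}}(G,G_0)$ is bounded away from $0$ a Voronoi cell may be empty, but then the weight term of $\widebar{\mathcal{D}}$ is itself bounded below, so that regime is handled by the global argument.

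For the local inequality I would argue by contradiction: suppose there is a sequence $G_n=\sum_i p_i^n\delta_{(\mu_i^n,\Sigma_i^n)}\in\calO_{k,c_0}(\Theta)$ with $D_n:=\widebar{\mathcal{D}}(G_n,G_0)\to 0$ but $V(p_{G_n},p_{G_0})/D_n\to 0$. Passing to a subsequence, I fix the fitted order, the Voronoi partition $\{\mathcal{A}_j\}_{j=1}^{k_0}$ and the cardinalities $s_j=|\mathcal{A}_j|$ (all cells nonempty once $\delta$ is small relative to $\min_j p_j^0$), so the atoms in $\mathcal{A}_j$ all converge to $\theta_j^0=(\mu_j^0,\Sigma_j^0)$. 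Then I Taylor-expand $p_{G_n}-p_{G_0}=\sum_j\sum_{i\in\mathcal{A}_j}p_i^n\bigl(f(\cdot|\theta_i^n)-f(\cdot|\theta_j^0)\bigr)+\sum_j(\bar p_j^n-p_j^0)f(\cdot|\theta_j^0)$, where $\bar p_j^n=\sum_{i\in\mathcal{A}_j}p_i^n$, to order $1$ inside singleton cells and to order $\bar r(s_j)$ inside cells of size $s_j\geq 2$ (counting $\Sigma$-derivatives with double weight). Using~\eqref{eq:Pde_Gaussian}, every mixed derivative $\partial^{n_1}_\mu\partial^{n_2}_\Sigma f(\cdot|\theta_j^0)$ collapses onto one of the linearly independent derivatives $\partial^\ell_\Sigma f(\cdot|\theta_j^0)$ or $\partial_\mu\partial^\ell_\Sigma f(\cdot|\theta_j^0)$, so the expansion becomes a finite linear combination of these, with coefficients that are suitable groupings of $\tfrac{1}{D_n}\sum_{i\in\mathcal{A}_j}p_i^n\tfrac{(\mu_i^n-\mu_j^0)^{n_1}(\Sigma_i^n-\Sigma_j^0)^{n_2}}{n_1!\,n_2!}$, together with the weight coefficients $(\bar p_j^n-p_j^0)/D_n$. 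A Young's-inequality estimate — pairing $\|\mu_i^n-\mu_j^0\|^{\bar r(s_j)}$ with $\|\Sigma_i^n-\Sigma_j^0\|^{\bar r(s_j)/2}$, exactly the two quantities appearing in $\widebar{\mathcal{D}}$ — shows that the Taylor remainders are $o(D_n)$ uniformly in $x$, so $V(p_{G_n},p_{G_0})/D_n$ agrees, up to $o(1)$, with the $L^1(\nu)$-norm of this linear combination.

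Next, normalize by the largest coefficient: set $d_n=1/m_n$, $m_n$ the maximum modulus over all the coefficients above. After a further subsequence each normalized coefficient converges, at least one limit has modulus $1$, and Fatou's lemma together with $V(p_{G_n},p_{G_0})/D_n\to 0$ forces the limiting linear combination to vanish $\nu$-a.e. Linear independence of the family $\{\partial^\ell_\Sigma f(\cdot|\theta_j^0),\ \partial_\mu\partial^\ell_\Sigma f(\cdot|\theta_j^0),\ f(\cdot|\theta_j^0)\}$ over distinct $\theta_j^0$ then makes every limiting coefficient — in each cell and in the weight term — equal to zero, contradicting the normalization. The crux is to rule out that all cell-$j$ coefficients vanish when $s_j\geq 2$: I would rescale, $a_i^n=(\mu_i^n-\mu_j^0)/\varepsilon_n$, $b_i^n=(\Sigma_i^n-\Sigma_j^0)/\varepsilon_n^2$ with $\varepsilon_n=\max_{i\in\mathcal{A}_j}(\|\mu_i^n-\mu_j^0\|+\|\Sigma_i^n-\Sigma_j^0\|^{1/2})$, and $c_i^n=\sqrt{p_i^n}\geq\sqrt{c_0}$; passing to a subsequence, $(a_i^n,b_i^n,c_i^n)\to(a_i,b_i,c_i)$ with all $c_i\neq 0$ — this is precisely where the weight lower bound $c_0$ is used — and $(a_i,b_i)$ not all zero. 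Vanishing of the cell-$j$ coefficients up to order $\bar r(s_j)$ then says exactly that $(a_i,b_i,c_i)_{i\in\mathcal{A}_j}$ solves the polynomial system~\eqref{eqn:system_polynomial_Gaussian_first} with $k'=s_j$ and $r=\bar r(s_j)$; if some $a_i\neq 0$ this is a nontrivial solution, contradicting the minimality in the definition of $\bar r(s_j)$, while if all $a_i=0$ the surviving pure-scale coefficients $\sum_i c_i^2 b_i^m$ for $m=1,\dots,\bar r(s_j)/2$ must all vanish with $c_i^2>0$, which is impossible by a Vandermonde argument. This rescaling-and-extraction step — verifying that the normalization yields a genuinely nontrivial solution and handling the location-versus-scale dichotomy — is the main obstacle; the remainder control and the linear-independence input are routine adaptations of the strongly identifiable case and of the analysis in~\cite{ho2016convergence}.
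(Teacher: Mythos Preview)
Your high-level scheme is the same as the paper's: reduce to a local statement by compactness, argue by contradiction along a sequence $G_n$ with frozen Voronoi cells, Taylor-expand to order $1$ on singleton cells and order $\bar r(|\mathcal A_j|)$ on the others, convert all mixed derivatives via the heat-equation identity~\eqref{eq:Pde_Gaussian} to a single linearly independent family, and then show the resulting coefficients cannot all be $o(D_n)$ before invoking Fatou and strong identifiability of the location-Gaussian family. The paper converts to pure $\mu$-derivatives rather than to $\{\partial_\Sigma^\ell f,\partial_\mu\partial_\Sigma^\ell f\}$, but that is cosmetic.

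The genuine gap is in the ``crux'' step for $d\geq 2$. Your rescaling argument treats $a_i,b_i$ as scalars and appeals directly to the \emph{univariate} system~\eqref{eqn:system_polynomial_Gaussian_first}; in particular the Vandermonde claim for the all-$a_i=0$ branch presumes scalar $b_i$. In the paper's proof this multivariate difficulty is handled by a case analysis that your sketch omits. One first isolates a coordinate $u$ where the diagonal contribution $\sum_{i\in\mathcal A_j}p_i^n\big(|\mu_{i,u}^n-\mu_{j,u}^0|^{\bar r}+|\Sigma^n_{i,uu}-\Sigma^0_{j,uu}|^{\bar r/2}\big)$ dominates $D_n$, restricts to multi-indices supported on that coordinate, and only then obtains the univariate system with $a_i=(\mu^n_{i,u}-\mu^0_{j,u})/\varepsilon_n$, $b_i=(\Sigma^n_{i,uu}-\Sigma^0_{j,uu})/2\varepsilon_n^2$; this is the paper's Case~3.1. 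If instead an \emph{off-diagonal} entry $\Sigma_{uv}$ with $u\neq v$ dominates $D_n$, the reduction to~\eqref{eqn:system_polynomial_Gaussian_first} is unavailable, and a separate argument (the paper's Case~3.2) is needed: one looks at the specific multi-index $\tau=(2,2,0,\dots,0)$, uses that the diagonal contributions are already $o(D_n)$ in this case, and is left with $\sum_{i\in\mathcal A_j}p_i^n(\Sigma^n_{i,uv}-\Sigma^0_{j,uv})^2$, which cannot vanish relative to $D_n$. Your proposal does not distinguish diagonal from off-diagonal covariance deviations and therefore does not cover this second case; as written, the rescaling/Vandermonde step would not go through when the dominant discrepancy in $\widebar{\mathcal D}$ comes from an off-diagonal $\Sigma$-entry.
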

Before proving Lemma~\ref{lemma:key_bound_Gaussian}, 
we show how it leads to the claim.
Under the conditions of
Theorem~\ref{theorem:rate_MLE_Gaussian} regarding
the parameter space $\Theta$,
it follows from Lemma 2.1 of~\citet{ho2016EJS}
(see also~\citet{ghosal2001}) 
that the location-scale
Gaussian density family $\calF$ satisfies 
$$H_B(\epsilon, \widebar \calP_k^{1/2}(\Theta,\epsilon),\nu) 
\leq C_1 \log(1/\epsilon),\quad \epsilon > 0,$$
for a constant $C_1 > 0$ depending on $d,k,\Theta$.
Given $L > 0$, it follows that for all 
$\epsilon \geq L(\log n/n)^{1/2}$,
\begin{align*} 
\calJ_B(\epsilon, \widebar\calP_k^{1/2}(\Theta,\epsilon),\nu) 
 \leq C_1 \epsilon \sqrt{\log(1/\epsilon)} 
 = C_1 \sqrt n \left(\frac \epsilon {\sqrt n}\right) \sqrt{\log(1/\epsilon)}  
 &\leq \frac{C_1 \sqrt n \epsilon^2}{L}.
\end{align*}
Condition~\ref{assm:bracket} is then
satisfied by choosing $L = C_1/J$, thus
we may apply Theorem~\ref{thm:density_estimation_rate_penalized}
and Proposition~\ref{prop:bded_mixing} in what follows. 

By Proposition~\ref{prop:bded_mixing},
there is an event $A_n$ and a constant $c > 1$ such that 
$\bbP(A_n^\cp) \leq c/n$ and 
$\hat p_i^n \geq 1/c$ for all $i=1,\dots,\hat k_n$. 
In particular, letting $c_0 = \min\{p_j^0:j\in[k_0]\} \wedge c^{-1}$, we have
$\widehat G_n \in \calO_{k,c_0}(\Theta)$ over the event $A_n$. 
Therefore, by   Lemma~\ref{lemma:uniform_bound}
and the fact that $\widebar \calD $ is 
bounded by a constant depending only
on $\diam(\Theta), k$
we arrive at
\begin{align*}
\bbE\Big[ \widebar \calD (\hat G_n, G_0^n)\Big]
 &= \bbE\left[ \widebar \calD (\widehat G_n, G_0^n) I_{A_n}\right] + 
    \bbE\left[\widebar \calD (\widehat G_n, G_0^n) I_{A_n^\cp}\right]   \\
 &\lesssim  \bbE\left[ h(p_{\widehat G_n}, p_{G_0^n}) I_{A_n}\right] + \bbP(A_n^\cp)
 \lesssim \log n / \sqrt n + 1/n
 \lesssim \log n / \sqrt n,
\end{align*}
where we used the inequality $V \leq h$ and we invoked the Hellinger
rate of convergence of $p_{\hat G_n}$, given
in Theorem~\ref{thm:density_estimation_rate_penalized}(ii). The claim follows; it thus remains to prove
Lemma~\ref{lemma:key_bound_Gaussian}.

{\bf Proof of Lemma~\ref{lemma:key_bound_Gaussian}.} We will prove the following local version of the claim:
% there exists $\delta_0 > 0$ such that
% for any mixing measure $G$ satisfying
% $\widebar \calD(G, G_0) \leq \delta_0$, we have
\begin{equation} 
\label{eq:pf_gauss_claim}
\lim_{\delta\to 0} \inf_{\substack{G \in \calO_{k,c_0}(\Theta)\\ \widebar{\calD}(G, G_0) \leq \delta  }}
\frac{V(p_G, p_{G_0})}{\widebar\calD(G, G_0)} > 0.
\end{equation}
The above local statement directly leads to the claim
by the same argument as in the beginning of the 
proof of Lemma~\ref{lemma:key_bound_strong_identifiability},
and we therefore omit it. 
Our proof follows along similar lines
as the proof of Proposition 2.2 of~\citet{ho2016convergence}, 
though with key modifications to account
for our distinct loss function. We proceed with the following steps. 

\textbf{Step 1: Setup.} To prove inequality \eqref{eq:pf_gauss_claim}, 
assume by way of a contradiction that it does not
hold. Then, there exists a sequence of mixing
measures $G_n = \sum_{i=1}^{k_n}p_i^n \delta_{(\mu_i^n,\Sigma_i^n)}$
with $p_i^n \geq c_0$ for all $i \in [k_n]$, 
such that $D_n := \widebar{\calD}(G_n, G_0) \to 0$
and $V(p_{G_n}, p_{G_0}) / D_n \to 0.$ 
Furthermore, since
$k_n \leq k$ for all $n \geq 1$, 
there exists a subsequence of $G_n$
admitting a fixed number of atoms
$k_n = k' \leq k$. 
Similarly as in the proof of Theorem~\ref{theorem:rate_MLE_strong_identifiability}, 
we replace $G_n$ by such a subsequence throughout
the sequel.

Define the Voronoi diagram
$$\calA_j^n = \left\{1\leq i \leq k': \norm{\mu_i^n - \mu_j^0}
+ \norm{\Sigma_i^n - \Sigma_j^0} 
\leq \norm{\mu_i^n - \mu_\ell^0} + 
\norm{\Sigma_i^n - \Sigma_\ell^0}, \ \forall \ell \neq j\right\},
\quad j=1, \dots, k_0.$$
By the same argument as 
in the proof of Lemma~\ref{lemma:key_bound_strong_identifiability}, we may assume, up to taking a further
subsequence of $G_n$, that
the sets $\calA_j \equiv \calA_j^n$
do not change with $n$
for all $ j=1, \dots, k_0$ and all $n \geq 1$. 
Furthermore, we note that, 
since the mixing proportions of $G_n$ are bounded below
by $c_0$, the fact that $D_n \to 0$ implies 
$$\sup_{i \in \calA_j} 
\Big[ \norm{\mu_i^n - \mu_j^0} + 
    \norm{\Sigma_i^n - \Sigma_j^0}\Big] 
    \to 0, \quad
    j=1, \dots, k_0.$$
Throughout what follows, 
we  write the coordinates 
of $\mu_j^0$ and $\Sigma_j^0$
as $\mu_j^0 = (\mu_{j,1}^0, \dots, \mu_{j,d}^0)$
and $\Sigma_j^0 = (\Sigma_{j,uv}^0)_{u,v=1}^d$,
for all $j=1, \dots, k_0$, 
and similarly for $\mu_i^n, \Sigma_i^n$,
$i=1, \dots, k'$.
We also write for simplicity
$\theta_i^n = (\mu_i^n, \Sigma_i^n)$ 
and $\theta_j^0 = (\mu_j^0,\Sigma_j^0)$ for 
all $j=1, \dots, k_0$ and $i=1, \dots, k'$.

\textbf{Step 2: Taylor Expansions.} 
Similarly to the proof of Lemma~\ref{lemma:key_bound_strong_identifiability}, consider
the following representation
\begin{align*}
p_{G_n}(x) - p_{G_0}(x)
 &= \sum_{j: |\calA_{j}| > 1} \sum_{i \in \mathcal{A}_{j}} p_i^n \Big( f(x|\theta_i^n) - f(x|\theta_j^0)\Big)
 \\ &+ \sum_{j: |\calA_{j}| = 1} \sum_{i \in \mathcal{A}_{j}} p_i^n \Big( f(x|\theta_i^n) - f(x|\theta_j^0)\Big)
+ \sum_{j = 1}^{k_{0}} (\bar p_j^n - p_j^0) f(x|\theta_j^0)\\
 & : = \bar A_{n}(x) + \bar B_n(x)+ C_n(x),
\end{align*} 
%\textbf{Expansion of $A_{n}(x)$.}
where $\bar p_j^n = \sum_{i\in \calA_j} p_i^n$ for all $j\in [k_0]$. 
By repeated Taylor expansions to order $\rbar(|\calA_j^n|)$ 
for all $j=1, \dots, k_0$,
we obtain 
\begin{align*}
\bar A_{n}(x) 
 = \sum_{j: |\calA_j| > 1} \sum_{i \in \calA_j} p_i^n
 \sum_{\alpha,\beta}\frac 1 {\alpha!\beta!} 
 (\mu_i^n - \mu_j^0)^{\alpha}
 (\Sigma_i^n - \Sigma_j^0)^{\beta} \frac{
  \partial^{|\alpha|+|\beta|} f}{\partial\mu^{\alpha}\partial\Sigma^{\beta}} (x|\theta_j^0)  + R_{n,1}(x)
  =: A_{n}(x) + R_{n,1}(x),
\end{align*}
where the third summation 
in the above display
is over all multi-indices
$\alpha \in \bbN^d$
and $\beta \in \bbN^{d\times d}$
satisfying $1\leq |\alpha|+|\beta|
 := \sum_{l=1}^d \alpha_l + \sum_{l,s=1}^d \beta_{ls} \leq \rbar(|\calA_{j}|)$. 
Above, we write
 $\alpha! = \prod_{l=1}^d \alpha_l!$
 and $\beta! = \prod_{l,s=1}^d \beta_{ls}!$.
Furthermore, $R_{n,1}$ is a Taylor remainder which satisfies
$$\|R_{n,1}\|_{L^\infty(\nu)}
\lesssim \sum_{j:|\calA_j| > 1}
\sum_{i \in \calA_j} p_i^n \Big[ \norm{\mu_i^n
 - \mu_j^0}^{\rbar(|\calA_{j}|)+\gamma}
 + \norm{\Sigma_i^n - \Sigma_j^0}^{\rbar(|\calA_{j}|)+\gamma} \Big],$$
for some constant $\gamma > 0$, as a result of the H\"older smoothness over $\Theta$, up to arbitrary order, of the location-scale Gaussian parametric
family.
Now, recall the key PDE \eqref{eq:Pde_Gaussian},
% $\dfrac{\partial^2 f}{\partial \mu^2} = 2 \dfrac{\partial f}{\partial\Sigma}$
% for all $(\mu,\Sigma) \in \Theta$,
which implies
that for any multi-indices
$\alpha\in \bbN^d$ and $\beta \in \bbN^{d\times d}$, 
$$\frac{\partial^{|\alpha|+|\beta|}f}{\partial\mu^{\alpha}
\partial\Sigma^{\beta}}
 = \frac 1 {2^{|\beta|}} \frac{\partial^{|\alpha|+2|\beta|} f}{\partial \mu^{\tau_0(\alpha,\beta)}},$$
where  
we denote by $\tau_0(\alpha,\beta) \in \bbN^d$
the multi-index
with coordinates
$\alpha_v + \sum_{u=1}^d (\beta_{uv} + \beta_{vu})$, $v=1, \dots, d$. 
Notice that we may then write for all $x \in \bbR^d$,
\begin{align*}
A_{n}(x) 
 &= \sum_{j: |\calA_j| > 1} \sum_{i \in \calA_j} p_i^n
 \sum_{\substack{\alpha,\beta \\ 1 \leq |\alpha| + |\beta| \leq \rbar(|\calA_j|)}}\frac 1 {2^{|\beta|}\alpha!\beta!} 
 (\mu_i^n - \mu_j^0)^{\alpha}
 (\Sigma_i^n - \Sigma_j^0)^{\beta} \frac{
  \partial^{|\alpha|+2|\beta|} f}{\partial\mu^{\tau_0(\alpha,\beta)}} (x|\theta_j^0)  \\
  &=  \sum_{j: |\calA_j| > 1} 
  \sum_{|\tau|=1}^{2\rbar(|\calA_j|)} 
 a_{\tau,j} \frac{ \partial^{|\tau|} f}{\partial\mu^{\tau}} (x|\theta_j^0),
\end{align*} 
where for all $\tau \in \bbN^d$, we write
$$ a_{\tau,j} = \sum_{\substack{\alpha,\beta   \\ 1\leq |\alpha|+|\beta|\leq \rbar(|\calA_j|) \\ \tau_0(\alpha,\beta) = \tau}} 
 \sum_{i \in \calA_j} \frac 1 {2^{|\beta|}\alpha! \beta !}p_i^n
  (\mu_i^n - \mu_j^0)^{\alpha}
 (\Sigma_i^n - \Sigma_j^0)^{\beta}.$$
% In the above display, 
% the summation is taken over all multi-indices
% $\balpha,\bbeta$ satisfying xxxx.
Furthermore, by a first-order
Taylor expansion in the definition of 
$\bar B_n$, we obtain
\begin{align*}
\bar B_n(x)
 &=\sum_{j: |\calA_{j}| = 1} \sum_{i \in \calA_{j}} p_i \left\{
(\mu_i^n - \mu_j^0)^\top \frac{\partial f}{\partial \mu}(x|\theta_j^0) +
\mathrm{tr}\left[\frac{\partial f}{\partial\Sigma}(x|\theta_j^0)^\top 
(\Sigma_i^n - \Sigma_j^0)\right]\right\}
+ R_{n,2}(x) \\
 & =: B_n(x) + R_{n,2}(x),
\end{align*}
where $R_{n,2}$ is a Taylor
remainder which 
satisfies,
$$\|R_{n,2}\|_{L^\infty(\nu)}\lesssim \sum_{j:|\calA_j|= 1} \sum_{i\in \calA_j} p_i^n \Big[ \norm{\mu_i^n - \mu_j^0}^{1+\gamma}
 + \norm{\Sigma_i^n - \Sigma_j^0}^{1+\gamma}\Big].$$
 Similarly as the term $A_{n}$, 
 we may explicitly
 rewrite 
 $B_n$ as a linear combination
 of the first- and second-order partial derivatives
 of the density $f$ with respect to $\mu$, 
\begin{align*} 
B_{n}(x)
 &= \sum_{j: |\calA_{j}| = 1} \sum_{i \in \calA_{j}} p_i \left\{
(\mu_i^n - \mu_j^0)^\top \frac{\partial f}{\partial \mu}(x|\theta_j^0) +
\frac 1 2 \mathrm{tr}\left[\frac{\partial f}{\partial\mu\partial\mu^\top}(x|\theta_j^0)^\top 
(\Sigma_i^n - \Sigma_j^0)\right]\right\}   \\
&  =  \sum_{j: |\calA_j|= 1} 
  \sum_{|\kappa|=1}^{2} 
 b_{\kappa,j} \frac{ \partial^{|\kappa|} f}{\partial\mu^{\kappa}} (x|\theta_j^0),
\end{align*} 
where 
$$ b_{ \kappa,j} = \sum_{\substack{\alpha,\beta   \\  |\alpha|+|\beta|= 1 \\ \tau_0(\alpha,\beta) = \kappa}} 
 \sum_{i \in \calA_j} \frac 1 {2^{|\beta|}}p_i^n
  (\mu_i^n - \mu_j^0)^{\alpha}
 (\Sigma_i^n - \Sigma_j^0)^{\beta}.$$ 
Notice that 
the conditions on the remainder
terms $R_{n,1},R_{n,2}$ together with the definition
of $D_n$ readily imply that,
uniformly in  $x \in \bbR^d$,
\begin{align}
\left|\frac{p_{G_n}(x) - p_{G_0}(x)}{D_n} \right| \asymp 
\left|\frac{A_{n}(x) + B_{n}(x) + C_n(x)}{D_n}\right|.
\end{align} 
Letting
$c_{j} = \bar p_j^n - p_j^0$, 
it can be seen that 
the right-hand side of the above display is a linear
combination of partial derivatives of $f$ with 
respect to $\mu$, with coefficients
$a_{\tau,j}/D_n, b_{\kappa,j}/D_n,
c_j/D_n$, $j=1, \dots, k_0$, where
$\tau$ and $\kappa$
vary over the aforementioned ranges.
In the next step, 
we will show that not all
of these coefficients
decay to zero. 

\textbf{Step 3: Nonvanishing Coefficients.} 
Assume by way
of a contradiction that 
all coefficients
$a_{\tau,j}/D_n, b_{\kappa,j}/D_n,
c_j/D_n$ tend to zero. 
Define the following quantities,  
 \begin{align*}
D_{n,1} &=\sum_{j: |\calA_{j}| > 1} \sum_{i \in \calA_{j}} p_{i}^n \left\{ \|\mu_{i}^n - \mu_{j}^{0}\|^{\bar{r}(|\calA_{j}|)} + 
\|(\Sigma_{i,uu}^n - \Sigma_{j,uu}^0)_{1 \leq u \leq d}\|^{\frac{\rbar(|\calA_j|)}{2}}
\right\}, \\
D_{n,2} &=\sum_{j: |\calA_{j}| > 1} \sum_{i \in \calA_{j}} p_{i}^n \| \left(\Sigma_{i,uv}^n - \Sigma_{j,uv}^0\right)_{1 \leq u \neq v \leq d}\|^{\frac{\rbar(|\calA_j|)}{2}},\\
D_{n,3} &=  \sum_{j: |\calA_{j}| = 1} \sum_{i \in \calA_{j}} p_{i}^n \left(\|\mu_{i}^n - \mu_{j}^{0}\| + \|\Sigma_{i}^n- \Sigma_{j}^{0}\| \right), \nonumber \\
D_{n,4} &=  \sum_{j = 1}^{k_{0}} \left|\bar  p_{j}^n - p_{j}^{0}\right|.  
\end{align*}
In the special case $d=1$, $D_{n,2}$ is understood to 
be identically equal to zero. 
Note that
there must exist
$1 \leq i \leq 4$ such that
$D_{n,i}/D_n \not\to 0$. 
We will consider four cases
according to which of the terms $D_{n,i}$ 
dominates $D_n$ 

\textbf{Case 3.1: $D_{n,1}/D_n \not\to 0$.}
In this case, it must 
hold that for some indices $1 \leq j \leq k_0$
and $1 \leq u \leq d$ such that 
$\tilde D_{n,1} / D_n \not\to 0$, where
$$\tilde D_{n,1} = \sum_{i \in \calA_j}
p_i^n \Big[ |\mu_{i,u}^n - \mu_{j,u}^0|^{\rbar(|\calA_{j}|)}
 +  |\Sigma_{i,uu}^n - \Sigma_{j,uu}^0|^{\rbar(|\calA_j|)/2}\Big]
$$
Fix such $j$ and assume $u=1$ without loss of generality,
throughout the rest of this Case. 
It follows by assumption that 
$a_{\tau,j}/\tilde D_{n,1} \to 0$
for all $1 \leq |\tau| \leq \rbar(|\calA_j|)$. 
In particular, 
this property holds for all $\tau$ such that $\tau_l = 0$ for $l=2, \dots, d$.
Notice that $\tau = \tau_0(\alpha,\beta)$ 
takes the latter form if and only if $\alpha_l = \beta_{1l} = \beta_{l1} = \beta_{ls}  = 0$
for all $l,s=2, \dots, d$. 
Therefore, taking the sum over 
such multi-indices leads
to the limit
\begin{equation} 
\label{eq:pf_gaussian_presystem}
\frac  1 {\tilde D_{n,1}}\sum_{i\in \calA_j} 
\sum_{\substack{\alpha_1,\beta_{11} \\ \alpha_1+2\beta_{11}=\tau_1}} p_i^n 
\frac{1}{2^{\beta_{11}}\alpha_1!\beta_{11}!}
(\mu_{i,1}^n - \mu_{j,1}^0)^{\alpha_1}
(\Sigma_{i,11}^n - \Sigma_{j,11}^0)^{\beta_{11}}
 \to 0, \quad \tau_1=1, \dots, \rbar(|\calA_j|).
 \end{equation}
%  for all $j \in [k_0]$. 
Now, define
$$\widebar m_n = \max_{i \in \calA_j} p_i^n,\quad  
  \widebar M_n =
  \max\{|\mu_{i,1}^n-\mu_{j,1}^0|, 
  |\Sigma_{i,11}^n - \Sigma_{j,11}^0|^{1/2}: 
  i\in \calA_j\}.$$
For any $i \in \calA_j$,
$p_i^n/\widebar m_n$ forms a bounded
sequence of positive real numbers. Therefore, up
to replacing it by a subsequence, 
it admits a nonnegative limit which we denote
by $z_i^2 = \lim_{n\to \infty}p_i^n/\widebar m_n$. 
We similarly define
$x_i = \lim_{n\to\infty} (\mu_{i,1}^n-\mu_{j,1}^0)/\widebar M_n,$
and
$y_i = \lim_{n\to\infty}(\Sigma_{i,11}^n-\Sigma_{j,11}^0)/2\widebar M_n^2$.
We note that, since $p_i^n \geq c_0$
due to the definition of $\calO_{k,c_0}(\Theta)$, 
the real numbers $z_i$ are nonvanishing, 
and at least one is equal to 1. Similarly,
at least one of each of the $a_i$ and
$b_i$ is equal to $1$ or $-1$. 
Furthermore,  $\tilde D_{n,1}/(\widebar m_n \widebar M_n^{\tau_1})
\not\to 0$ for any $\tau_1 = 1, \dots, \rbar(|\calA_j|)$. 
We may then divide the numerator and denominator
in equation \eqref{eq:pf_gaussian_presystem} by 
$\widebar M_n^{\tau_1} \widebar m_n$
and take $n \to \infty$, to obtain the following
system of polynomial equations
\begin{align*}
\sum_{i \in \calA_j} \sum_{\alpha_1 + 2\beta_{11} = \tau_1}
\frac{z_i^2 x_i^{\alpha_1} y_i^{\beta_{11}}}
     {\alpha_1!\beta_{11}!}=0, \quad 
     \tau_1=1 ,\dots, \rbar(|\calA_j|).
\end{align*}
By definition of $\rbar(|\calA_j|)$,
this system cannot have any nontrivial solutions, 
which is a contradiction. 

\textbf{Case 3.2: $D_{n,2}/D_n \not\to 0$.}
In this case, 
there must instead exist indices $1 \leq j \leq k_0$ and 
$1 \leq u \neq v \leq d$ for which 
$\tilde D_{n,2}/D_{n,2} \not\to 0$, where
$$\tilde D_{n,2} = \sum_{i\in \calA_j} p_i^n 
|\Sigma_{i,uv}^n-\Sigma_{j,uv}^0|^{\rbar(|\calA_j|)/2}.$$
Without loss of generality, we assume $u=1$ and $v=2$,
and fix the above choice of $j$ throughout the sequel. 
Similarly to the previous case, 
we have by assumption that $a_{\tau,j}/\tilde D_{n,2} \to 0$
for all $1 \leq |\tau| \leq \rbar(|\calA_j|)$. 
It must also follow that for all such $\tau$, 
$a_{\tau,j}/\widebar  D_{n,2} \to 0$, where
$$\widebar D_{n,2} =  \sum_{i\in \calA_j} p_i^n |\Sigma_{i,12}^n - \Sigma_{j,12}^0|^2.$$
Here, we used the fact that $|\calA_j| \geq 2$, hence
$\bar  r(|\calA_j|) \geq 4$. 
In particular, this property
holds for the value  $\tau = (2, 2, 0, \dots, 0)$, 
where we again note that this choice of $\tau$ is allowable because 
 $\rbar(|\calA_j|) \geq 4 = |\tau|$.
Therefore, 
$$\frac 1 {\widebar  D_{n,2}} \sum_{\substack{\alpha,\beta  \\\tau_0(\alpha,\beta) = \tau}} 
 \sum_{i \in \calA_j} \frac 1 {2^{|\beta|}\alpha! \beta !}p_i^n
  (\mu_i^n - \mu_j^0)^{\alpha}
 (\Sigma_i^n - \Sigma_j^0)^{\beta} \to 0.$$
Since Case 3.1 does not hold, 
we have $ D_{n,1} / \widebar D_{2,n}\to 0$. 
Therefore, under the assumption of Case 3.2, 
any term in the above
summation with $\alpha_l > 0$ or $\beta_{ll} > 0$ ($l=1,2$)
vanishes, and the preceding display
thus reduces to 
$$\frac 1 {\widebar  D_{n,2}}  
 \sum_{i \in \calA_j} p_i^n
 (\Sigma_{i,12}^n - \Sigma_{j,12}^0)^2 \to 0.$$
 By definition of $\widebar D_{n,2}$, this is a contradiction, 
 thus Case 3.2 could not have held.
 
\textbf{Case 3.3: $D_{n,3}/D_n \not\to 0$.}
By assumption, 
the coefficients 
$b_{\kappa,j}/D_{n}$ vanish
for all multi-indices $\kappa\in \bbN^d$ satisfying $|\kappa|\in\{1,2\}$,
and all $j=1, \dots, k_0$. 
Therefore, their absolute sum also vanishes, implying
$$
\frac 1 {D_n} \sum_{j: |\calA_j|=1}
\sum_{|\kappa|=1}^2|b_{\kappa,j}|
 = {\frac 1 {D_n}}
\sum_{j:|\calA_j|=1}\sum_{i\in \calA_j}
p_i^n\left(\norm{\mu_i^n-\mu_j^0}_1 + 
\frac 1 2 \norm{\Sigma_i^n-\Sigma_j^0}_1\right) \to 0$$
The assumption of Case 3.3, together
with the topological equivalence of the norms
$\norm\cdot_1$ and $\norm\cdot_2$, then implies
$$\frac 1 {D_{n,3}}
\sum_{j:|\calA_j|=1}\sum_{i\in \calA_j}
p_i^n\left(\norm{\mu_i^n-\mu_j^0} + 
\norm{\Sigma_i^n-\Sigma_j^0}\right)
 \to 0,$$
 which is a clear contradiction.
 
\textbf{Case 3.4: $D_{n,4}/D_n  \not\to 0$.}
In this case, it is clear that the coefficients
$c_j/D_{n,4} \not\to 0$, whence
$c_j/D_n \not \to 0$, for all $j=1, \dots, k_0$,
and we immediately obtain a contradiction.

We have thus shown that each
of Cases 3.1-3.4 lead to a contradiction.
We conclude that
at least one of the coefficients $a_{\tau,j}/D_n, b_{\kappa,j}/D_n,
c_j/D_n$ does not tend to zero. 

\textbf{Step 4: Reduction to Location-Gaussian
Strong Identifiability.}
Let $m_n$ denote the maximum of the absolute
values of the coefficients $a_{\tau,j}/D_n, b_{\kappa,j}/D_n,
c_j/D_n$, and set $d_n=1/m_n$. Similarly as in the proof
of Lemma~\ref{lemma:key_bound_strong_identifiability},
there exist 
real numbers $\zeta_{\tau,j},\xi_{\kappa,j},\nu_j$
not all zero 
such that for almost all $x \in \bbR$, 
\begin{align*}
\frac{d_n A_n(x)}{D_n} &\longrightarrow \sum_{j:|\calA_j|>1}
 \sum_{|\tau|=1}^{2\rbar(|\calA_j|)} \zeta_{\tau,j}
\frac{\partial^{|\tau|} f}
{\partial\mu^{\tau}}(x|\theta_j^0), \\
\frac{d_n B_n(x)}{D_n} &\longrightarrow \sum_{j:|\calA_j|=1}
\sum_{|\kappa|=1}^{2} 
 \xi_{\kappa,j}
\frac{\partial^{|\kappa|} f}
{\partial\mu^{\kappa}}(x|\theta_j^0), \\
\frac{d_n C_n(x)}{D_n} &\longrightarrow \sum_{j=1}^{k_0} 
\nu_j f(x|\theta_j^0).
\end{align*}
Furthermore, by Step 3, $\sup_{n\geq 1}d_n < \infty$,
and by the assumption $V(p_{G_n}, p_{G_0}) / D_n \to 0$,
we arrive at 
$$d_n \frac{V(p_{G_n}, p_{G_0})}{D_n} \asymp 
\int d_n \left|\frac{A_n(x) + B_n(x) + C_n(x)}{D_n}\right| dx \to 0.$$
By Fatou's Lemma, the integrand of the above
display vanishes for almost all $x \in \bbR$,
whence
\begin{align*} 
\sum_{j:|\calA_j|>1}  \sum_{|\tau|=1}^{2\rbar(|\calA_j|)} \zeta_{\tau,j}
\frac{\partial^{|\tau|} f}
{\partial\mu^{\tau}}(x|\theta_j^0) +
\sum_{j:|\calA_j|=1}\sum_{|\kappa|=1}^{2} 
 \xi_{\kappa,j}
\frac{\partial^{|\kappa|} f}
{\partial\mu^{\kappa}}(x|\theta_j^0) +
 \sum_{j=1}^{k_0} 
\nu_j f(x|\theta_j^0) = 0.
\end{align*}
The strong identifiability of the location-Gaussian family
now implies that the coefficients
$\zeta_{\tau,j}, \xi_{\kappa,j}, \nu_j$
are all zero, which is a contradiction.
The claim follows.\hfill $\square$
%%%%%%%%%%%%%%%%%%%%%%%%%%%%%%%%%%%%%%%%%%%%%%%%%%%%%%%%%%%%%%
%%%%%%%%%%%%%%%%%%
\subsection{Proof of Theorem~\ref{theorem:uniform_rates_strong_identifiable}}
\label{subsec:proof:theorem:uniform_rates_strong_identifiable}
For any mixing measure $G \in \calO_k(\Theta)$, 
let
$F(x,G) = \int_{-\infty}^x p_G(x)d\nu(x)$
denote the CDF of $p_G$. 
Similarly to the previous subsections, 
the proof will follow
from the following key inequality
relating $\widetilde W$ to a statistical
distance, which we take to be 
the Kolmogorov-Smirnov distance
by analogy with~\citet{heinrich2018}. 
\begin{lemma}
\label{lemma:uniform_bound}
Under the same conditions as Theorem~\ref{theorem:uniform_rates_strong_identifiable}, there exist  $C,\epsilon_0 > 0$ depending on $c_{0}, \calF$, and $G_{*}$ such that 
\begin{align}
   \|F(\cdot,G) - F(\cdot,G')\|_\infty \geq C \widetilde{W}(G,G'), \label{key_uniform_inequality}
\end{align}
for any $G \in \mathcal{O}_{k,c_0}(\Theta)$ and $G' \in \mathcal{E}_{k_{0},c_0}(\Theta)$ such that $\widetilde{W}(G, G_{*}) \vee \widetilde{W}(G',G_{*}) \leq \epsilon_{0}$. 
% Here, $C$ is some universal constant depending only on $c_{0}, G_{*}$, and $\Theta$.
\end{lemma}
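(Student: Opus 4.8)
The plan is to adapt the template of Lemmas~\ref{lemma:key_bound_strong_identifiability} and~\ref{lemma:key_bound_Gaussian}: bound $\widetilde W$ from below by a statistical distance---here the Kolmogorov--Smirnov distance $\|F(\cdot,G)-F(\cdot,G')\|_\infty$ rather than the Total Variation distance---and then feed the resulting inequality into Theorem~\ref{thm:density_estimation_rate_penalized}(ii) and Proposition~\ref{prop:bded_mixing}. Two features distinguish this from the pointwise lemmas: the Voronoi cells are generated by the \emph{limiting} measure $G_*$, and \emph{both} mixing measures vary. Since~\eqref{key_uniform_inequality} is already local---it only concerns $G,G'$ within $\widetilde W$-distance $\epsilon_0$ of $G_*$---no separate global compactness argument is needed, in contrast with those lemmas; recall also that $\Theta\subseteq\bbR$ here, so all expansions below involve only ordinary $\theta$-derivatives. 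First I would argue by contradiction: if~\eqref{key_uniform_inequality} fails, taking $C=\epsilon_0=1/n$ produces sequences $G_n\in\calO_{k,c_0}(\Theta)$, $G_n'\in\calE_{k_0,c_0}(\Theta)$ (necessarily with $G_n\neq G_n'$) such that $\widetilde W(G_n,G_*)\vee\widetilde W(G_n',G_*)\to 0$ and $\|F(\cdot,G_n)-F(\cdot,G_n')\|_\infty/\widetilde W(G_n,G_n')\to 0$. Since $\widetilde W(G_n,G_*)\to 0$ forces $G_n\to G_*$ weakly while the weights of $G_n$ are $\ge c_0$, every atom of $G_n$ converges to an atom of $G_*$, and likewise for $G_n'$. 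Passing to a subsequence I fix the number $k'$ of atoms of $G_n$ and the full combinatorial type of the two Voronoi partitions, so that $s_l:=|\calA_l(G_n)|$ and $t_l:=|\calA_l(G_n')|$ are constant in $n$; then $s_l,t_l\geq 1$ and $\sum_{l=1}^{k_*}(s_l+t_l)=k'+k_0\leq k+k_0$, whence the cell-$l$ exponent $m_l:=s_l+t_l-1$ satisfies $m_l\leq r\leq k+k_0$.

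Write $D_n=\widetilde W(G_n,G_n')$ and $F_0(x\mid\theta)=\int_{-\infty}^x f(t\mid\theta)\,d\nu(t)$, so that
\[
F(\cdot,G_n)-F(\cdot,G_n')=\sum_{l=1}^{k_*}\Bigl(\sum_{i\in\calA_l(G_n)}p_i^n\,F_0(\cdot\mid\theta_i^n)-\sum_{j\in\calA_l(G_n')}p^{\prime n}_j\,F_0(\cdot\mid\theta^{\prime n}_j)\Bigr).
\]
For each $l$ I would Taylor-expand $F_0(\cdot\mid\theta_i^n)$ and $F_0(\cdot\mid\theta^{\prime n}_j)$ about $\theta_l^*$ to order $m_l$, using condition~\hyperref[assm:holder]{\textbf{A($k+k_0$)}} (differentiating under the integral sign) to bound the remainders $R_{n,l}$ in $\|\cdot\|_\infty$ by a constant times $\sum_{i\in\calA_l(G_n)}p_i^n|\theta_i^n-\theta_l^*|^{m_l+\delta}+\sum_{j\in\calA_l(G_n')}p^{\prime n}_j|\theta^{\prime n}_j-\theta_l^*|^{m_l+\delta}$. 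The leading term is then $\sum_{l}\sum_{\eta=0}^{m_l}\mathfrak m_{l,\eta}^n\,\frac{\partial^\eta F_0}{\partial\theta^\eta}(\cdot\mid\theta_l^*)$, where $\mathfrak m_{l,\eta}^n=\tfrac1{\eta!}\bigl(\sum_{i\in\calA_l(G_n)}p_i^n(\theta_i^n-\theta_l^*)^\eta-\sum_{j\in\calA_l(G_n')}p^{\prime n}_j(\theta^{\prime n}_j-\theta_l^*)^\eta\bigr)$ is the $\eta$-th signed moment of $G_n-G_n'$ restricted to cell $l$ (note $\sum_l\mathfrak m_{l,0}^n=0$, both measures being probability measures). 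The hard part, and the main obstacle of the whole argument, is the routine-looking claim $\|R_{n,l}\|_\infty=o(D_n)$: this requires understanding precisely how $\widetilde W(G_n,G_n')$ compares with the signed moments $\mathfrak m_{l,\eta}^n$ and with the Voronoi radii $\max_{i\in\calA_l(G_n)}|\theta_i^n-\theta_l^*|$, and is the analogue, for this generalized transport cost, of the divided-difference bookkeeping underlying Theorem~6.3 of~\cite{heinrich2018}.

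Granting this, it remains to show the normalized coefficients $\mathfrak m_{l,\eta}^n/D_n$ cannot all tend to $0$. I would decompose $D_n$ (up to absolute constants) as a sum over cells of within-cell transport costs with exponents $m_l$, plus a cross-cell mass-mismatch term $\sum_l|u_l^n-v_l^n|$, where $u_l^n,v_l^n$ are the masses $G_n,G_n'$ place on cell $l$. If the mismatch term has the same order as $D_n$, then $\sum_l|\mathfrak m_{l,0}^n|=\sum_l|u_l^n-v_l^n|\asymp D_n$ already exhibits a nonvanishing zeroth-order coefficient. Otherwise some within-cell cost dominates, say in cell $l$; rescaling the atoms and weights of $G_n,G_n'$ inside that cell (the weights by their maximum, the centered locations by the largest radius) and passing to a further subsequence, vanishing of all $\mathfrak m_{l,\eta}^n/D_n$ would yield a nonzero signed combination of at most $s_l+t_l$ point masses on $\bbR$ whose moments of orders $0,1,\dots,s_l+t_l-1$ all vanish, which is impossible by invertibility of the Vandermonde matrix. (This is precisely why $m_l=s_l+t_l-1$, not $s_l+t_l$, is the correct exponent in $\widetilde W$.) Hence some normalized coefficient stays bounded away from zero.

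Finally, set $d_n=1/M_n$ with $M_n=\max_{l,\eta}|\mathfrak m_{l,\eta}^n|/D_n$; then $d_n$ is bounded and, along a subsequence, $\mathfrak m_{l,\eta}^n/(M_nD_n)\to c_{l,\eta}$ with $\max_{l,\eta}|c_{l,\eta}|=1$. Multiplying the expansion of the second paragraph by $d_n$, using $\|R_{n,l}\|_\infty=o(D_n)$ to discard the remainders and $\|F(\cdot,G_n)-F(\cdot,G_n')\|_\infty/D_n\to 0$ to kill the left-hand side, we obtain in the limit, uniformly in $x$,
\[
\sum_{l=1}^{k_*}\sum_{\eta=0}^{m_l}c_{l,\eta}\,\frac{\partial^\eta F_0}{\partial\theta^\eta}(x\mid\theta_l^*)=0 .
\]
Differentiating in $x$ and using $\partial F_0/\partial x=f$ turns this into $\sum_{l=1}^{k_*}\sum_{\eta=0}^{m_l}c_{l,\eta}\,\frac{\partial^\eta f}{\partial\theta^\eta}(x\mid\theta_l^*)=0$ for $\nu$-almost every $x$, a nontrivial relation with $m_l\le k+k_0$ and the $\theta_l^*$ distinct---contradicting the $(k+k_0)$-strong identifiability of $\calF$. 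This establishes~\eqref{key_uniform_inequality}; Theorem~\ref{theorem:uniform_rates_strong_identifiable} then follows by the same argument as in the proof of Theorem~\ref{theorem:rate_MLE_Gaussian}, combining this lemma with Theorem~\ref{thm:density_estimation_rate_penalized}(ii), Proposition~\ref{prop:bded_mixing}, and the elementary bounds $\|F(\cdot,G)-F(\cdot,G')\|_\infty\le 2V(p_G,p_{G'})\le 2h(p_G,p_{G'})$.
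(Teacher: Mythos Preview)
Your high-level outline is correct and matches the paper's: assume a contradicting sequence, expand the CDF difference, show the coefficients of the expansion cannot all be $o(D_n)$, and conclude by strong identifiability. However, the way you implement the expansion has a genuine gap, and it is precisely the step you flag as ``the hard part''.

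Taylor expanding $F_0(\cdot\mid\theta_i^n)$ about $\theta_l^*$ to order $m_l$ does \emph{not} yield $\|R_{n,l}\|_\infty=o(D_n)$ in general. Consider a cell $l$ in which all atoms of $G_n$ and $G_n'$ lie at distance $\asymp\rho_n$ from $\theta_l^*$, but within distance $\asymp\rho_n^2$ of one another. Then the within-cell contribution to $\widetilde W$ is of order $(\rho_n^2)^{m_l}=\rho_n^{2m_l}$, while your remainder is of order $\rho_n^{m_l+\delta}$; the ratio $\rho_n^{\delta-m_l}$ diverges whenever $m_l>\delta$. Expanding to a higher fixed order does not help either, since one can take $\eta_n=\rho_n^\alpha$ for arbitrarily large $\alpha$. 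The same multi-scale phenomenon breaks your Vandermonde step: after rescaling by the largest radius, atoms clustered at a finer scale collapse together, and the resulting signed combination can vanish even though the original within-cell cost does not.

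The paper handles exactly this obstruction by importing the \emph{coarse-graining tree} of Heinrich and Kahn rather than a single-center Taylor expansion. Lemma~\ref{lem:equivalence_w_tilde} gives the correct asymptotic characterization
\[
\widetilde W(G_n,G_n')\asymp\max\Bigl\{\max_{1\le l\le k_*}\max_{J\in\mathrm{Desc}(\calJ_l)}|\pi_J|\,\tau_{J^\uparrow}^{|\calA_l|+|\calB_l|-1},\ \max_{1\le l\le k_*}|\pi_{\calJ_l}|\Bigr\},
\]
in terms of tree quantities, and Lemma~\ref{lem:uniform_expansion} (Lemma~7.4 of~\cite{heinrich2018}) provides the hierarchical expansion whose remainder is genuinely $o(\|a_l\|\tau_{\calJ_l}^{k+k_0})$. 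Combining these two lemmas gives $M_n/D_n\gtrsim 1$ directly; there is no Vandermonde step, because the tree already encodes the multi-scale cancellations. Your sketch would be completed by replacing the single Taylor expansion about $\theta_l^*$ with this tree-based expansion, and by proving (or citing) the two lemmas above rather than ``granting'' the remainder bound.
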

Taking Lemma~\ref{lemma:uniform_bound} for granted, notice that
$$\|F(\cdot, \hat G_n) - F(\cdot, G_0^n)\|_\infty
\leq h(p_{\hat G_n}, p_{G_0^n}).$$
Furthermore, under the conditions of Theorem~\ref{theorem:uniform_rates_strong_identifiable},
we may apply
Proposition~\ref{prop:bded_mixing} to deduce that 
there is an event $A_n$ and a constant $c > 1$ such that 
$\bbP(A_n^\cp) \leq c/n$ and 
$\hat p_i^n \geq 1/c$ for all $i\in[\hat k_n]$ over $A_n$. 
As in the proof of Theorem~\ref{theorem:rate_MLE_Gaussian}, 
we may therefore set $c_0' = c_0 \wedge c^{-1}$
and deduce that 
$\hat G_n \in \calO_{k,c_0'}(\Theta)$ over the event $A_n$. 
Therefore, by   Lemma~\ref{lemma:uniform_bound}
and Theorem~\ref{thm:density_estimation_rate_penalized}(ii),
\begin{align*}
\bbE \widetilde W(\hat G_n, G_0^n)
 = \bbE\left[ \widetilde W(\hat G_n, G_0^n) I_{A_n}\right] + 
    \bbE\left[\widetilde W(\hat G_n, G_0^n) I_{A_n^\cp}\right]  
 \lesssim  \bbE\left[ h(p_{\hat G_n}, p_{G_0^n}) I_{A_n}\right] + 1/n \lesssim \log n / \sqrt n.
\end{align*}
This proves the claim; it thus remains to prove the key Lemma~\ref{lemma:uniform_bound}. 

{\bf Proof of Lemma~\ref{lemma:uniform_bound}.} The proof of Lemma~\ref{lemma:uniform_bound} is a refinement of the proof of Theorem 6.3 in~\citet{heinrich2018} where we  carefully consider the behavior of individual mixing components and weights of the mixing measures involved. Notice that in the special case $k^* = 1$, the loss function
$\widetilde W$ is equal to $W_{k + k_0-1}^{k + k_0-1}$, and the claim
can be deduced identically as in~\citet{heinrich2018}. 
We therefore assume $k^* \geq 2$ throughout the sequel. 

To prove  inequality~\eqref{key_uniform_inequality}, we assume that it
does not hold. Therefore,  there exist sequences  $G_{n} \in \mathcal{O}_{k,c_0}(\Theta)$, $G_{n}' \in \mathcal{E}_{k_{0},c_0}(\Theta)$ such that $\widetilde{W}(G_{n}, G_{*}) \to 0$, $\widetilde{W}(G_{n}', G_{*}) \to 0$, and $\|F(\cdot,G_{n}) - F(\cdot, G_{n}')\|_\infty/\widetilde{W}(G_{n},G_{n}') \to 0$ as $n \to \infty$.
% In this proof, we consider the most challenging settings when all mixing components of $G_{n}$ and $G_{n}'$ converge only to the mixing components of $G_{*}$. The proof for other settings of $G_{n}$ and $G_{n}'$ can be argued in the similar fashion. 
Similarly to the proof of
Theorem~\ref{lemma:key_bound_strong_identifiability}, we can find subsequences of $G_{n}, G_{n}'$ such that 
$\mathcal{A}_{j}(G_{n}),\mathcal{A}_{j}(G_{n}')$ do not change with $n\geq 1$, for all $1 \leq j \leq k_{0}$. Without loss of generality, we therefore
assume that $\mathcal{A}_{j} = \mathcal{A}_{j}(G_{n})$ and $\mathcal{A}_{j}' = \mathcal{A}_{j}(G_{n}')$ 
are constant with $n\geq 1$, for all $j \in [k_{0}]$. 
Furthermore, 
up to taking subsequences once again, 
we may assume that $G_n$ has exact order $\bar k \leq k$ for
all $n \geq 1$, and we denote $G_{n} = \sum_{i = 1}^{\bar{k}} p_{i}^{n} \delta_{\theta_{i}^{n}}$ and $G_{n}' = \sum_{i = 1}^{k_{0}} (p_{i}^{n})' \delta_{(\theta_{i}^{n})'}$. 
Now, define
$$(\omega_{i}^{n}, \nu_{i}^{n}) 
 = \begin{cases} 
 (p_{i}^{n}, \theta_{i}^{n}), & 1 \leq  i \leq \bar{k} \\ (-(p_{i - \bar{k}}^{n})', (\theta_{i - \bar{k}}^{n})'),
 & \bar{k} + 1 \leq  i \leq \bar{k} + k_{0},
 \end{cases}$$
and let $\mathcal{B}_{j} = \mathcal{A}_{j}' + \bar{k} = \{i + \bar{k}: i \in \mathcal{A}_{j}'\}$. Based on this notation, we may rewrite $\widetilde{W}(G_{n}, G_{n}')$ as follows:
\begin{align*}
    \widetilde{W}(G_{n}, G_{n}') = \inf_{\boldsymbol{q} \in \Pi(G_{n}, G_{n}')} \biggr\{ \sum_{l = 1}^{k_{*}} \sum_{(i, j) \in \mathcal{A}_{l} \times \mathcal{B}_{l}} q_{i(j - \bar{k})} |\nu_{i}^{n} - \nu_{j}^{n}|^{|\mathcal{A}_{l}| + |B_{l}|  - 1} + \sum_{(i, j) \not \in \cup_{l = 1}^{k_{*}} \mathcal{A}_{l} \times \mathcal{B}_{l}} q_{i(j - \bar{k})} \biggr\}.
\end{align*}

From Lemma 7.1 in~\citet{heinrich2018}, we can find a finite number $(S+1)$ of scaling sequences $0 \equiv \tau_{0}(n) < \tau_{1}(n) < \ldots < \tau_{S}(n) \equiv 1$, where $\tau_{s}(n) = o(\tau_{s + 1}(n))$, such that 
for any $j, j' \in \{1, 2, \ldots, \bar k + k_0 \}$, we can find a unique
integer $s(j, j') \in \{0, 1, \ldots, S\}$ satisfying $|\nu_{j}^n - \nu_{j'}^n| \asymp \tau_{s(j, j')}(n)$. 
In the sequel, we shall sometimes omit the dependence on $n$
in the preceding notation. 
It can be inferred from its definition
that $s(\cdot,\cdot)$ defines an ultrametric on the set $\{1, 2, \ldots, \bar{k} + k_{0} \}$.
As in~\citet{heinrich2018}, this allows us to construct
a coarse-graining tree over the set of balls
in $\{1,\dots,\bar k + k_0\}$ relative to the metric $s$. 
In the interest of being self-contained, we recall their definition
as follows.
\begin{definition}[Definition 7.2 \citep{heinrich2018}]
The coarse-graining tree $\calT$
is the collection of distinct balls
$J = \{i \in \{1, \dots, \bar k + k_0\}: s(i,j) \leq s\}$, called
nodes, for $j=1, \dots, \bar k + k_0$ and $s = 0, \dots, S$. 
Moreover, 
\begin{itemize}
\item The root of $\calT$ is $\calJ_{\mathrm{root}} = \{1, \dots, \bar k + k_0\}$. 
\item $J^\uparrow \in \calT$ is called the parent of a node
$J \in \calT$ if the following implication holds
for all $I \in \calT$,
$$(J \subseteq I \subsetneq J^\uparrow, I \in \calT) \Longrightarrow I = J.$$
\item The set of children of a node $J \in \calT$ is 
$\mathrm{Child}(J) = \{I \in \calT: I^\uparrow = J\}$. 
\item The set of descendants of a node $J \in \calT$ is $\mathrm{Desc}(J) = \{I \in \calT: I^\uparrow \subseteq J\}$. 
\item The diameter of a node $J \in \calT$ is $s(J) = \max_{j,j' \in J} s(j,j')$.
\end{itemize}
\end{definition} 
Since $k_* \geq 2$, it is a straightforward consequence of these definitions
that  the cardinality of $\mathrm{Child}(\calJ_{\mathrm{root}})$
is exactly $k_*$, and we shall write
$\mathrm{Child}(\calJ_{\mathrm{root}}) = \{\calJ_1, \dots, \calJ_{k_*}\}.$
Furthermore, note that
\begin{equation} 
\label{eq:child_voronoi}
\calJ_l = \calA_l \cup \calB_l, \quad l=1, \dots, k_*.
\end{equation}
Now, let $\pi_{J} = \sum_{j \in J} \omega_{j}^n$ and $\tau_{J} = \tau_{s(J)}(n)$,
for all $J \in \calT$. We claim that the following key 
asymptotic equivalence holds.
\begin{lemma} 
\label{lem:equivalence_w_tilde}
We have, 
\begin{align}
    \widetilde{W}(G_{n},G_{n}') \asymp \max \left\{\max \limits_{1 \leq l \leq k_{*}} \max_{J \in \mathrm{Desc}(\mathcal{J}_{l})} |\pi_{J}| \tau_{J^\uparrow}^{|\mathcal{A}_{l}| + |\mathcal{B}_{l}| - 1}, \max \limits_{1 \leq l \leq k_{*}} |\pi_{\mathcal{J}_{l}}| \right\} . \label{eq:asymptotic_equivalence}
\end{align}
\end{lemma}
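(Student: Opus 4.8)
The plan is to prove~\eqref{eq:asymptotic_equivalence} by sandwiching $\widetilde W(G_n,G_n')$ between constant multiples of
$$\Psi_n := \max\Big\{\max_{1\le l\le k_*}\ \max_{J\in\mathrm{Desc}(\calJ_l)} |\pi_J|\,\tau_{J^\uparrow}^{e_l},\ \ \max_{1\le l\le k_*}|\pi_{\calJ_l}|\Big\},\qquad e_l:=|\calA_l|+|\calB_l|-1,$$
establishing the lower bound $\widetilde W(G_n,G_n')\gtrsim\Psi_n$ for \emph{every} transference plan and the upper bound by an explicit recursive construction of a near-optimal plan. All implied constants will depend only on $k,k_0,k_*$ and on the finite combinatorial type of $\calT$ and of the Voronoi partitions, hence are uniform in $n$ along the extracted subsequence. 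The single geometric fact needed is a consequence of the ultrametric property of $s$ afforded by Lemma~7.1 of~\cite{heinrich2018}: if $J\in\calT$, $i\in J$ and $j\notin J$, then $s(i,j)\ge s(J^\uparrow)$, and hence $|\nu_i^n-\nu_j^n|\gtrsim\tau_{J^\uparrow}$. Indeed, the ball $\{i':s(i',i)\le s(i,j)\}$ is a node of $\calT$ that contains $j$ and contains $J$ (since $j\notin J$ forces $s(i,j)>s(J)$); being a node strictly containing $J$, it contains $J^\uparrow$, and applying the ultrametric inequality to pairs of elements of $J^\uparrow\ni i$, all within $s(i,j)$ of $i$, gives $s(J^\uparrow)\le s(i,j)$.

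\textbf{Lower bound.} Fix $\boldsymbol q\in\Pi(G_n,G_n')$. For any node $J\in\calT$, the marginal constraints force the total $\boldsymbol q$-mass transported between the atoms indexed by $J$ and those indexed by its complement to be at least $|\pi_J|$, since $\pi_J$ is the signed mass of $G_n-G_n'$ carried by the atoms in $J$. Applying this with $J=\calJ_l$: every index in $\calJ_l=\calA_l\cup\calB_l$ carries Voronoi label $l$, so a transported pair straddling $\calJ_l$ is a cross-cell pair of cost $1$, whence the $\boldsymbol q$-cost is $\ge|\pi_{\calJ_l}|$. Applying it with $J\in\mathrm{Desc}(\calJ_l)$: a transported pair straddling $J$ is either cross-cell, of cost $1\ge\tau_{J^\uparrow}^{e_l}$, or a within-cell-$l$ pair with one endpoint in $J\subseteq\calJ_l$ and one in $\calJ_l\setminus J$, of cost $|\nu_i^n-\nu_j^n|^{e_l}\gtrsim\tau_{J^\uparrow}^{e_l}$ by the displayed fact; either way the $\boldsymbol q$-cost is $\gtrsim|\pi_J|\tau_{J^\uparrow}^{e_l}$. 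Maximizing over $l$ and $J$ gives $\widetilde W(G_n,G_n')\gtrsim\Psi_n$.

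\textbf{Upper bound and the main obstacle.} Here one builds a plan $\boldsymbol q$ by processing $\calT$ from the leaves upward: at a node $J$, after maximally matching mass inside each child of $J$, one matches the surviving residuals of the children of $J$ against one another; these residuals are carried by indices lying in the common node $J$, hence at distance $\lesssim\tau_J$, and when $J\subseteq\calJ_l$ the pairs so created are within-cell-$l$ pairs of cost $\lesssim\tau_J^{e_l}$. Writing $\mathrm{cost}(J)$ for the cost of fully matching the mass inside $J$, the recursion $\mathrm{cost}(J)\lesssim\sum_{I\in\mathrm{Child}(J)}\big(\mathrm{cost}(I)+|\pi_I|\,\tau_J^{e_l}\big)$, together with $I^\uparrow=J$ for $I\in\mathrm{Child}(J)$, unfolds to $\mathrm{cost}(\calJ_l)\lesssim\sum_{I\in\mathrm{Desc}(\calJ_l)}|\pi_I|\tau_{I^\uparrow}^{e_l}$; matching the final residuals $\pi_{\calJ_1},\dots,\pi_{\calJ_{k_*}}$ (of total mass $0$) across distinct top-level cells adds cost $\lesssim\sum_l|\pi_{\calJ_l}|$, and since $\sum_l(\cdot)\le k_*\max_l(\cdot)$ the total is $\lesssim\Psi_n$; one also checks that the constructed $\boldsymbol q$ has the marginals of $G_n$ and $G_n'$. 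The delicate point, and the place where essentially all the work concentrates, is precisely this upper bound: the recursive plan must be organized so that the mass transported at each level of $\calT$ stays within the correct Voronoi cell and travels no farther than that level's diameter scale, so that its cost is charged to exactly the term $|\pi_I|\tau_{I^\uparrow}^{e_l}$, and one must verify the marginal feasibility of the plan while keeping all constants independent of $n$; this is a refinement of the construction in the proof of Theorem~6.3 of~\cite{heinrich2018}. The lower bound, by contrast, reduces cleanly to the marginal constraints and the ultrametric structure of the coarse-graining tree recalled above.
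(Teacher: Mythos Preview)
The proposal is correct and follows essentially the same route as the paper. Your lower bound is the paper's argument verbatim: for each node $J$, bound the $\widetilde W$-cost from below by the cost of the mass that any coupling must ship across $\partial J$, distinguishing $J=\calJ_l$ (cross-cell cost $1$) from $J\in\mathrm{Desc}(\calJ_l)$ (within-cell cost $\gtrsim\tau_{J^\uparrow}^{e_l}$). For the upper bound, the paper invokes Lemma~B.2 of \cite{heinrich2018} to obtain a coupling $\bar{\boldsymbol q}$ with $\sum_{(i,j)\in(\calA_l\cap J)\times(\calB_l\cap J)}\bar q_{i(j-\bar k)}=\min\{p_J,p_J'\}$ for every node $J$, and then bounds $W(J,J;\bar{\boldsymbol q})$ by induction up the tree; your bottom-up ``match maximally within each child, then match the residuals across children'' construction is exactly the coupling furnished by that lemma, and your recursion $\mathrm{cost}(J)\lesssim\sum_{I\in\mathrm{Child}(J)}(\mathrm{cost}(I)+|\pi_I|\tau_J^{e_l})$ is the same induction written in cost form rather than in coupling form. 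The only cosmetic difference is that the paper outsources the existence and marginal-feasibility of $\bar{\boldsymbol q}$ to \cite{heinrich2018}, whereas you sketch the construction directly and flag the verification of feasibility as the point requiring care; either way the argument is the same.
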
  
The proof of Lemma~\ref{lem:equivalence_w_tilde}
is deferred to Section~\ref{app:pf_equivalence_w_tilde}. 
We next show how this
Lemma may be used to lower bound the
expansion of $F(\cdot,G_n)$ around
$F(\cdot,G_n')$. We begin with
the following result, which is a simplified statement
of Lemma 7.4 of~\citet{heinrich2018}.
In the sequel, for any node $J \in \calT$, let 
$\nu_J$ denote an arbitrary but fixed element of 
$\{\nu_j^n:j\in J\}$. 
\begin{lemma}[Lemma 7.4~\citet{heinrich2018}]
\label{lem:uniform_expansion}
For every $l=1, \dots, k_*$, there exists
a vector $a_l = (a_l(p))_{0 \leq p \leq k+k_0}$
and a remainder $R_l$ such that 
for all $x\in \bbR$, 
$$\sum_{j \in \calJ_l} \omega_j F(x,\nu_j^n)
 = \sum_{p=0}^{k+k_0} a_l(p) \tau_{\calJ_l}^p F^{(p)}(x,\nu_{\calJ_l})
  + R_l(x),$$
Furthermore, the following assertions hold.
\begin{enumerate} 
\item[(i)] We have $a_l(0) = \pi_{\calJ_l}$, and, 
$$\|a_l\| \asymp 
\max_{0 \leq p \leq |\calJ_l|-1} |a_l(p)| 
\gtrsim \max_{J \in \mathrm{Desc}(\calJ_l)} 
|\pi_J| \left(\frac{\tau_{J^\uparrow}}{\tau_{\calJ_l}}\right)^{|\calJ_l|-1}.$$
\item[(ii)] We have,
$
\|R_l\|_\infty = o(\|a_l\| \tau_{\calJ_l}^{k+k_0}).$ \end{enumerate} 
\end{lemma}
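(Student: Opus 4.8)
The statement is the specialization to the present setting of Lemma 7.4 of~\cite{heinrich2018}: the expansion point is $\nu_{\calJ_l}$, the expansion order is $k+k_0$, and the nodes of interest are the root-children $\calJ_1,\dots,\calJ_{k_*}$. Accordingly, the plan is to Taylor-expand $\theta\mapsto F(x,\theta)=\int_{-\infty}^x f(t|\theta)\,d\nu(t)$ about $\nu_{\calJ_l}$ and organize the resulting coefficients along the coarse-graining tree $\calT$; most of the combinatorial bookkeeping is imported essentially verbatim from~\cite{heinrich2018}, so I would describe the skeleton and isolate the one estimate that needs care. First, by condition~\hyperref[assm:holder]{\textbf{A($k+k_0$)}}, the map $\theta\mapsto F(x,\theta)$ lies in $\calC^{k+k_0+\delta}(\Theta)$ with all derivatives bounded uniformly in $x\in\bbR$, so for each $j\in\calJ_l$ a Taylor expansion about $\nu_{\calJ_l}$ to order $k+k_0$ gives $F(x,\nu_j^n)=\sum_{p=0}^{k+k_0}\frac{1}{p!}(\nu_j^n-\nu_{\calJ_l})^p F^{(p)}(x,\nu_{\calJ_l})+r_{l,j}(x)$ with $\|r_{l,j}\|_\infty\lesssim|\nu_j^n-\nu_{\calJ_l}|^{k+k_0+\delta}$. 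Since $\calJ_l$ is a ball of the ultrametric $s$ of diameter $s(\calJ_l)$, one has $|\nu_j^n-\nu_{\calJ_l}|\lesssim\tau_{\calJ_l}$ for every $j\in\calJ_l$; multiplying by $\omega_j^n$, summing over $j\in\calJ_l$, and collecting powers of $\tau_{\calJ_l}$ yields the claimed identity with $a_l(p)=\tau_{\calJ_l}^{-p}\sum_{j\in\calJ_l}\frac{1}{p!}\omega_j^n(\nu_j^n-\nu_{\calJ_l})^p$ and $R_l=\sum_{j\in\calJ_l}\omega_j^n r_{l,j}$. The identity $a_l(0)=\sum_{j\in\calJ_l}\omega_j^n=\pi_{\calJ_l}$ and the bounds $|a_l(p)|\lesssim\sum_{j\in\calJ_l}|\omega_j^n|\lesssim 1$ and $\|R_l\|_\infty\lesssim\tau_{\calJ_l}^{k+k_0+\delta}$ are then immediate.

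For part (i), the comparison $\|a_l\|\asymp\max_{0\le p\le|\calJ_l|-1}|a_l(p)|$ is a Vandermonde-type fact: the normalized positions $(\nu_j^n-\nu_{\calJ_l})/\tau_{\calJ_l}$, $j\in\calJ_l$, concentrate (along a subsequence) at a finite set of cluster values governed by $\mathrm{Child}(\calJ_l)$, so the power sums of order at least $|\calJ_l|$ are bounded linear combinations of those of order at most $|\calJ_l|-1$; one makes this quantitative by recursing through $\mathrm{Child}(\calJ_l)$ exactly as in~\cite{heinrich2018}. For the lower bound, given $J\in\mathrm{Desc}(\calJ_l)$ the atoms indexed by $J$ cluster within $O(\tau_{J^\uparrow})$, and contracting the expansion against the degree-$(|\calJ_l|-1)$ polynomial adapted to that cluster extracts a contribution of size $|\pi_J|(\tau_{J^\uparrow}/\tau_{\calJ_l})^{|\calJ_l|-1}$, forcing $\|a_l\|$ to dominate it — again this is precisely the coarse-graining estimate of~\cite{heinrich2018}. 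Part (ii) then follows by combining this lower bound, applied to a descendant $J$ attaining the maximum, with $\|R_l\|_\infty\lesssim\tau_{\calJ_l}^{k+k_0+\delta}$ and with the strict scale separation $\tau_s(n)=o(\tau_{s+1}(n))$ together with $|\calJ_l|\le\bar k+k_0\le k+k_0$, which guarantees that the Taylor error sits strictly below $\|a_l\|\tau_{\calJ_l}^{k+k_0}$.

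The main obstacle is the non-degeneracy bound in part (i): obtaining the sharp exponent $|\calJ_l|-1$ in terms of the descendant masses $\pi_J$ and scale ratios $\tau_{J^\uparrow}/\tau_{\calJ_l}$, rather than a cruder exponent, is exactly what makes the downstream argument close, and it is the step where the ultrametric/coarse-graining structure is indispensable; everything else is routine Taylor estimation. I would therefore present this lemma by citing and specializing Lemma 7.4 of~\cite{heinrich2018} rather than reproving the recursion in full. With Lemma~\ref{lem:uniform_expansion} and Lemma~\ref{lem:equivalence_w_tilde} in hand, the proof of Lemma~\ref{lemma:uniform_bound} is then finished by writing $F(\cdot,G_n)-F(\cdot,G_n')=\sum_{l=1}^{k_*}\sum_{p=0}^{k+k_0}a_l(p)\tau_{\calJ_l}^p F^{(p)}(\cdot,\nu_{\calJ_l})+\sum_{l}R_l$, dividing by $M_n:=\max_{l,p}|a_l(p)|\tau_{\calJ_l}^p$ — which is $\gtrsim\widetilde W(G_n,G_n')$ by the two lemmas, so that $\|F(\cdot,G_n)-F(\cdot,G_n')\|_\infty/M_n\to 0$ under the standing contradiction hypothesis — passing to a subsequence along which the normalized coefficients converge (not all to zero, the largest in modulus equal to $1$) and $\nu_{\calJ_l}\to\theta_l^*$, and invoking $(k+k_0)$-strong identifiability of $\calF$ to force every limiting coefficient to vanish, a contradiction.
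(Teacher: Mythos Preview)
Your proposal is correct and matches the paper's approach: the paper does not prove Lemma~\ref{lem:uniform_expansion} at all but simply cites it as a simplified restatement of Lemma~7.4 of~\cite{heinrich2018}, and you reach the same conclusion (``I would therefore present this lemma by citing and specializing Lemma~7.4 of~\cite{heinrich2018} rather than reproving the recursion in full''). The Taylor-expansion sketch and the Vandermonde/coarse-graining outline you provide are an accurate summary of the Heinrich--Kahn argument, and your downstream description of how the lemma feeds into the proof of Lemma~\ref{lemma:uniform_bound} agrees with the paper's actual usage.
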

By Lemma~\ref{lem:uniform_expansion}, we have 
for all $x \in \bbR$, 
\begin{align*}
F(x,G_n) - F(x,G_n')
 = \sum_{l=1}^{k_*} \sum_{j\in \calJ_l} \omega_j F(x,\nu_j^n)
 = \sum_{l=1}^{k_*}  
 \sum_{p=0}^{k+k_0} a_l(p) \tau_{\calJ_l}^p F^{(p)}(x,\nu_{\calJ_l})
  + \sum_{l=1}^{k_*} R_l(x).
\end{align*}
Let $M_{n,l} = \max_{0 \leq p \leq |\calJ_l| - 1}
 |a_l(p)| \tau_{\calJ_l}^p$ 
 for any $l=1,\dots, k_*$, and let $M_n = \max_{1 \leq l \leq k_*} M_{n,l}$. 
 By Lemma~\ref{lem:uniform_expansion}(i), we have 
\begin{equation}
\label{eq:M_nl_1_pf_si}
M_{n,l} \geq |a_l(0)| = |\pi_{\calJ_l}|,
\end{equation}
and additionally,
\begin{equation} 
\label{eq:M_nl_2_pf_si}
M_{n,l} \gtrsim \max_{J \in \mathrm{Desc}(\calJ_l)}
|\pi_J| \left(\frac{\tau_{J^\uparrow}}{\tau_{\calJ_l}}\right)^{|\calJ_l|-1}
\min_{0 \leq p \leq |\calJ_l|-1} \tau_{\calJ_l}^p
 = \max_{J \in \mathrm{Desc}(\calJ_l)}
|\pi_J| \tau_{J^\uparrow}^{|\calJ_l|-1}.
\end{equation}
Let $D_n = \widetilde W(G_n,G_n')$. 
By Lemma~\ref{lem:equivalence_w_tilde}
and equations~\eqref{eq:M_nl_1_pf_si}--\eqref{eq:M_nl_2_pf_si}, we deduce that $M_n/D_n \gtrsim 1$. 
Additionally, by Lemma~\ref{lem:uniform_expansion}(ii), 
we have 
$\|\sum_l R_l \|_\infty = o(M_n)$. 
Therefore, setting $d_n = D_n/M_n$, we obtain that
there exist
finite real numbers $\alpha_{lp} \in \bbR$, 
not all of which are zero, such that,
$$\left\|d_n \frac{F(\cdot,G_n) - F(\cdot,G_n')}{D_n}
 - \sum_{l=1}^{k_*} \sum_{p=0}^{k+k_0} 
 \alpha_{lp} F^{(p)}(\cdot, \theta_l^*)
 \right\|_\infty \to 0.$$
On the other hand, since $d_n \lesssim 1$, we have
by assumption that
$d_n \|F(\cdot,G_n) - F(\cdot,G_n')\|_\infty/D_n \to 0$, thus we must obtain
$$\left\| \sum_{l=1}^{k_*} \sum_{p=0}^{k+k_0}
\alpha_{lp} F^{(p)} (\cdot,\theta_l^*)
\right\|_\infty = 0.$$
By the strong identifiability condition of order
$k+k_0$, it must follow that $\alpha_{lp} = 0$
for all $l=1, \dots, k_*$ and $p=0, \dots, k+k_0$,
which is a contradiction.
The claim thus follows.\qed 

\subsubsection{Proof of Lemma~\ref{lem:equivalence_w_tilde}.} 
\label{app:pf_equivalence_w_tilde}
We first prove the lower bound of equation~\eqref{eq:asymptotic_equivalence}. For any coupling $\boldsymbol{q} \in 
\Pi(G_{n},G_{n}')$ and for any  $J, J'\in \calT$, we denote
\begin{align*}
    W(J, J'; \boldsymbol{q}) = \sum_{l = 1}^{k_{*}} \sum_{(i, j) \in (\mathcal{A}_{l} \cap J) \times (\mathcal{B}_{l} \cap J')} q_{i(j - \bar{k})} |\nu_{i}^{n} - \nu_{j}^{n}|^{|\mathcal{A}_{l}| + |\mathcal{B}_{l}|  - 1} + \sum_{(i, j) \in \mathcal{M}(J, J') \backslash \cup_{l = 1}^{k_{*}} (\mathcal{A}_{l} \cap J) \times (\mathcal{B}_{l} \cap J')} q_{i(j - \bar{k})}, 
\end{align*}
where $\mathcal{M}(J, J') = (J \cap \{1, \ldots, \bar{k}\}) \times (J' \cap \{\bar{k} + 1, \ldots, \bar{k} + k_{*}\})$. From the above definition, we obtain that $\widetilde{W}(G_{n}, G_{n}') = \inf_{\boldsymbol{q} \in \Pi(G_{n}, G_{n}')} W(\mathcal{J}_{\text{root}}, \mathcal{J}_{\text{root}}; \boldsymbol{q})$. Now, for any coupling $\boldsymbol{q}$ between $G_{n}$ and $G_{n}'$ and for any node $J$ in the tree $\mathcal{T}$, we obtain that
\begin{align*}
    W(\mathcal{J}_{\text{root}}, \mathcal{J}_{\text{root}}; \boldsymbol{q}) \geq W(J, J^{c}; \boldsymbol{q}) + W(J^{c}, J; \boldsymbol{q}).
\end{align*}
Since $|v_{i}^{n} - v_{j}^{n}| \gtrsim \tau_{J\uparrow}$ for any $(i, j) \in J \times J^{c}$ or $(i, j) \in J^{c} \times J$, it follows that  
\begin{align}
    W(J, J^{c}; \boldsymbol{q}) + W(J^{c}, J; \boldsymbol{q}) & \gtrsim \sum_{l = 1}^{k_{*}} \left[ \sum_{(i, j) \in (\mathcal{A}_{l} \cap J) \times (\mathcal{B}_{l} \cap J^{c})} q_{i(j - \bar{k})} + \sum_{(i, j) \in (\mathcal{A}_{l} \cap J^{c}) \times (\mathcal{B}_{l} \cap J)} q_{i(j - \bar{k})} \right] \tau_{J^\uparrow}^{|\mathcal{A}_{l}| + |\mathcal{B}_{l}|  - 1} \nonumber \\
    & \hspace{- 6 em} + \left[\sum_{(i, j) \in \mathcal{M}(J, J^{c}) \backslash \cup_{l = 1}^{k_{*}} (\mathcal{A}_{l} \cap J) \times (\mathcal{B}_{l} \cap J^{c})} q_{i(j - \bar{k})} + \sum_{(i, j) \in \mathcal{M}(J^{c}, J) \backslash \cup_{l = 1}^{k_{*}} (\mathcal{A}_{l} \cap J^{c}) \times (\mathcal{B}_{l} \cap J)} q_{i(j - \bar{k})} \right] : = \mathcal{C}, \label{eq:lower_bound_proof}
\end{align}
There are two settings of node $J$: 

\noindent
\textbf{Case 1:} $J \in \mathrm{Child}(\calJ_{\mathrm{root}})$. 
In this case,  $J = \mathcal{J}_{l}$ for some $l \in [k_{*}]$. 
We deduce from equation~\eqref{eq:child_voronoi}
that $\mathcal{A}_{l} \cap J^{c} = \mathcal{B}_{l} \cap J^{c} = \emptyset$. Therefore, from equation~\eqref{eq:lower_bound_proof}, we obtain that
\begin{align}
\mathcal{C} 
 = \sum_{(i, j) \in \mathcal{M}(J, J^{c})} q_{i(j - \bar{k})} + \sum_{(i, j) \in \mathcal{M}(J^{c}, J)} q_{i(j - \bar{k})} 
 \geq \left| \sum_{(i, j) \in \mathcal{M}(J, J \cup J^c)} q_{i(j - \bar{k})} - 
 \sum_{(i, j) \in \mathcal{M}(J\cup J^c, J)} q_{i(j - \bar{k})} 
\right| = |\pi_J|.  
\label{eq:lower_bound_proof_first}
\end{align}
\textbf{Case 2:} $J \in \mathrm{Desc}(\calJ_l)$ for some $l \in [k_{*}]$. Under this case, we can verify that
\begin{align}
    \mathcal{C}
    % & = \left[ \sum_{(i, j) \in (\mathcal{A}_{l} \cap J) \times (\mathcal{B}_{l} \cap J^{c})} q_{i(j - \bar{k})} + \sum_{(i, j) \in (\mathcal{A}_{l} \cap J^{c}) \times (\mathcal{B}_{l} \cap J)} q_{i(j - \bar{k})} \right] \tau_{J\uparrow}^{|\mathcal{A}_{l}| + |\mathcal{B}_{l}|  - 1} \nonumber \\
    % & + \left[\sum_{(i, j) \in \mathcal{M}(J, J^{c}) \backslash \cup_{l = 1}^{k_{*}} (\mathcal{A}_{l} \cap J) \times (\mathcal{B}_{l} \cap J^{c})} q_{i(j - \bar{k})} + \sum_{(i, j) \in \mathcal{M}(J^{c}, J) \backslash \cup_{l = 1}^{k_{*}} (\mathcal{A}_{l} \cap J^{c}) \times (\mathcal{B}_{l} \cap J)} q_{i(j - \bar{k})} \right] \nonumber \\
    & \gtrsim \Bigg[ \sum_{(i, j) \in (\mathcal{A}_{l} \cap J) \times (\mathcal{B}_{l} \cap J^{c})} q_{i(j - \bar{k})} + \sum_{(i, j) \in (\mathcal{A}_{l} \cap J^{c}) \times (\mathcal{B}_{l} \cap J)} q_{i(j - \bar{k})} + \sum_{(i, j) \in \mathcal{M}(J, J^{c}) \backslash \cup_{l = 1}^{k_{*}} (\mathcal{A}_{l} \cap J) \times (\mathcal{B}_{l} \cap J^{c})} q_{i(j - \bar{k})} \nonumber \\
    & + \sum_{(i, j) \in \mathcal{M}(J^{c}, J) \backslash \cup_{l = 1}^{k_{*}} (\mathcal{A}_{l} \cap J^{c}) \times (\mathcal{B}_{l} \cap J)} q_{i(j - \bar{k})} \Bigg] \tau_{J^\uparrow}^{|\mathcal{A}_{l}| + |\mathcal{B}_{l}|  - 1} 
     \gtrsim |\pi_{J}| \tau_{J^\uparrow}^{|\mathcal{A}_{l}| + |\mathcal{B}_{l}|  - 1}. \label{eq:lower_bound_proof_third}
\end{align}
Combining the results of equations~\eqref{eq:lower_bound_proof},~\eqref{eq:lower_bound_proof_first}, and~\eqref{eq:lower_bound_proof_third}, we obtain the lower bound that
\begin{align*}
    \widetilde{W}(G_{n},G_{n}') \gtrsim \max \left\{\max \limits_{1 \leq l \leq k_{*}} \max_{J \in \text{Desc}(\mathcal{J}^{l})} |\pi_{J}| \tau_{J^\uparrow}^{|\mathcal{A}_{l}| + |\mathcal{B}_{l}| - 1}, \max \limits_{1 \leq l \leq k_{*}} |\pi_{\mathcal{J}^{l}}| \right\}.
\end{align*}
Therefore, to obtain the conclusion of claim~\eqref{eq:asymptotic_equivalence}, it remains
to verify the upper bound of $\widetilde{W}(G_{n}, G_{n}')$ in that claim. 
Based on Lemma B.2 of~\citet{heinrich2018}, we can construct a coupling $\bar{\boldsymbol{q}}$ between $G_{n}$ and $G_{n}'$ such that for any node $J\in\mathcal{T}$, we have
\begin{align}
\label{eq:qbar_def_min_pJ}
    \sum_{l = 1}^{k_{*}} \sum_{(i, j) \in (\mathcal{A}_{l} \cap J) \times (\mathcal{B}_{l} \cap J)} \bar{q}_{i(j - \bar{k})} = \min \{p_{J}, p_{J}'\},
\end{align}
where $p_{J} = \sum_{i \in J \cap \{1, \ldots, \bar{k}\}} p_{i}^{n}$ and $p_{J}' = \sum_{i \in J \cap \{\bar{k} + 1, \ldots, \bar{k} + k_{0}\}} (p_{i-\bar{k}}^{n})'$. Given the coupling $\bar{\boldsymbol{q}}$, we first prove that for any node $J$ that is a descendant of $\mathcal{J}^{l}$ or equal to $\mathcal{J}^{l}$ for some $l \in [k_{*}]$, we have
\begin{align}
    W(J, J; \bar{\boldsymbol{q}}) \lesssim \max_{K \in \text{Desc}(J)} |\pi_{K}| \tau_{K^\uparrow}^{|\mathcal{A}_{l}| + |\mathcal{B}_{l}| - 1}. \label{eq:upper_bound_proof}
\end{align}
We prove the inequality~\eqref{eq:upper_bound_proof} by induction. When $J$ is an end node of $\mathcal{J}^{l}$, $W(J, J; \bar{\boldsymbol{q}}) = 0$; therefore, inequality~\eqref{eq:upper_bound_proof} holds true. We assume that this inequality holds for any  node 
$K$ which is a child 
of a given node $J$. We now proceed to show that this inequality also holds for $J$. In fact, we have the following identity:
\begin{align*}
    W(J, J; \bar{\boldsymbol{q}}) = \sum_{K \in \text{Child}(J)} \biggr(W(K,K; \bar{\boldsymbol{q}}) + \sum_{K' \neq K; K' \in \text{Child}(J)} W(K, K'; \bar{\boldsymbol{q}}) \biggr).
\end{align*}
Note that, for any $K$ and $K'$ that are children of node $J$, we have
\begin{align*}
    W(K, K'; \bar{\boldsymbol{q}}) = \sum_{(i, j) \in (\mathcal{A}_{l} \cap K) \times (\mathcal{B}_{l} \cap K')} \bar{q}_{i(j - \bar{k})} |\nu_{i}^{n} - \nu_{j}^{n}|^{|\mathcal{A}_{l}| + |\mathcal{B}_{l}|  - 1}.
\end{align*}
From the induction hypothesis, we obtain that $W(K, K; \bar{\boldsymbol{q}}) \lesssim \max_{Q \in \text{Desc}(K)} |\pi_{Q}| \tau_{Q^\uparrow}^{|\mathcal{A}_{l}| + |\mathcal{B}_{l}| - 1}$. Furthermore, for any $K' \neq K$ and $K' \in \text{Child}(J)$, we find that
\begin{align*}
    W(K, K'; \bar{\boldsymbol{q}}) \lesssim \biggr(\sum_{(i, j) \in (\mathcal{A}_{l} \cap K) \times (\mathcal{B}_{l} \cap K')} \bar{q}_{i(j - \bar{k})}\biggr) \tau_{J}^{|\mathcal{A}_{l}| + |\mathcal{B}_{l}|  - 1} \lesssim |\pi_{K}| \tau_{J}^{|\mathcal{A}_{l}| + |\mathcal{B}_{l}|  - 1},
\end{align*}
where the bound on the first factor follows from
equation~\eqref{eq:qbar_def_min_pJ}. 
Collecting the above results, we arrive at $W(J, J; \bar{\boldsymbol{q}}) \lesssim \max_{K \in \text{Desc}(J)} |\pi_{K}| \tau_{K^\uparrow}^{|\mathcal{A}_{l}| + |\mathcal{B}_{l}| - 1}$. Therefore,  inequality~\eqref{eq:upper_bound_proof} is proved for any node $J$ that is a descendant of $\mathcal{J}^{l}$ or equal to $\mathcal{J}^{l}$ for some $l \in [k_{*}]$. 

Now, we proceed to prove the following inequality
\begin{align}
    W(\mathcal{J}_{\text{root}}, \mathcal{J}_{\text{root}}; \bar{\boldsymbol{q}}) \lesssim \max \left\{\max \limits_{1 \leq l \leq k_{*}} \max_{J \in \text{Desc}(\mathcal{J}^{l})} |\pi_{J}| \tau_{J^\uparrow}^{|\mathcal{A}_{l}| + |\mathcal{B}_{l}| - 1}, \max \limits_{1 \leq l \leq k_{*}} |\pi_{\mathcal{J}^{l}}|\right\}. \label{eq:upper_bound_first}
\end{align}
In fact, we have
\begin{align*}
    W(\mathcal{J}_{\text{root}}, \mathcal{J}_{\text{root}}; \bar{\boldsymbol{q}}) = \sum_{l = 1}^{k_{*}} \biggr(W(\mathcal{J}^{l},\mathcal{J}^{l}; \bar{\boldsymbol{q}}) + \sum_{l' \neq l} W(\mathcal{J}^{l}, \mathcal{J}^{l'}; \bar{\boldsymbol{q}}) \biggr).
\end{align*}
From inequality~\eqref{eq:upper_bound_proof}, we obtain that $W(\mathcal{J}^{l},\mathcal{J}^{l}; \bar{\boldsymbol{q}}) \lesssim \max_{J \in \text{Desc}(\mathcal{J}^{l})} |\pi_{J}| \tau_{J^\uparrow}^{|\mathcal{A}_{l}| + |\mathcal{B}_{l}| - 1}$ for any $l \in [k_*]$.
Furthermore, for any $l' \neq l$, we find that
\begin{align*}
    W(\mathcal{J}^{l}, \mathcal{J}^{l'}; \bar{\boldsymbol{q}}) = \sum_{(i, j) \in \mathcal{M}(\mathcal{J}^{l}, \mathcal{J}^{l'}) \backslash \cup_{l = 1}^{k_{*}} (\mathcal{A}_{l} \cap \mathcal{J}^{l}) \times (\mathcal{B}_{l} \cap \mathcal{J}^{l'})} \bar{q}_{i(j - \bar{k})} \lesssim |\pi_{\mathcal{J}^{l}}| = |\pi_{\mathcal{J}^{l}}|.
\end{align*}
Putting the above results together, we obtain the conclusion of inequality~\eqref{eq:upper_bound_first}. Since $\widetilde{W}(G_{n},G_{n}') \leq W(\mathcal{J}_{\text{root}}, \mathcal{J}_{\text{root}}; \bar{\boldsymbol{q}})$, we reach the conclusion of claim~\eqref{eq:asymptotic_equivalence}.\qed

\section{Additional Results}
\label{sec:auxiliary}
In this appendix, we state
and prove the following result
which was deferred from the main text. 
\begin{lemma}
\label{lemma:relation_strong_identifiability}
Let $\Theta \subseteq \bbR^d$ be a compact set
with nonempty interior.
\begin{enumerate} 
\item[(a)] Let $\Delta  = 1\vee \diam(\Theta) < \infty$
and $G_0 \in \calE_{k_0}(\Theta)$. 
Then, for any $G \in \mathcal{O}_{k}(\Theta)$, we have 
$$ \mathcal{D}(G, G_{0}) \geq \frac 1 {\Delta^2} W_2^2(G, G_0).$$

\item[(b)] Assume the mixing measure
$G_0 \in \calE_{k_0}(\Theta)$ admits a support point
$\theta_0$ lying in the interior of $\Theta$.
Then, 
$$\sup_{\substack{G \in \mathcal{O}_{k}(\Theta)\\
G \neq G_0}}
\frac{\mathcal{D}(G, G_{0})}{W_{2}^2(G, G_{0})} =\infty.$$
\end{enumerate} 
\end{lemma}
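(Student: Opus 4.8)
The plan is to prove part~(a) by constructing an explicit transport plan between $G$ and $G_0$ adapted to the Voronoi partition $\{\mathcal{A}_j\}_{j=1}^{k_0}$ of the index set $[k']$, and to prove part~(b) by perturbing a single interior atom of $G_0$, exploiting the fact that $\mathcal{D}$ charges singleton cells only to the first power of the displacement while $W_2^2$ charges them to the second power.

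For part~(a), write $\bar p_j = \sum_{i \in \mathcal{A}_j} p_i$ and $r_j = \min\{\bar p_j, p_j^0\}$; after fixing a tie-breaking rule the cells $\mathcal{A}_j$ partition $[k']$, so $\sum_j \bar p_j = 1$. I would build the coupling in two stages. First, for each $j \in [k_0]$, route a total mass $r_j$ from the atoms $\{\theta_i : i \in \mathcal{A}_j\}$ into $\theta_j^0$, sending at most $p_i$ from each $\theta_i$; this is feasible since $r_j \le \bar p_j$ and $r_j \le p_j^0$, and the cost of this partial plan is at most $\sum_{j}\sum_{i \in \mathcal{A}_j} p_i \|\theta_i - \theta_j^0\|^2$. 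Splitting this sum over cells of size $>1$ and cells of size $1$, and using $\|\theta_i - \theta_j^0\| \le \diam(\Theta) \le \Delta$ on the latter, it is bounded by the first term of $\mathcal{D}(G,G_0)$ plus $\Delta$ times the second term. Second, the residual $G$-mass and the residual $G_0$-mass each total $m := 1 - \sum_j r_j$, and since $\sum_j \bar p_j = \sum_j p_j^0 = 1$ one checks that $2m = \sum_{j=1}^{k_0} |\bar p_j - p_j^0|$; coupling the residuals arbitrarily costs at most $\Delta^2 m$, i.e.\ at most $\tfrac12 \Delta^2$ times the third term of $\mathcal{D}(G,G_0)$. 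Adding the two contributions and using $1 \le \Delta \le \Delta^2$ gives $W_2^2(G,G_0) \le \Delta^2\,\mathcal{D}(G,G_0)$, which is the claim.

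For part~(b), let $\theta_{j_0}^0$ be a support atom of $G_0$ lying in the interior of $\Theta$, with weight $p_{j_0}^0 > 0$, fix a unit vector $v \in \bbR^d$, and set $\delta_0 = \min_{\ell \ne j_0} \|\theta_{j_0}^0 - \theta_\ell^0\| > 0$. For $0 < \epsilon < \delta_0/2$ small enough that $\theta_{j_0}^0 + \epsilon v \in \Theta$, let $G_\epsilon \in \mathcal{O}_{k_0}(\Theta) \subseteq \mathcal{O}_k(\Theta)$ be obtained from $G_0$ by replacing the atom $\theta_{j_0}^0$ with $\theta_{j_0}^0 + \epsilon v$ while keeping all weights; then $G_\epsilon \ne G_0$. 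A direct check shows that for such $\epsilon$ every Voronoi cell $\mathcal{A}_j(G_\epsilon)$ is a singleton, so the weight-mismatch term of $\mathcal{D}$ vanishes and $\mathcal{D}(G_\epsilon, G_0) = p_{j_0}^0 \epsilon$, whereas the transport plan that moves only the $j_0$-th atom shows $W_2^2(G_\epsilon, G_0) \le p_{j_0}^0 \epsilon^2$. Hence $\mathcal{D}(G_\epsilon, G_0)/W_2^2(G_\epsilon, G_0) \ge 1/\epsilon \to \infty$ as $\epsilon \downarrow 0$, which proves~(b).

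Neither part presents a serious obstacle; the one place demanding a little care is the bookkeeping for the two-stage coupling in part~(a)---specifically verifying the feasibility of the first stage and the identity $2m = \sum_j |\bar p_j - p_j^0|$ linking the leftover mass to the weight-mismatch term of $\mathcal{D}$---and the elementary Voronoi-cell computation underlying part~(b).
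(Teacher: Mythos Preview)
Your proposal is correct and follows essentially the same route as the paper: for part~(a) the paper invokes a coupling from \cite{heinrich2018} satisfying $\sum_{i\in\calA_j}\bar q_{ij}=p_j^0\wedge\bar p_j$, which is exactly the two-stage coupling you construct by hand, and the subsequent chain of inequalities is identical; for part~(b) the paper likewise perturbs a single interior atom of $G_0$ (via the multiplicative shift $\theta_\epsilon^0=(1+\epsilon)\theta_1^0$ rather than your additive shift $\theta_{j_0}^0+\epsilon v$) and reads off the same $\epsilon$ versus $\epsilon^2$ scaling. Your additive perturbation is marginally cleaner since it does not tacitly require $\theta_1^0\neq 0$.
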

\begin{proof}
Let $G \in \calO_k(\Theta)$
and $\calA_j = \calA_j(G)$ for all $j=1, \dots, k_0$. 
By Lemma B.2 of~\citet{heinrich2018}, there exists
a coupling $\bar\bq \in \Pi(G, G_0)$ such that
$$\sum_{i\in \calA_j} \bar q_{ij} = p_j^0 \wedge \sum_{i\in \calA_j} p_i,\quad j=1, \dots, k_0.$$
% Part (a) is trivial when $k_0=1$, as in this case,
% $$W_2^2(G, G_0)
%  = \inf_{\bq \in \Pi(G, G_0)} \sum_{i=1}^k q_{i1} \|\theta_i-\theta_1^0\|^2
%  \leq \sum_{i=1}^k p_i \|\theta_i-\theta_{j}^0\|^2
%  =\calD(G,G_0).$$
% To prove the claim when $k_0 \geq 2$, we define a
% coupling $\bar\bq \in \Pi(G, G_0)$ as follows.
% For all   $j\in [k_0]$, let $a_j = |\calA_j|$ 
% and write
% $$\calA_j = \{j_1, \dots, j_{|\calA_j|}}\}, \quad 
% \text{such that } i_{j1} \leq \dots \leq i_{ja_j}.$$
% Define
% \begin{align*} 
% \bar q_{i_{j1} j} &= p_{i_j}\wedge p_j^0 \\  
% \bar q_{i_{j\ell}j} &= \left(\pi_{0j} - \sum_{s=1}^{\ell-1} q_{i_{js} j} \right)_+,\quad i \in \calA_j \setminus\{i_j\}.
% \end{align*}
% Notice that the matrix $\bar \bq$ contains
% $k\cdot k_0$ variables, of which exactly
% $k$ are constrained by the above equation,
% thus leaving $k_0(k-1)$ unconstrained variables. Furthermore, 
% the assumption $\bar \bq \in \Pi(G,G_0)$ 
% defines a system of $k+k_0$ linear equations
% to be satisfied by the unconstrained elements of $\bar \bq$. 
% Since $k_0 \geq 2$, we have
% $k_0(k-1) > k+k_0$, implying that
% this system is underdetermined, and thus has
% infinitely-many solutions. We pick 
% the remaining elements of $\bar \bq$ based on any such
% solution.
Using the above display and the marginal constraints
in the definition of a coupling, we obtain
\begin{align} 
\nonumber 
W_2^2(G, G_0)
 &\leq \sum_{i=1}^k \sum_{j=1}^{k_0}\bar q_{ij}
\|\theta_i - \theta_j^0\|^2 \\
 \nonumber 
&\leq \sum_{j=1}^{k_0} \sum_{i\in \calA_j} \bar q_{ij}
\|\theta_i - \theta_j^0\|^2
 + \Delta^2 \sum_{j=1}^{k_0} \sum_{i\not\in \calA_j} \bar q_{ij}\\
\nonumber 
&= \sum_{j=1}^{k_0} \sum_{i\in \calA_j} \bar q_{ij}
\|\theta_i - \theta_j^0\|^2
 + \Delta^2\sum_{j=1}^{k_0}\left[ p_j^0 -  \sum_{i\in \calA_j} \bar q_{ij}\right]\\
 \label{eq:step_W2_D_bound}
 &\leq \sum_{j=1}^{k_0} \sum_{i\in \calA_j} p_i
\|\theta_i - \theta_j^0\|^2
 + \Delta^2 \sum_{j=1}^{k_0}\left| p_j^0 -  \sum_{i\in \calA_j} p_i\right| \\
\nonumber 
&\leq \sum_{j: |\calA_j| = 1} \sum_{i\in \calA_j} p_i
\|\theta_i - \theta_j^0\|^2 + 
\sum_{j:|\calA_j|\geq 2} \sum_{i\in \calA_j} p_i
\|\theta_i - \theta_j^0\|^2
 + \Delta^2 \sum_{j=1}^{k_0}\left| p_j^0 -  \sum_{i\in \calA_j} p_i\right| \\
\nonumber 
 &\leq \Delta\sum_{j: |\calA_j| = 1} \sum_{i\in \calA_j} p_i
\|\theta_i - \theta_j^0\| + 
\sum_{j:|\calA_j|\geq 2} \sum_{i\in \calA_j} p_i
\|\theta_i - \theta_j^0\|^2
 + \Delta^2 \sum_{j=1}^{k_0}\left| p_j^0 -  \sum_{i\in \calA_j} p_i\right| \\
\nonumber 
 &\leq \Delta^2\left\{ \sum_{j: |\calA_j| = 1} \sum_{i\in \calA_j} p_i
\|\theta_i - \theta_j^0\| + 
\sum_{j:|\calA_j|\geq 2} \sum_{i\in \calA_j} p_i
\|\theta_i - \theta_j^0\|^2
 +  \sum_{j=1}^{k_0}\left| p_j^0 -  \sum_{i\in \calA_j} p_i\right|\right\} \\
 & = \Delta^2 \calD(G, G_0),
 \end{align}
 since $\Delta \geq 1$. 
This proves part (a). To prove part (b), 
recall that $G_0 = \sum_{j=1}^{k_0} p_j^0 \delta_{\theta_j^0}$ admits a support
point lying in the interior of $\Theta$. Without
loss of generality, we assume this support
point is $\theta^0_1$. Therefore, 
there exists $\epsilon_0 > 0$ such
that for all $\epsilon \in (0,\epsilon_0)$, 
$\theta_\epsilon^0 := (1+\epsilon)\theta_1^0 \in \Theta$. 
Define the mixing measure
$$G_\epsilon =p_1^0 \delta_{\theta_\epsilon^0}
 + \sum_{j=2}^{k_0} p_j^0 \delta_{\theta_j^0}
 \in \calO_{k_0}(\Theta) \subseteq\calO_k(\Theta).$$
Clearly, we may also choose $\epsilon_0$
small enough such that $\theta_\epsilon^0 \in \calA_1(G_\epsilon)$
for all $\epsilon\in (0,\epsilon_0)$. Thus, $|\calA_j(G_\epsilon)|=1$
for every $j=1,\dots,k_0$. 
By equation~\eqref{eq:step_W2_D_bound}, we therefore have 
$$W_2^2(G_\epsilon, G_0) 
\leq p_1^0 \|\theta_\epsilon^0 - \theta_1^0\|^2
 = p_1^0 \epsilon^2.$$
On the other hand, using again
the fact that $|\calA_j(G_\epsilon)|=1$ for each $j=1,\dots, k_0$, we have
$$\calD(G_\epsilon, G_0)
 = p_1^0 \epsilon.$$
 We deduce that
$$\sup_{\substack{G \in \mathcal{O}_{k}(\Theta)\\
G \neq G_0}}
\frac{\mathcal{D}(G, G_{0})}{W_{2}^2(G, G_{0})} 
\geq \sup_{\epsilon \in(0, \epsilon_0)}
\frac{\mathcal{D}(G_\epsilon, G_{0})}{W_{2}^2(G_\epsilon, G_{0})}\geq \sup_{\epsilon\in(0, \epsilon_0)}
\frac 1 \epsilon = \infty,$$
as claimed.
\end{proof}

\section{Simulation Study}
\label{sec:simulations}

We perform a simulation study to illustrate
the convergence rates of the penalized MLE
given in Sections~\ref{sec:pointwise_settings}
and~\ref{sec:uniform_settings}. 
All simulations hereafter
were performed in Python 3.7 on a standard Unix machine,
and we provide further numerical details in Appendix~\ref{app:numerical}. 
All code for reproducing
our simulation study is publicly available.\footnote{\href{https://github.com/tmanole/Refined-Mixture-Rates}{https://github.com/tmanole/Refined-Mixture-Rates}} 
% Our code is publicly availabl
% All simulations  were run on standard
% Unix machines using a modification of
% the Expectation-Maximization algorithm~\citep{dempster1977EM}. 
% We provide further numerical details

We consider three models A--C, 
which respectively correspond to the settings described
in Sections~\ref{sec:pointwise_strongly_identifiable}, \ref{sec:pointwise_weakly_identifiable},
and~\ref{sec:uniform_settings}. 
In each case, we choose the kernel density $f$ to be the 
$d$-dimensional Gaussian density, 
and we generate observations from the Gaussian mixture density,
% $$f(x|\mu, \Sigma) = \frac 1 {\sqrt{\det(2\pi\Sigma)}} 
% \exp\left\{ - \frac 1 2 (x-\mu)^\top \Sigma^{-1} (x-\mu)\right\}, 
% \quad x \in \bbR^d.$$
$$p_{G_0}(x) = \sum_{j=1}^{k_0} \pi_{j}^0 \frac {\exp\left\{ - \frac 1 2 (x-\mu_j^0)^\top (\Sigma_j^0)^{-1} (x-\mu_j^0)\right\}}{\sqrt{\det(2\pi\Sigma_j^0)}},$$
where $x \in \bbR^d$.  
The models are defined as follows. 

{\bf Model A.} We treat
the scale parameters as equal and known, 
and set 
\begin{equation} 
\label{eq:equal_scales} 
\Sigma_1^0 = \dots \Sigma_{k_0}^0 = .01 I_d,
\end{equation}
with $d=2$ and $k_0 = 2$. 
The resulting location-Gaussian family of densities
is strongly identifiable~\citep{chen1995, ho2016EJS}, 
thus the result of Theorem~\ref{theorem:rate_MLE_strong_identifiability}
applies to this family. 
We set the location parameters and mixing proportions as follows,
\begin{alignat*}{3}
    \theta_1^0 = \begin{pmatrix} 0\\0\end{pmatrix},~~ 
    \theta_2^0 = \begin{pmatrix}.2\\.2\end{pmatrix},~~
    \pi_1^0 = \pi_2^0 = \frac 1 2. 
\end{alignat*}    

{\bf Model B.} We next consider a two-dimensional
Gaussian mixture model with $k_0 = 3$ components,
however we now treat both location  and scale parameters as unknown.
Define,
\begin{alignat*} {1}
&    \mu_1^0 = \begin{pmatrix} 0\\.3\end{pmatrix},~~ 
    \mu_2^0 = \begin{pmatrix}.1\\-.4 \end{pmatrix},~~
    \mu_3^0 = \begin{pmatrix}.5\\.2 \end{pmatrix},\\ 
&   \Sigma_1^0 = \begin{pmatrix}.042824 & .017324 \\ .017324 & .081759\end{pmatrix},~~
     \Sigma_2^0 = \begin{pmatrix}.0175 & -.0125 \\ -.0125 & .0175\end{pmatrix},\\
&     \Sigma_3^0 = \begin{pmatrix}0.01 & -.0125 \\ -.0125 & .0175\end{pmatrix},~~
     \pi_1^0 = \frac 1 3, \pi_2^0 = \frac 1 4, \pi_3^0 = \frac 1 3.
\end{alignat*}     
The above parameters are taken from
the simulation study
of~\citet{ho2016convergence}, 
up to rescaling. This model falls within
the setting of Theorem~\ref{theorem:rate_MLE_Gaussian}. 

{\bf Model C.} 
We again consider a location-Gaussian family as in Model A, 
but now with parameters
$G_0 \equiv G_0^n$ depending on the sample size $n$. 
We set the scale parameters as in equation~\eqref{eq:equal_scales}
with $d=1$. 
Furthermore, we consider two distinct 
submodels, depending on 
the true number of components $k_0$. 
Our definitions depend on 
the sequence $\epsilon_n = n^{- \frac 1 {4k_0-6}}$.%/( {4k_0-6})}$.
\begin{itemize} 
\item When $k_0=3$, we set
\begin{alignat*}{1} 
\mu_{1,n}^0 = 0, ~~ 
\mu_{2,n}^0 = .2 + \epsilon_n, ~~
\mu_{3,n}^0 = .2 + 4\epsilon_n.
\end{alignat*}
\item When $k_0 = 4$, we retain the above parameters
and additionally define
\begin{alignat*}{1}
\mu_{4,n}^0 = .2 - 1.5\epsilon_n.
\end{alignat*}
\end{itemize}
In both cases, the mixing proportions
are chosen such that the resulting mixtures are balanced.
These models correspond to the setting described
in Section~\ref{sec:uniform_settings}, relative to the 
limiting mixing measure
$$G_* = \frac 1 2 \delta_0 + \frac 1 2\delta_{.2},
\quad k_* = 2.$$
\begin{figure}[t]
\centering
\begin{tabular}{ccc}
\includegraphics[width=0.3\textwidth]{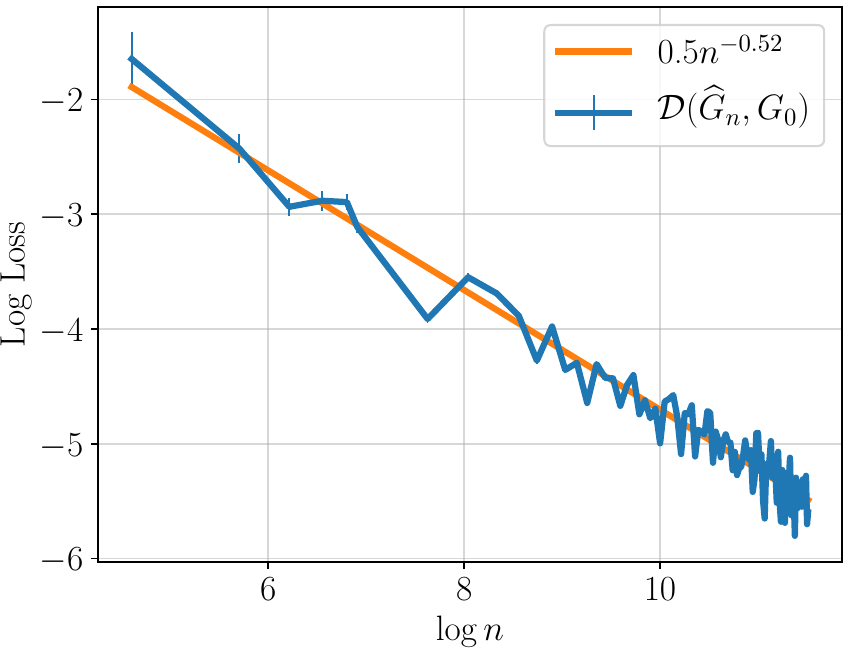}     &
\includegraphics[width=0.3\textwidth]{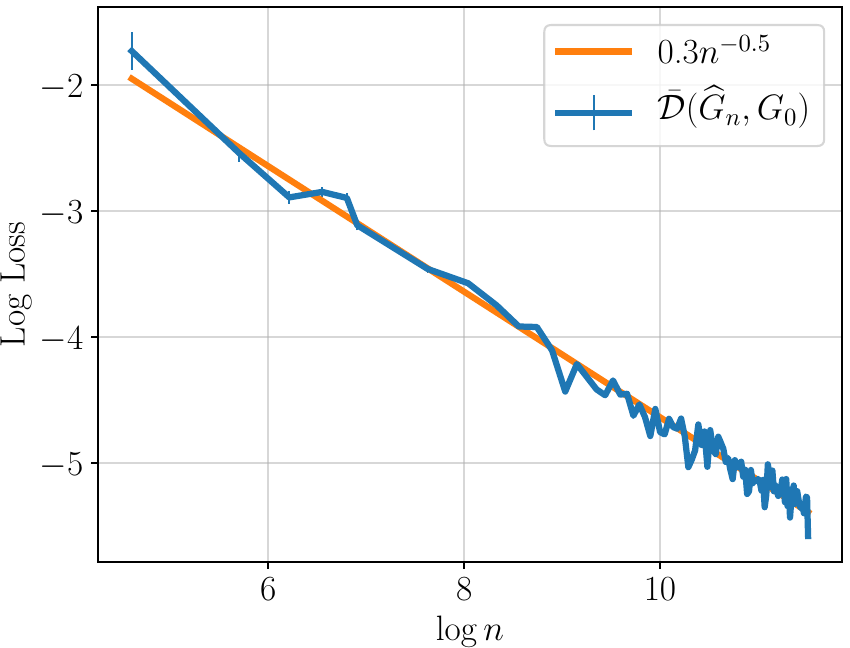} & 
\includegraphics[width=0.3\textwidth]{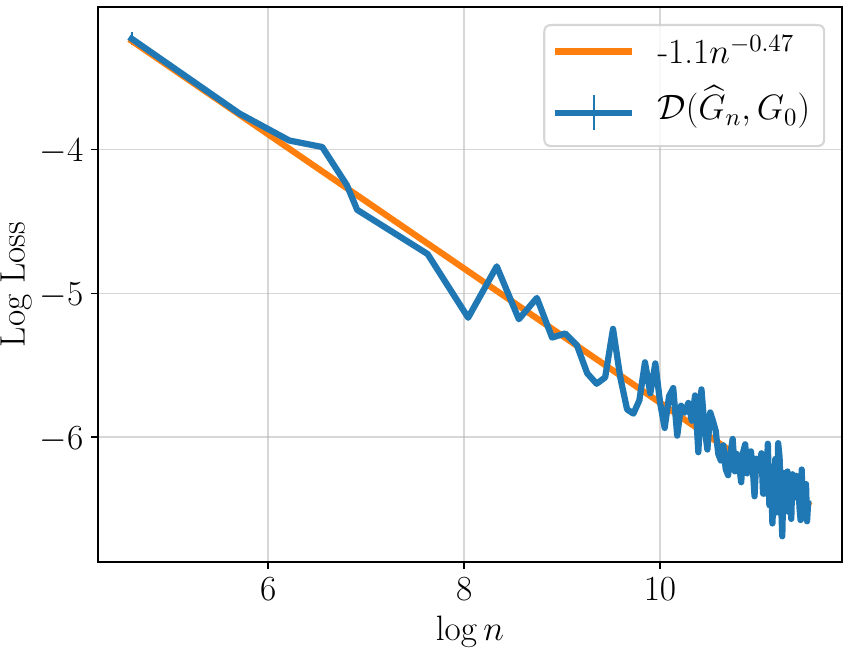} \\
(a) Model A, $k=3$ & (c) Model B, $k=4$ & (e) Model C, $k=k_0=3$\\[0.1in]
\includegraphics[width=0.3\textwidth]{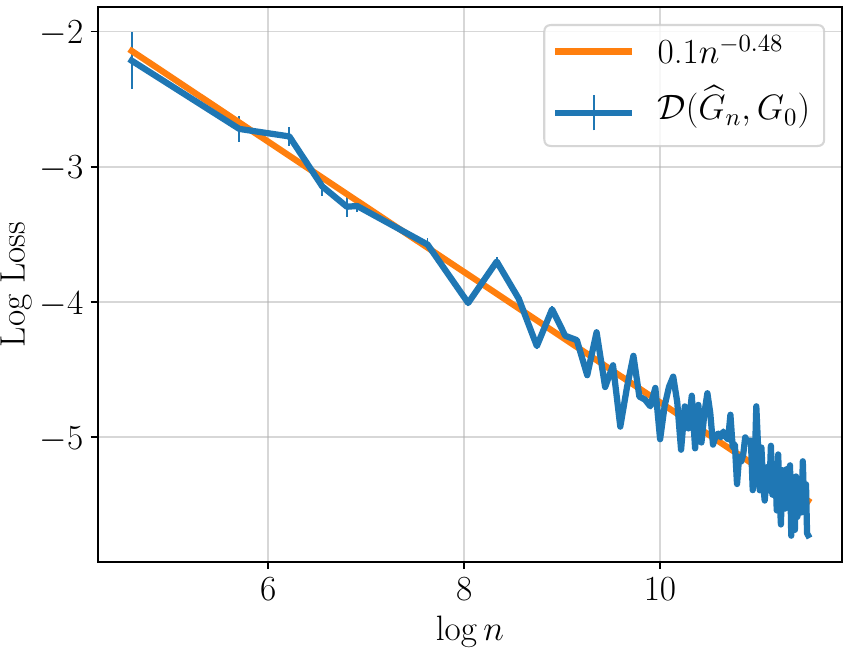}     &
\includegraphics[width=0.3\textwidth]{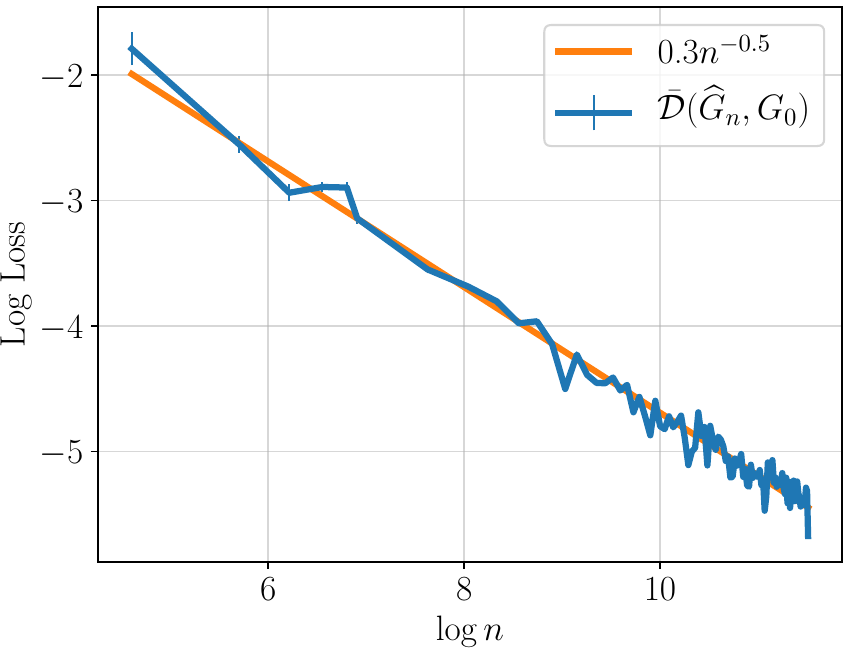} &
\includegraphics[width=0.3\textwidth]{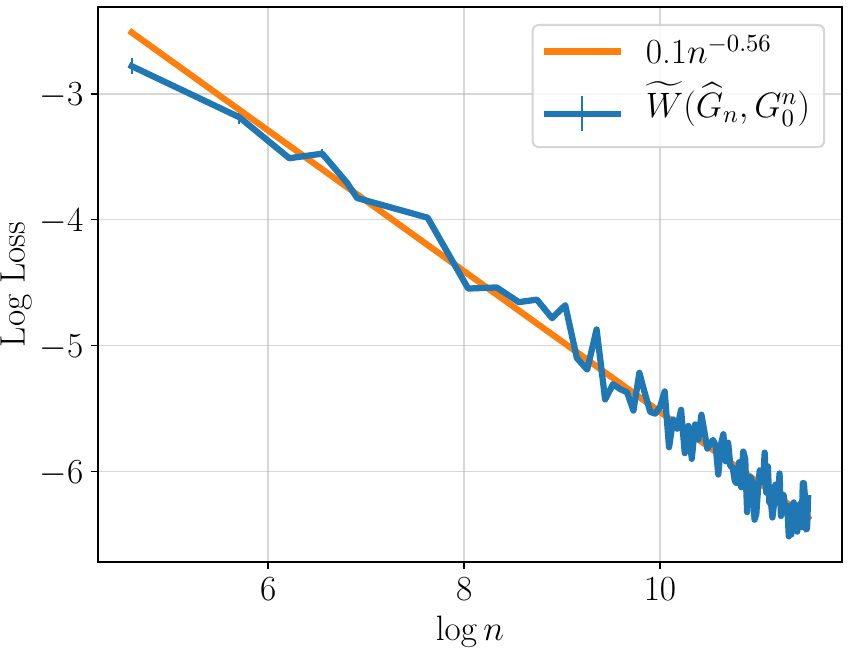}\\ 
(b) Model A, $k=4$  & (d) Model B, $k=5$ & (f) Model C, $k=k_0=4$
    \end{tabular}
    \caption{Log-log scale plots for the simulation results under Models A--C.
    For each model and sample size $n$, we compute the estimator $\hat G_n$ 
    on 20 independent samples of size $n$. Its average discrepancy from the true
    mixing measure
    is plotted in blue, 
    with error bars representing two empirical standard deviations. We additionally plot, in orange, the fitted linear regression line
    of these points, obtained using
    the method of least squares.} 
    \label{fig:sim}
\end{figure} 
For each model, we generate
20 samples of size $n$, for 100 different choices of $n$ 
between $10^2$ and $10^5$. 
For each sample, we compute the penalized MLE $\hat G_n$
with respect to the tuning parameter $\xi_n = \log n$,
and with respect to a number of components
$k$. For the fixed Models A--B, we choose $k \in \{ k_0 + 1, k_0 + 2\}$,
whereas for the varying Model~C, we choose $k =k_0 \in \{k^*+1, k^*+2\}$. 

We report in Figure~\ref{fig:sim}
the average discrepancy between $\hat G_n$
and $G_0$ for each model and choice of $k$. 
The discrepancies are respectively taken to be
$\calD, \widebar\calD$ and $\widetilde W$ for Models A--C. 
In each case, it can be seen that the average discrepancy
from $\hat G_n$ to $G_0$ decays approximately
at the rate $n^{-1/2}$, as was anticipated
by Theorems~\ref{theorem:rate_MLE_strong_identifiability}, 
~\ref{theorem:rate_MLE_Gaussian}
and~\ref{theorem:uniform_rates_strong_identifiable}. 

While these empirical  convergence rates 
are similar across the three models, 
they imply vastly different  convergence behaviors for the 
individual fitted parameters. For example, Figure~\ref{fig:sim}(a)
implies that $\hat G_n$ has exactly two 
location parameters $\hmu_j^n$
which converge 
to one of their population counterparts at the approximate rate $\alpha_n = n^{-1/4}$, and 
a third location parameter
converging at the faster rate $\beta_n = n^{-1/2}$. 
% Under Figure 2(c), the same conclusion is true but with $\alpha_n$
% replaced by $n^{-1/8}$.
Under Figure~\ref{fig:sim}(e), 
a similar conclusion holds true, but 
now
two possibilities arise: either $\alpha_n = n^{-1/6}$ and $\beta_n = n^{-1/2}$, or $\alpha_n = \beta_n = n^{-1/4}$. 
In contrast, past
literature on mixture models
only implies that the worst of these rates (i.e. $n^{-1/4}$ for Model A 
and $n^{-1/6}$ for Model C) hold for {\it all three} fitted parameters.
The main contribution of our work was to show
that such results are overly pessimistic, and that the 
fitted parameters of finite mixture models typically
enjoy heterogeneous rates of convergence. 
In particular, a subset of the estimated parameters in finite mixture
models may converge as fast as the parametric rate.

\subsection{Numerical Specifications}
\label{app:numerical}
% In this Appendix, we provide additional details regarding the simulation study
% reported in Section~\ref{sec:simulations}. 

%We now provide additional numerical details. 
We implement the penalized MLE $\hat G_n$
using Algorithm~\ref{alg:em},
which
is a slight modification of the 
EM algorithm~\citet{dempster1977EM}
accounting for the penalty on the mixing proportions. 
This algorithm was previously discussed, 
for instance, 
by~\citet{chen2008a, manole2021b}, and only differs from
the traditional EM algorithm for Gaussian mixture models
through the update on line~6. 
We used Algorithm~\ref{alg:em} 
as written for Model~B, 
whereas for Models~A and~C, we omitted the update
on line 8 for the scale parameters, and simply
held them fixed to their true values. 
\begin{algorithm}[H]
\SetAlgoNoLine
\init{Starting values $\Psi^{(0)} = (\theta_1^{(0)},
\dots, \theta_k^{(0)}, 
\Sigma_1^{(0)}, \dots, \Sigma_k^{(0)}, \pi_1^{(0)}, \dots, \pi_k^{(0)})$;
i.i.d. sample $X_1, \dots, X_n$;
tuning parameter $\xi_n = \log n$;
maximum number of iterations $T > 0$;
convergence criterion $\epsilon > 0$.}

\Repeat{$\left\|\Psi^{(t)} - \Psi^{(t-1)}\right\| \leq \epsilon ~\mathrm{ or } ~t \geq T$.}{
\kwEstep{}{Compute
$w_{ij}^{(t+1)} \leftarrow
	\frac{\pi_j^{(t)} \log  f(X_i; \theta_j^{(t)} , \Sigma_j^{(t)})}
	     {\sum_{l=1}^k \pi_l^{(t)} \log  f(X_i; \theta_l^{(t)},\Sigma_l^{(t)}) },
  ~~ i=1, \dots, n; \ j = 1, \dots, k.$
}{}
\kwMstep{}{
For $j = 1, \dots, k$, 

  $
\pi_j^{(t+1)} \leftarrow  \frac{ \sum_{i=1}^n w_{ij}^{(t)} + \xi_n}{n + k\xi_n}, $
  
  $\mu_j^{(t+1)} \leftarrow \sum_{i=1}^n w_{ij}^{(t)} X_i / \sum_{i=1}^n w_{ij}^{(t)},$
  
  $\Sigma_j^{(t+1)} \leftarrow 
  \sum_{i=1}^n w_{ij}^{(t)} (X_i - \mu_j^{(t)})(X_i - \mu_j^{(t)})^\top / \sum_{i=1}^n w_{ij}^{(t)},$
   
  $\Psi^{(t+1)} \leftarrow (\theta_1^{(t)},
\dots, \theta_k^{(t)}, \Sigma_1^{(t)}, \dots\Sigma_k^{(t)}, \pi_1^{(t)}, \dots, \pi_k^{(t)}),$

 $t \leftarrow t+1.$
}{}
}\output{$\Psi^{(t)}$.}
\caption{Modified EM Algorithm.
\label{alg:em}}
\end{algorithm}

We chose the convergence criteria
$\epsilon = 10^{-8}$ and $T = 2,000$.  
Since our aim is to illustrate
theoretical properties of the estimator
$\hat G_n$, 
we initialized the EM algorithm favourably.
In particular, for any given $k$ and $k_0$, 
and for each replication, we randomly
partitioned the set $\{1,\dots,k\}$
into $k_0$ index sets $I_1, \dots, I_{k_0}$, each containing at least
one point. We then sampled $\theta_j^{(0)}$
(resp. $\Sigma_j^{(0)}$) 
from a Gaussian distribution 
with vanishing covariance, centered
at $\theta_{\ell}^0$ 
(resp. $\Sigma_{\ell}^0$), where 
$\ell$ is the unique index 
such that $j \in I_\ell$.

\end{appendix}

\end{document}